\newtheorem{thm}{Theorem}[section]
\newtheorem*{mainthm}{Main Theorem}
\newtheorem{lem}[thm]{Lemma}
\newtheorem{cor}[thm]{Corollary}
\newtheorem*{thm7.3}{Theorem \ref{thm.MillikenSWP}}
\newtheorem{fact}[thm]{Fact}
\theoremstyle{remark}
\newtheorem{rem}[thm]{Remark}
\theoremstyle{definition}
\newtheorem{defn}[thm]{Definition}
\newtheorem{notation}[thm]{Notation}
\newtheorem{assumption}[thm]{Assumption}
\newtheorem{question}[thm]{Question}
\theoremstyle{remark}
\newcommand{\al}{\alpha}
\newcommand{\om}{\omega}
\newcommand{\sse}{\subseteq}
\newcommand{\contains}{\supseteq}
\newcommand{\forces}{\Vdash}
\DeclareMathOperator{\ran}{ran}
\DeclareMathOperator{\depth}{depth}
\DeclareMathOperator{\Ext}{Ext}
\newcommand{\re}{\restriction}
\newcommand{\bP}{\mathbb{P}}
\newcommand{\bN}{\mathbb{N}}
\newcommand{\bR}{\mathbb{R}}
\newcommand{\bS}{\mathbb{S}}
\newcommand{\E}{\mathrm{E}}
\newcommand{\ssim}{\stackrel{s}{\sim}}
\newcommand{\ra}{\rightarrow}
\newcommand{\lgl}{\langle}
\newcommand{\rgl}{\rangle}
\newcommand{\rl}{\upharpoonleft}
\newcommand{\Erdos}{Erd{\H{o}}s}
\newcommand{\Fraisse}{Fra{\"{i}}ss{\'{e}}}
\newcommand{\Hubicka}{Hubi{\v{c}}ka}
\newcommand{\Lauchli}{L{\"{a}}uchli}
\newcommand{\Nesetril}{Ne{\v{s}}et{\v{r}}il}
\newcommand{\noprint}[1]{\relax}
\title[Borel sets of  Rado graphs and  Ramsey's Theorem]{Borel sets of  Rado graphs and  Ramsey's Theorem}
\author{Natasha Dobrinen}
\address{Department of Mathematics\\
 University of Denver \\
C.M.\ Knudson Hall, Room 300\\
2390 S.\ York St.\\ Denver, CO \ 80208 U.S.A.}
\email{natasha.dobrinen@du.edu}
  \urladdr{\url{http://web.cs.du.edu/~ndobrine}}
\thanks{This research was supported by  National Science Foundation Grants DMS-1600781 and DMS-1901753}
\subjclass[2010]{05D10, 05C55,  05C15, 05C05,  03C15, 03E75}
\begin{document}

\begin{abstract}
The well-known Galvin-Prikry Theorem  \cite{Galvin/Prikry73} states that Borel subsets of the Baire space are Ramsey: Given any  Borel subset $\mathcal{X}\sse [\om]^{\om}$, where  $[\om]^{\om}$ is endowed with the metric topology,
 each infinite subset $X\sse \om$ contains an infinite subset $Y\sse X$ such that  $[Y]^{\om}$ is either contained in $\mathcal{X}$   or disjoint from $\mathcal{X}$.
  Kechris, Pestov, and Todorcevic    point out  in \cite{Kechris/Pestov/Todorcevic05}
  the dearth of similar results for homogeneous structures.
  Such results are  a necessary step to the larger goal of
finding a correspondence between structures with infinite dimensional Ramsey properties and topological dynamics,
extending  their correspondence between the Ramsey property  and extreme amenability.
In this article, we prove an  analogue of the Galvin-Prikry theorem for
the Rado graph.
Any such infinite dimensional Ramsey theorem is
subject to
constraints  following from work in \cite{Laflamme/Sauer/Vuksanovic06}.
The proof uses
techniques developed for the author's  work on the Ramsey  theory  of the Henson graphs (\cite{DobrinenJML20} and \cite{DobrinenH_k19})
as well as some new methods for  fusion sequences,
used to bypass  the lack of
a certain amalgamation  property enjoyed by the Baire space.
\end{abstract}
\maketitle


\section{Introduction}\label{section.intro}

Ramsey theory
was initiated  by
 the following celebrated result.

\begin{thm}[Infinite Ramsey Theorem, \cite{Ramsey30}]\label{thm.RamseyInfinite}
Given positive integers $m$ and $\ell$,
suppose the collection of  all $m$-element subsets of $\om$
is partitioned into $\ell$ pieces.
Then there is an infinite  subset  $N\sse\om$
such that all $m$-element subsets of $N$  are contained in the same piece of the partition.
\end{thm}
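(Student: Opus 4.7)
The plan is to prove the theorem by induction on $m$, after first reducing to the case $\ell=2$. The reduction is routine: given a partition of $[\om]^m$ into $\ell$ pieces $P_1,\dots,P_\ell$, one groups $P_1$ against $P_2\cup\cdots\cup P_\ell$, applies the two-color version to obtain an infinite $N_1$ for which either $[N_1]^m\sse P_1$ (and we are done) or $[N_1]^m\sse P_2\cup\cdots\cup P_\ell$; in the latter case, iterate on $N_1$ with one fewer piece. After at most $\ell-1$ iterations, some piece captures all $m$-subsets of the resulting infinite set.

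The base case $m=1$ of the induction on dimension is precisely the pigeonhole principle: partitioning $\om$ into two pieces leaves at least one piece infinite. For the inductive step from $m$ to $m+1$ with a coloring $c\colon[\om]^{m+1}\ra\{0,1\}$, I would build a descending sequence of infinite sets $\om=M_0\contains M_1\contains\cdots$ and elements $n_0<n_1<\cdots$ as follows. At stage $i$, set $n_i=\min M_i$ and define an auxiliary coloring $c_i\colon[M_i\setminus\{n_i\}]^m\ra\{0,1\}$ by $c_i(A)=c(\{n_i\}\cup A)$; by the induction hypothesis there is an infinite $M_{i+1}\sse M_i\setminus\{n_i\}$ monochromatic for $c_i$ in some color $\epsilon_i\in\{0,1\}$. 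Since $(\epsilon_i)_{i<\om}$ takes only two values, pigeonhole yields an infinite $I\sse\om$ on which $\epsilon_i$ is constantly some $\epsilon$; take $N=\{n_i:i\in I\}$.

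To verify $N$ works, fix any $i_0<\cdots<i_m$ in $I$. By the nesting, $n_{i_1},\dots,n_{i_m}\in M_{i_0+1}\sse M_{i_0}\setminus\{n_{i_0}\}$, so
$$c(\{n_{i_0},\dots,n_{i_m}\})=c_{i_0}(\{n_{i_1},\dots,n_{i_m}\})=\epsilon_{i_0}=\epsilon.$$
The only real obstacle is organizing the recursion cleanly so that the nesting $M_{i+1}\sse M_i$ and the containment $n_{i_1},\dots,n_{i_m}\in M_{i_0+1}$ are maintained through the construction; once this bookkeeping is in place, the induction hypothesis does all the work. Notably, no infinite-dimensional or descriptive set-theoretic machinery is required at this level, in sharp contrast to the Galvin--Prikry-style extensions toward which the paper is building.
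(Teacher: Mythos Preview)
Your argument is correct and is the standard inductive proof of the Infinite Ramsey Theorem. Note that the paper does not supply its own proof of this statement: it is quoted in the Introduction as a classical result, with a citation to Ramsey's original 1930 paper, so there is no in-paper proof to compare against.
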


In the arrow notation, this is written as follows:
\begin{equation}
\forall m,j\ge 1,\ \ \om\ra (\om)^m_{\ell}.
\end{equation}

One may ask whether analogues of this theorem exist when, instead of $m$-sized sets, one wants to partition  the  infinite sets of natural numbers into finitely many pieces.
Using standard set-theoretic notation,
$\om$ denotes the set of natural numbers $\{0,1,2,\dots\}$,
$[\om]^{\om}$ denotes the set of all infinite subsets of $\om$,
and given $X\in[\om]^{\om}$,
 the collection of infinite subsets of $X$ is denoted by $[X]^{\om}$.
\Erdos\ and Rado \cite{Erdos/Rado52}
showed that there is a partition of $[\om]^{\om}$ into two sets such that for each $X\in[\om]^{\om}$, the set $[X]^{\om}$ intersects both pieces of the partition.
However, this example is highly non-constructive, using the Axiom of choice to generate the partition,
and  Dana Scott suggested that all sufficiently definable sets might satisfy an infinite dimensional Ramsey analogue.
This was proven   to be the case, as we now review.

We hold to  the convention that sets of natural numbers are enumerated in increasing order, and we write $s\sqsubset X$  exactly when  $s$ is an initial segment of $X$.
The collection of finite subsets of  natural numbers  is denoted by
 $[\om]^{<\om}$.
The {\em Baire space} is  the set $[\om]^{\om}$ with the topology generated by basic open sets of the form $\{X\in [\om]^{\om}:s\sqsubset X\}$, for  $s\in[\om]^{<\om}$.
We call this the {\em metric topology} since it is  the topology generated by  the metric defined  as follows:  For distinct  $X,Y\in [\om]^{\om}$,
$\rho(X,Y)= 2^{-n}$, where $n$ is maximal such that
$X$  and $Y$
have the same
 initial segment of
 of cardinality $n$.
A subset $\mathcal{X}\sse[\om]^{\om}$ is called  {\em Ramsey} if  there is an $X\in[\om]^{\om}$ such that
either $[X]^{\om}\sse \mathcal{X}$ or else
$[X]^{\om}\cap \mathcal{X}=\emptyset$.

The first  achievement in
  the line of infinite dimensional Ramsey theory  is the result of  Nash-Williams in \cite{NashWilliams65} showing that clopen subsets  of the Baire space are Ramsey.
  Three years later,  Galvin stated in \cite{Galvin68}  that this generalizes to all open sets in the Baire space.
Soon after, the following significant  result  was proved by Galvin and Prikry.
In order to present their result, first a bit of terminology is introduced.
Given a finite set $s\in[\om]^{<\om}$ and an infinite set $X\in[\om]^{\om}$, let
\begin{equation}\label{eq.EllentuckOpen}
[s,X]=\{Y\in [X]^{\om}:s\sqsubset Y\}.
\end{equation}
A subset $\mathcal{X}\sse [\om]^{\om}$ is  called {\em completely Ramsey}
if for each finite $s$ and infinite $X$ with $s\sqsubset X$,
there is a $Y\in [s,X]$ such that either $[s,Y]\sse\mathcal{X}$ or else $[s,Y]\cap\mathcal{X}=\emptyset$.

\begin{thm}[Galvin and Prikry, \cite{Galvin/Prikry73}]\label{thm.GP}
Every Borel subset of the Baire space is completely Ramsey.
\end{thm}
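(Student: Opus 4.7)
The plan is to follow the classical route: prove that the collection of completely Ramsey subsets of $[\om]^{\om}$ forms a $\sigma$-algebra that contains all open sets. Since the Borel sets are by definition the smallest such $\sigma$-algebra, this is enough.

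First I would set up the accept/reject/decide machinery relative to a fixed target $\cal{X}\sse[\om]^{\om}$. For $s\in[\om]^{<\om}$ and $X\in[\om]^{\om}$ with $s\sqsubset X$, say that $X$ \emph{accepts} $s$ if $[s,X]\sse\cal{X}$; \emph{rejects} $s$ if no $Y\in[s,X]$ accepts $s$; and \emph{decides} $s$ if it does one of these. Three combinatorial facts would be established: (i) every pair $(s,X)$ admits some $Y\in[s,X]$ that decides $s$, by a direct one-point-at-a-time construction; (ii) decisions are hereditary under passing to further infinite subsets; (iii) given any $X$, a diagonal fusion produces $Y\in[X]^{\om}$ that decides every $s\sqsubset Y$.

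I would then show that open $\cal{X}$ are completely Ramsey. If $Y$ decides every initial segment of itself and yet rejects some $s$ while $[s,Y]\cap\cal{X}\ne\emptyset$, then by openness any witness $Z$ in the intersection has an initial segment $t$ with $[t,Z]\sse\cal{X}$, forcing acceptance of $t$. A Nash-Williams style barrier argument applied to the family of minimal accepting segments of $Y$ then yields $Y'\in[s,Y]$ with either $[s,Y']\sse\cal{X}$ or $[s,Y']\cap\cal{X}=\emptyset$.

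The remaining task is to verify the $\sigma$-algebra closure properties. Closure under complementation is immediate from the symmetry of accept and reject. Closure under countable unions is the real obstacle and is the source of the fusion arguments alluded to in the abstract. Given $\cal{X}=\bigcup_{n<\om}\cal{X}_n$ with each $\cal{X}_n$ completely Ramsey and a basic open $[s,X]$, I would build a fusion sequence $X\contains X_0\contains X_1\contains\cdots$ in which $X_n$ is homogeneous for $\cal{X}_n$ on all relevant extensions of $s$ and $X_n, X_{n+1}$ agree on a stem of length $n$. The fusion limit $Y$ is then simultaneously homogeneous for every $\cal{X}_n$ on $[s,Y]$: if some $\cal{X}_n$ contains $[s,Y]$ we are done; otherwise a second fusion extracts $Y'\in[s,Y]$ whose $[s,Y']$ is disjoint from every $\cal{X}_n$ and hence from $\cal{X}$. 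The delicate bookkeeping --- interleaving countably many homogenization steps with stem-preservation so that the diagonal intersection is infinite and lies inside each stage --- is precisely the fusion technique this paper later adapts to the Rado graph setting.
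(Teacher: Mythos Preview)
The paper does not prove Theorem~\ref{thm.GP}; it is quoted as a classical result of Galvin and Prikry and simply cited from \cite{Galvin/Prikry73}. So there is no ``paper's own proof'' to compare against. Your outline is the standard Galvin--Prikry argument via combinatorial forcing (accept/reject/decide plus fusion), and as a sketch it is correct.

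A comparison that may be useful: when the paper proves its Rado-graph analogue (Theorem~\ref{thm.best}), it deliberately does \emph{not} use the accept/reject machinery you describe. That machinery relies on the amalgamation property encapsulated in Todorcevic's Axiom~\textbf{A.3}(b), which fails for the space $\mathcal{T}_{\bR}$. Instead the paper handles open sets by defining a well-ordering $\prec$ on $\mathcal{AT}$, assigning a rank to Nash--Williams families, and inducting on that rank (Lemma~\ref{lem.Galvin}), with the base case supplied by the Halpern--L\"auchli-style Theorem~\ref{thm.matrixHL}. The closure under countable unions (Lemma~\ref{lem.ctblU}) is closer to your sketch, but goes through an intermediate reduction: first pass to a subtree on which each $\mathcal{X}_n$ is relatively metrically open (Lemmas~\ref{lem.GP8} and~\ref{lem.GP9}), then apply the open case to the union. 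Your direct ``homogenize-for-each-$\mathcal{X}_n$-then-case-split'' fusion works in the Baire space precisely because of the amalgamation you have there; the paper's detour through relative openness is what survives when that amalgamation is absent.
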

It follows that   Borel sets are   Ramsey.
This weaker  statement   is written as
\begin{equation}
 \om{\xrightarrow{\mathrm{Borel}}}(\om)^{\om}.
\end{equation}

Shortly after this, Silver proved in \cite{Silver70} that analytic subsets of the Baire space are completely Ramsey.
The apex of results on  infinite dimensional Ramsey theory of the Baire space was attained by Ellentuck in \cite{Ellentuck74}.
He used the idea behind completely Ramsey sets to introduce a topology refining the metric topology on the Baire space.
In current terminology,  the topology generated by the basic open sets
 of the form $[s,X]$ in equation (\ref{eq.EllentuckOpen})
  is  called the {\em Ellentuck topology}.
Ellentuck   used this topology to  precisely characterize
those subsets of $[\om]^{\om}$ which are completely Ramsey.

\begin{thm}[Ellentuck, \cite{Ellentuck74}]\label{thm.Ellentuck}
A subset  $\mathcal{X}$ of $[\om]^{\om}$ is completely Ramsey if and only if $\mathcal{X}$ has the property of Baire in the Ellentuck topology.
\end{thm}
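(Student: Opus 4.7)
The plan is to prove the biconditional by first installing the standard companion notion. Call $\mathcal{Y}\sse[\om]^{\om}$ \emph{Ramsey null} if for every basic Ellentuck neighborhood $[s,X]$ there is some $Y\in[s,X]$ with $[s,Y]\cap\mathcal{Y}=\emptyset$. The strategy rests on three preliminary facts to be established: (a) every Ellentuck-open set is completely Ramsey; (b) a countable union of Ramsey null sets is Ramsey null, so the Ramsey null sets form a $\sigma$-ideal; (c) a set is Ellentuck-nowhere-dense if and only if its Ellentuck-closure is Ramsey null, so by (b) every Ellentuck-meager set is Ramsey null.

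For the forward direction, assume $\mathcal{X}$ is completely Ramsey. Let $\mathcal{U}$ be the union of all basic Ellentuck neighborhoods $[s,Y]$ with $[s,Y]\sse\mathcal{X}$; this is Ellentuck-open and contained in $\mathcal{X}$. Given any $[s,X]$, the complete Ramsey property yields $Y\in[s,X]$ such that either $[s,Y]\sse\mathcal{X}$, in which case $[s,Y]\sse\mathcal{U}$ and so $[s,Y]\cap(\mathcal{X}\setminus\mathcal{U})=\emptyset$, or $[s,Y]\cap\mathcal{X}=\emptyset$, which also witnesses that $[s,Y]$ is disjoint from $\mathcal{X}\setminus\mathcal{U}$. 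Hence $\mathcal{X}\setminus\mathcal{U}$ is Ramsey null, and $\mathcal{X}$ differs from the Ellentuck-open set $\mathcal{U}$ by an Ellentuck-meager set, so $\mathcal{X}$ has the Baire property.

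For the converse, suppose $\mathcal{X}$ has the Baire property in the Ellentuck topology and write $\mathcal{X}=\mathcal{U}\triangle\mathcal{M}$ with $\mathcal{U}$ Ellentuck-open and $\mathcal{M}$ Ellentuck-meager. By (a), $\mathcal{U}$ is completely Ramsey; by (c), $\mathcal{M}$ is Ramsey null. Given $[s,X]$, first apply (a) to choose $Y_1\in[s,X]$ that decides $\mathcal{U}$ on $[s,Y_1]$, then apply the Ramsey null property of $\mathcal{M}$ inside $[s,Y_1]$ to get $Y_2\in[s,Y_1]$ with $[s,Y_2]\cap\mathcal{M}=\emptyset$. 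On $[s,Y_2]$ the sets $\mathcal{X}$ and $\mathcal{U}$ coincide, so $[s,Y_2]$ is either contained in or disjoint from $\mathcal{X}$, as required.

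The substantive work, and what I expect to be the main obstacle, is the combinatorial verification of (a) and (b), both via fusion arguments inside $[s,X]$. For (b), given Ramsey null sets $\{\mathcal{Y}_n:n<\om\}$, one builds a fusion sequence $X\contains X_0\contains X_1\contains\cdots$ in which each $X_{n+1}$ has been trimmed, below every finite extension $t\sqsupset s$ with $|t|\le n+|s|$, so as to avoid $\mathcal{Y}_n$ inside $[t,X_n]$; a suitable diagonal $Y$ (retaining the first $n$ elements of $X_n$ beyond $s$) then satisfies $[s,Y]\cap\bigcup_n\mathcal{Y}_n=\emptyset$. For (a), given an Ellentuck-open $\mathcal{U}$ and $[s,X]$, the analogous fusion either allows one to thin at every stage into $\mathcal{U}$, producing a $Y$ with $[s,Y]\sse\mathcal{U}$, or else encounters an obstruction along some extension $t\sqsupset s$ that forces $[s,Y]\cap\mathcal{U}=\emptyset$ on a further refinement. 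The delicate point is the bookkeeping: one must arrange the fusion so that in the positive case the diagonal element has every infinite extension landing in the open set $\mathcal{U}$, which is exactly the iterated Galvin--Prikry thin-out principle adapted to Ellentuck neighborhoods.
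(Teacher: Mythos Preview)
The paper does not prove this theorem; it is quoted in the introduction as a classical result with a citation to Ellentuck's 1974 paper, and no argument is supplied. So there is no ``paper's own proof'' to compare against.

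That said, your outline is the standard Ellentuck argument and is correct in structure. A couple of remarks. First, fact (c) actually follows cheaply from (a): if $\mathcal{Y}$ is nowhere dense then the complement of $\overline{\mathcal{Y}}$ is dense open, and applying (a) to this dense open set inside any $[s,X]$ yields some $Y\in[s,X]$ with either $[s,Y]$ contained in the complement of $\overline{\mathcal{Y}}$ or $[s,Y]\sse\overline{\mathcal{Y}}$; the latter is impossible since $\overline{\mathcal{Y}}$ has empty interior and $[s,Y]$ is a nonempty open set. So the substantive combinatorics really lives in (a) and (b), as you say.

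Second, your sketch of (a) is the one place that is vague. The classical route is not a straight fusion but Galvin's combinatorial forcing: one introduces the notions ``$s$ accepts $X$'' (meaning $[s,X]\sse\mathcal{U}$), ``$s$ rejects $X$'' (no $Y\in[s,X]$ is accepted), proves that every $[s,X]$ can be refined so that $s$ either accepts or rejects, and then shows by a further fusion that if $s$ rejects some $Y$ then one can thin to $Y'\in[s,Y]$ with $[s,Y']\cap\mathcal{U}=\emptyset$. Your description of ``encountering an obstruction along some extension $t\sqsupset s$'' gestures at this but does not quite capture the accept/reject dichotomy that makes the argument go through; as written it is not clear how the negative case produces a full $[s,Y]$ disjoint from $\mathcal{U}$ rather than just some $[t,Y]$ with $t\sqsupset s$. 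If you flesh that out with the standard accept/reject machinery, the proof is complete.
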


\begin{rem}
The notion of a {\em completely Ramsey}  subset  of the Baire space  defined above
  is due to Galvin and Prikry  and
   was used by Silver in \cite{Silver70}.
The definition of completely Ramsey used in \cite{Galvin/Prikry73} is actually slightly stronger, but we use the  form  defined above, as it is the most widely known and  provides the best analogy  for our results.
\end{rem}

Expanding now to the setting of  structures,
given a structure $\mathbb{B}$ and a substructure $\mathbb{A}$ of $\mathbb{B}$,
let ${\mathbb{B}\choose\mathbb{A}}$ denote the set of all copies of $\mathbb{A}$ in $\mathbb{B}$.
A \Fraisse\ class $\mathcal{K}$ has the {\em Ramsey property}
if   for any $\mathbb{A},\mathbb{B}\in\mathcal{K}$ with
  $\mathbb{A}$ embedding into $\mathbb{B}$,
for any  $\ell\in\om$, there is some $\mathbb{C}\in\mathcal{K}$ such that  $\mathbb{B}$ embeds into $\mathbb{C}$ and for any coloring of
  ${\mathbb{C}\choose\mathbb{A}}$
   into $\ell$ colors,
  there is some  $\mathbb{B}'\in
   {\mathbb{C}\choose\mathbb{B}}$
  all members of
   ${\mathbb{B}'\choose\mathbb{A}}$
  have the same color.
In \cite{Kechris/Pestov/Todorcevic05}, Kechris, Pestov, and Todorcevic proved  a beautiful  correspondence between
 the Ramsey property and topological dynamics:
 The group   of automorphisms of the \Fraisse\ limit
 $\mathbb{K}$ (also called a {\em \Fraisse\ structure}) of a \Fraisse\ order class $\mathcal{K}$ is
 extremely amenable
 if and only if $\mathcal{K}$ has
  the {\em Ramsey property}  (Theorem 4.7).
In Problem 11.2, they ask  for  the topological dynamics analogue
 of a    corresponding   infinite  Ramsey-theoretic   result for several \Fraisse\ structures, in particular,
  the rationals, the Rado graph, and  the Henson graphs.
By an infinite Ramsey-theoretic result,
they mean
 a result of the form
\begin{equation}\label{eq.Prob11.2}
\forall \ell\in\om,\ \
\mathbb{K}\ra_* (\mathbb{K})^{\mathbb{K}}_{\ell,t}
\end{equation}
where  equation (\ref{eq.Prob11.2})
reads:
``For each   $\ell\in \om$ and each partition of
 ${\mathbb{K}\choose\mathbb{K}}$ into $\ell$ many  definable   subsets,
there is a $\mathbb{J}\in {\mathbb{K}\choose\mathbb{K}}$
such that ${\mathbb{J}\choose\mathbb{K}}$ is  contained in
no more than $t$ of the pieces of the partition.''
Here,
one  assumes a  natural topology on  ${\mathbb{K}\choose\mathbb{K}}$ and
{\em definable} refers to
any reasonable class of sets definable relative to  the topology, for instance,
 open, Borel, analytic,  or property of Baire.
A sub-question   implicit in   Problem 11.2 in \cite{Kechris/Pestov/Todorcevic05} is the following:

\begin{question}\label{q.KPT}
For which ultrahomogeneous structures $\mathbb{K}$ is there some positive integer $t$ such that
for all $\ell\in\om$,   $\mathbb{K}\,  \ra_* \, (\mathbb{K} )^{\mathbb{K}}_{\ell,t}$?
\end{question}

This question makes sense  in view of the natural topology on  ${\mathbb{K}\choose\mathbb{K}}$ inherited as a   subspace of the Baire space.
Since the universe $K$ of
 $\mathbb{K}$   is countable, we may assume that  $K=\om$.
Letting $\iota: {\mathbb{K}\choose\mathbb{K}}\ra [\om]^{\om}$ be the map defined by $\iota(\mathbb{J})=J$, the universe of $\mathbb{J}$,
  for each
  $\mathbb{J}\in {\mathbb{K}\choose\mathbb{K}}$,
  we see that the $\iota$-image of ${\mathbb{K}\choose\mathbb{K}}$ forms a  subspace of the Baire space, $[\om]^{\om}$.
 Kechris, Pestov, and Todorcevic point out  that  very little is known about Question \ref{q.KPT}, and   immediately  move on to  discuss the  problem of big Ramsey degrees of
 \Fraisse\ structures.

 We give a  brief word about big Ramsey degrees of \Fraisse\ structures as they present   constraints towards  answering Question \ref{q.KPT}.
A  \Fraisse\ limit  $\mathbb{K}$
of a \Fraisse\ class $\mathcal{K}$ is said to have
 {\em finite big Ramsey degrees} if  for each $\mathbb{A}\in \mathcal{K}$, there is some positive integer $t$ such that  for each $\ell\ge 2$,
\begin{equation}
 \mathbb{K}\ra (\mathbb{K})^{\mathbb{A}}_{\ell,t}.
 \end{equation}
 This  is the structural analogue of the infinite Ramsey Theorem \ref{thm.RamseyInfinite}, as the copies of some finite structure are partitioned into finitely many pieces,
  and  one wants a copy of the infinite structure which meets as few of the pieces as possible.
  When such a $t$ exists for a given $\mathbb{A}$, using the notation and terminology from \cite{Kechris/Pestov/Todorcevic05}, we let  $T(\mathbb{A},\mathcal{K})$ denote the minimal such $t$ and call this the {\em big Ramsey degree} of $\mathbb{A}$ in $\mathbb{K}$.
In all known cases, the big Ramsey degree $T(\mathbb{A},\mathcal{K})$ corresponds to a  canonical partition
  of  ${\mathbb{K}\choose\mathbb{A}}$
   into $T(\mathbb{A},\mathcal{K})$ many pieces
   each of which is
{\em persistent}, meaning that for any member $\mathbb{J}$ of
  ${\mathbb{K}\choose\mathbb{K}}$, the set
   ${\mathbb{J}\choose\mathbb{A}}$ meets every piece in the partition.
 Thus, it can be useful to think of the existence of finite  big Ramsey degrees as  a structural Ramsey  theorem  where one finds some  $\mathbb{J}\in{\mathbb{K}\choose\mathbb{K}}$ so that
  ${\mathbb{J}\choose\mathbb{A}}$  achieves one color for all copies of $\mathbb{A}$ in the same piece of the canonical partition.

Big Ramsey degrees for the rationals as a linear order were studied by Sierpi\'{n}ksi, Galvin, and Laver, culminating in work of Devlin \cite{DevlinThesis}.
 The Rado graph was shown to have finite big Ramsey degrees in
 \cite{Sauer06} (extending prior work in  \cite{Pouzet/Sauer96} for edge colorings),  exact degrees  being generated in  \cite{Laflamme/Sauer/Vuksanovic06} and  calculated in \cite{Larson08}.
Finite big Ramsey degrees  were proved for ultrahomogeneous Urysohn spaces in  \cite{NVT08} and for
rationals with finitely many equivalence relations in
 \cite{Laflamme/NVT/Sauer10}.
Zucker  recently answered  Question 11.2  of Kechris, Pestov, and Todorcevic
 in \cite{Zucker19}
 in the context of big Ramsey degrees,
 finding a correspondence between big Ramsey structures (\Fraisse\ structures with big Ramsey degrees which cohere in a natural manner) and topological dynamics.

A fundamental constraint toward answering Question \ref{q.KPT} for the Rado graph,  which we denote by $\mathbb{R}$, comes from work of Sauer  in \cite{Sauer06} and its  culmination  in work by
Laflamme, Sauer, and Vuksanovic in \cite{Laflamme/Sauer/Vuksanovic06}.
In those papers, they use
 antichains
 in the
tree $\mathbb{S}$ of all finite sequences of $0$'s and $1$'s to
  represent the Rado graph.
 Letting  $\mathcal{G}$ denote the  \Fraisse\ class of finite graphs,
 it is proved in \cite{Laflamme/Sauer/Vuksanovic06}
  that for a given finite graph $\mathbb{A}$, its big Ramsey degree $T(\mathbb{A},\mathcal{G})$ equals the number of {\em strong similarity types} (see Definition \ref{def.3.1.Sauer})
 of (strongly diagonal) antichains representing $\mathbb{A}$.
In particular, these strong similarity types form a canonical partition of ${\mathbb{R}\choose\mathbb{A}}$.
The big Ramsey degrees  $T(\mathbb{A},\mathcal{G})$  grow  quickly as the number of vertices in the finite graph $\mathbb{A}$ increase (see \cite{Larson08} for these numbers).

Furthermore,
Laflamme, Sauer, and Vuksanovic
prove  in \cite{Laflamme/Sauer/Vuksanovic06} that given any
representation of $\bR$ as an antichain in $\bS$ and given any
member  $\mathbb{R}'$ of
 ${\mathbb{R}\choose\mathbb{R}}$,
 every strong similarity type  of a finite or even infinite  (strongly diagonal)  antichain in $\mathbb{S}$   embeds  into the set of nodes representing $\mathbb{R}'$.
 In particular, one can apply these  results to
construct  a Borel coloring of ${\mathbb{R}\choose\mathbb{R}}$ with $\om$ many colors each of which persists in any member of ${\mathbb{R}\choose\mathbb{R}}$.
Therefore,  any positive  answer to Question \ref{q.KPT} must restrict to
a subspace of
${\mathbb{R}\choose\mathbb{R}}$ where all members  have  the same (induced) strong similarity type.
(See Section \ref{section.strongtrees}  for more details.)
Therefore, this is the tack we must  take.

Given any Rado graph $\mathbb{R}$ with universe $\om$,
let $\mathcal{R}(\mathbb{R})$ denote the collection of all subgraphs $\mathbb{R}'\in {\mathbb{R}\choose\mathbb{R}}$ with the same (induced) strong similarity type as $\bR$.
(This space will be precisely defined at
 the end of Section \ref{section.strongRadotrees}.)
Note that $\mathcal{R}(\mathbb{R})$ is a topological space, with the topology inherited from
the $\iota$-image of $\mathcal{R}(\mathbb{R})$ as a closed subspace of
the Baire space.
For $\bR' \in\mathcal{R}(\bR)$, we let $\mathcal{R}(\bR')$ denote the subspace of those
$\bR''\in\mathcal{R}(\bR)$ which are subgraphs of $\bR'$.
This space also inherits the Ellentuck topology, and  the notion of completely Ramsey
 makes sense in this setting (see Section \ref{sec.MT}).
 The following is the main theorem of the paper.

\begin{mainthm}\label{thm.main}
Let $\mathbb{R}=(\om,E)$ be the Rado graph.
Then each Borel subset of $\mathcal{R}(\mathbb{R})$ is completely Ramsey.
In particular,
if $\mathcal{X}\sse\mathcal{R}(\mathbb{R})$ is Borel,
then for each $\mathbb{R}'\in\mathcal{R}(\mathbb{R})$,
there is a  Rado graph
 $\mathbb{R}''\in\mathcal{R}(\mathbb{R}')$  such that
$\mathcal{R}(\mathbb{R}'')$ is
either  contained in  $\mathcal{X}$,
or else is disjoint from $\mathcal{X}$.
\end{mainthm}

 Investigations into big Ramsey degrees of Henson graphs  set the stage for the work in this paper.
In January 2019, the author  finished writing  the proof that the $k$-clique-free universal ultrahomogeneous graphs have finite big Ramsey degrees in  \cite{DobrinenH_k19}, building on work for the triangle-free  case in \cite{DobrinenJML20}.
 The constructions in these papers utilized ideas
 from Milliken's topological space of strong trees \cite{Milliken81}
 and  ideas from Sauer's work on the Rado graph in  \cite{Sauer06}.
Developments unique to   \cite{DobrinenJML20} and \cite{DobrinenH_k19} include the introduction of distinguished nodes in the trees used to code specific vertices in a fixed graph
to ensure that no  $k$-cliques are ever introduced,
 and the expansion to the $k$-clique-free setting of a method of Harrington using  forcing techniques
  to give an alternate ZFC proof of the Halpern-\Lauchli\ Theorem.

  Interestingly, these ideas turned out to be useful, and in fact necessary for a satisfactory infinite dimensional Ramsey theorem for  Rado graphs,  as we shall discuss in  Section \ref{section.strongtrees}.
  In that section,
strong trees,  the  Halpern-\Lauchli\ and Milliken Theorems, and relevant ideas and results from
\cite{Laflamme/Sauer/Vuksanovic06}
 are presented   to provide the reader with some intuition for the work in this paper.
 There, we will review
  how nodes in trees can be used to represent graphs,
  the notion of  strong similarity type,  and how the work  in
 \cite{Laflamme/Sauer/Vuksanovic06}  necessitates restricting to subspaces of
 ${\mathbb{R}\choose\mathbb{R}}$ in which all members have  the same (induced) strong similarity type, as we do in  the Main Theorem.
We will also
discuss why the classical Milliken Theorem is not sufficient to provide an answer to Question \ref{q.KPT}.

 In Section \ref{section.strongRadotrees},
 we  construct  topological  spaces $\mathcal{T}_{\bR}$  of {\em  strong Rado coding trees}.
 Fixing any Rado graph $\mathbb{R}$ with universe $\om$,
 the prototype  tree  $\bS_{\mathbb{R}}$  corresponding to $\mathbb{R}=(\om,E)$
is constructed  by placing distinguished nodes $\lgl c_n:n<\om\rgl$  in  the tree $\mathbb{S}$, where $c_n$ is the node with length $n$ representing the $n$-th vertex  of  $\mathbb{R}$.
These distinguished nodes $c_n$ are called {\em coding nodes}.
The space $\mathcal{T}_{\mathbb{R}}$ consists of all subtrees of $\bS_{\mathbb{R}}$ which are
strongly similar to $\bS_{\mathbb{R}}$ as trees with coding nodes
(see Definition \ref{def.3.1.likeSauer}).
There
 is a one-to-one correspondence between the members of $\mathcal{T}_{\mathbb{R}}$ and $\mathcal{R}(\mathbb{R})$,
 which is shown at the  end of  Section \ref{section.strongRadotrees}.
 The set
 $\mathcal{T}_{\mathbb{R}}$
 will be endowed with the topology
  generated by basic open sets determined by
 finite initial subtrees of members of
 $\mathcal{T}_{\mathbb{R}}$, generating a Polish space.
This corresponds  in a simple manner to the topology on $\mathcal{R}(\mathbb{R})$.

 Given a strong Rado coding tree $T\in
 \mathcal{T}_{\mathbb{R}}$, we
 let $\mathcal{T}(T)$ denote the  subspace of all
 members of
 $\mathcal{T}_{\mathbb{R}}$
 which are
 all subtrees of $T$.
 We say that a subset
  $\mathcal{X}\sse\mathcal{T}_{\mathbb{R}}$ is {\em Ramsey} if for each
  $T\in\mathcal{T}_{\mathbb{R}}$,
  there is a subtree $S\in \mathcal{T}(T)$ such that either $\mathcal{T}(S)\sse\mathcal{X}$ or else
 $\mathcal{T}(S)\cap\mathcal{X}=\emptyset$.

\begin{thm}\label{thmmain}
For any Rado graph $\mathbb{R}=(\om,E)$,
Borel subsets of the space
$\mathcal{T}_{\mathbb{R}}$  are Ramsey.
\end{thm}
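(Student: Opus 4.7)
The plan is to transfer the Galvin--Prikry/Ellentuck paradigm from the Baire space to the space $\mathcal{T}_{\bR}$. Define Ellentuck-style basic open sets $[t,T]=\{S\in\mathcal{T}_{\bR}:t\text{ is a finite initial subtree of }S,\ S\in\mathcal{T}(T)\}$, and call $\mathcal{X}\sse\mathcal{T}_{\bR}$ \emph{completely Ramsey} if for every such $[t,T]$ there is $S\in\mathcal{T}(T)$ with $t\sqsubset S$ and $[t,S]\sse\mathcal{X}$ or $[t,S]\cap\mathcal{X}=\emptyset$. Following the Galvin--Prikry induction on Borel complexity, it suffices to show (i) the completely Ramsey sets form a $\sigma$-algebra (closure under complements is immediate, and closure under countable unions is the main work), and (ii) every metric open set is completely Ramsey. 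Once this is established, Borel sets are completely Ramsey, which immediately yields the Ramsey conclusion of the theorem.

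The base case (ii) would be reduced, via a Nash-Williams/Galvin style front argument, to the finite-dimensional Milliken-type theorem for strong Rado coding trees, Theorem \ref{thm.MillikenSWP}: for every finite initial subtree $a$ and every finite coloring of the copies of $a$ inside a fixed $T\in\mathcal{T}_{\bR}$, there is $S\in\mathcal{T}(T)$ on which the color is constant. In turn this theorem rests on a Halpern--L\"auchli pigeonhole for $\mathcal{T}_{\bR}$, proved by adapting the Harrington forcing technique developed in \cite{DobrinenJML20,DobrinenH_k19}: one forces with coding-respecting conditions whose compatibility captures the edge/non-edge pattern prescribed by the coding nodes, and reads off from a generic the required monochromatic subtree inside the ground model.

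The inductive step uses a fusion construction. Given a descending sequence $\mathcal{X}_n$ of completely Ramsey sets (think of the intersection case), one builds a fusion sequence $T_0\contains T_1\contains\cdots$ in $\mathcal{T}_{\bR}$ such that the first $n$ coding levels of $T_n$ are fixed once and for all, and at stage $n$ one uses the base-case Milliken-type theorem to decide, uniformly over every finite initial subtree at the $n$-th stage, whether $[t,T_n]\sse\mathcal{X}_n$ or $[t,T_n]\cap\mathcal{X}_n=\emptyset$. The limit $T_\infty\in\mathcal{T}_{\bR}$ then witnesses complete Ramseyness of $\bigcap_n\mathcal{X}_n$. The critical requirement is that at every truncation step the resulting tree must still be strongly similar to $\bS_{\bR}$ as a tree with coding nodes, so every fusion extension must be engineered to preserve the coding-node structure prescribed by Definition~\ref{def.3.1.likeSauer}.

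The main obstacle, as flagged in the abstract, is that this fusion lacks the amalgamation enjoyed by the Baire space: two different acceptable extensions of a common initial segment inside $\bS_{\bR}$ cannot in general be amalgamated into a single strong Rado coding tree, because the edge predicates forced by the new coding nodes along the two branches may conflict or may create splitting patterns incompatible with strong similarity to $\bS_{\bR}$. I expect this to be bypassed by processing one coding-node level at a time and packaging the finite partial information being diagonalized over inside an \emph{envelope} in the spirit of Sauer~\cite{Sauer06} and \cite{Laflamme/Sauer/Vuksanovic06}: rather than trying to merge two partial subtrees, one embeds the relevant finite data into a single larger finite configuration to which the Milliken-type theorem applies directly, and then extracts the next fusion level from the resulting monochromatic copy. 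Making this envelope-driven fusion preserve strong similarity throughout, while simultaneously accommodating the countably many tasks demanded by the Borel hierarchy, is where I anticipate the principal technical labor of the proof.
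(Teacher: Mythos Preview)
Your overall architecture is right and matches the paper: show that the completely Ramsey sets contain the metric opens and form a $\sigma$-algebra, with the open case handled by a Nash--Williams/rank induction resting on a Halpern--\Lauchli\ pigeonhole proved via Harrington-style forcing. You also correctly locate the obstacle: Axiom~\textbf{A.3}(b) fails for $(\mathcal{T}_{\bR},\le,r)$, so the standard Galvin--Prikry fusion does not go through.

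Where your proposal diverges from the paper is in the \emph{mechanism} for bypassing the missing amalgamation. You suggest an envelope trick \`a la Sauer and Laflamme--Sauer--Vuksanovic: embed the competing partial data into a larger finite configuration and apply a Milliken-type theorem there. The paper does something different and more structural. It never proves an intermediate Milliken theorem; instead it strengthens the target property from ``completely Ramsey'' to a property it calls CR$^*$ (Definition~\ref{defn.CR}), phrased not over basic sets $[A,T]$ with $A\in\mathcal{AT}$ but over sets $[B,T]^*$ with $B\in\widehat{\mathcal{AT}}$, and---crucially---always produces the homogeneous $S$ inside $[D,T]$ where $D=r_n(T)$ is a \emph{full-width} initial segment of $T$. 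The Halpern--\Lauchli\ theorem (Theorem~\ref{thm.matrixHL}) is stated precisely in this form: given $A$ sitting inside $D=r_n(T)$, the output $S$ lies in $[D,T]$, not merely in $[A,T]$. Because one always keeps the whole width of $T$ up to level $n$, there is nothing to amalgamate: any finite configuration $A'$ that might arise later already sits inside $D$, and one can continue the fusion within $[D,S]$. This is what makes the rank induction on Nash--Williams families (Lemma~\ref{lem.Galvin}) and the countable-union step (Lemmas~\ref{lem.GP8}--\ref{lem.ctblU}) go through.

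So the gap in your sketch is that the envelope idea, as stated, does not obviously produce an $S$ that is still usable for \emph{every} finite $A'$ at the next fusion stage; the paper's fix is to never thin below $r_n(T)$ at stage $n$, and to formulate both the pigeonhole and the Ramsey property (CR$^*$) to respect this.
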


The Main Theorem will be deduced from  Theorem \ref{thmmain},  via the  homeomorphism between  $\mathcal{R}(\mathbb{R})$ and
 $\mathcal{T}_{\mathbb{R}}$, discussed at the end of Section \ref{section.strongRadotrees}.
 In fact, we shall prove that Borel  subsets of $\mathcal{R}(\bR)$ are completely Ramsey (and an even stronger property called CR$^*$) in
 Theorem \ref{thm.best}, from which we deduce Theorem \ref{thmmain} and the Main Theorem.

The basic outline of the proof of
Theorem \ref{thmmain}  is simply to prove that the collection of subsets of
$\mathcal{T}_{\mathbb{R}}$ which are Ramsey contains all open sets and is closed under complements and countable unions.
Somewhat surprisingly,
 it is the containment of all open sets that presents the largest difficulty.
 The space
$\mathcal{T}_{\mathbb{R}}$ satisfies almost all of  the four axioms presented  by Todorcevic in \cite{TodorcevicBK10}, but
the axiom \bf A.3\rm(b) fails irreparably.
(See  Chapter 5, Section 1 of \cite{TodorcevicBK10} for further details on topological Ramsey spaces and the four axioms.)
Thus, we cannot simply apply the machinery of topological Ramsey spaces to conclude the Main Theorem.
Here is where the ideas from \cite{DobrinenJML20} and \cite{DobrinenH_k19} come into play.

In Section  \ref{sec.5}  we prove in Theorem \ref{thm.matrixHL}
that colorings of level sets of   strong Rado coding trees have the Ramsey property.
Importantly, this is proved while preserving the width of some finite initial segment of a  Rado coding tree, thus serving as a surrogate for the missing  Axiom \bf A.3\rm(b).

In Section \ref{sec.MainThm},
we prove that Borel subsets of
$\mathcal{T}_{\mathbb{R}}$ are completely Ramsey.
We begin by noticing that
open sets are in one-to-one correspondence with Nash-Williams families (Definition \ref{defn.NWfamily}),
and we prove in Theorem \ref{thm.GalvinNW} that 
 all open sets are completely  Ramsey.
The specific formulation of Theorem \ref{thm.matrixHL}  enables  us to do fusion arguments.\footnote{
The proof of  Theorem \ref{thm.GalvinNW}
included in this final version of the paper was developed for 
\cite{Dobrinen_SDAP}, fixing  a glitch in our original proof given in  2019. 
Later, it  was pointed out that a similar asymmetric version of combinatorial forcing was developed by Todorcevic in notes for a graduate course in Ramsey theory in 2022.  However, those notes do not directly apply to sets of the form $[B,T]^*$, nor do they include the concluding argument in our proof after Lemma \ref{lem.decides}. }

From there, we prove that the collection of sets which are completely Ramsey are closed under complementation and countable union, this last step also relying on how we set up Theorem \ref{thm.matrixHL}
so that we can do fusion arguments without Axiom \bf  A.3\rm(b).
The proof actually achieves more, showing that all Borel subsets of $\mathcal{T}_{\mathbb{R}}$ are CR$^*$ (see Definition \ref{defn.CR}).
The translation back to $\mathcal{R}(\mathbb{R})$ in Section \ref{sec.MT} concludes the proof of the main theorem.

An  interesting quandary is whether the analogue of Ellentuck's theorem holds for the space of ordered Rado graphs $\mathcal{R}$.
A discussion of this as well as the future aim  for the ultimate answer to Question \ref{q.KPT} appears in Section \ref{section.end}.
\vskip.1in

\noindent\bf Acknowledgements. \rm
We thank A.\ Panagiotopoulos for bringing
the problem of whether the Rado graph has
infinite dimensional Ramsey theory
to our attention in 2017.
Extensive  thanks go to J.\ \Nesetril\ and J.\ \Hubicka\ for including the author in the 2016 Ramsey DocCourse.
 During that time, the author was able to make a central advance  in  \cite{DobrinenJML20} due to the  congenial and stimulating  environment.
We also thank Rapha\"{e}l Carroy and Stevo Todorcevic for discussions on this work at the 2019 Luminy Workshop on Set Theory.
Lastly, we thank the anonymous referee for their patience and suggestions for making the writing more clear.


\section{Milliken's Theorem and
constraints on the
infinite dimensional Ramsey theory of Rado graphs}\label{section.strongtrees}

Minimal background on strong trees,  the Ramsey theorems for strong trees due to Halpern-\Lauchli\ and Milliken,
and topological Ramsey spaces are set forth in this section.
These theorems provide some guidelines and intuition for our work.
For a more general exposition of this area, the reader is referred to Chapter 6 in \cite{TodorcevicBK10}.

We use standard set-theoretic notation.
The set of natural numbers
$\{0,1,2,\dots\}$ is denoted by $\om$.
Each natural number $n$  is defined to be the set of natural numbers less than $n$.
Thus,  for $n\in \om$,
 $n=\{0,\dots,n-1\}$.
 We write $n<\om$ to mean $n\in \om$.
For
$n<\om$, $2^n$ denotes the  set of all
functions from $n$ into $2$.
Such functions may be thought of as
sequences of $0$'s and $1$'s of length $n$, and
we also write $s\in 2^n$ as
\begin{equation}
s=\lgl s(0),\dots, s(n-1)\rgl=\lgl s(i):i<n\rgl.
\end{equation}
Throughout, we  shall let $\mathbb{S}$  denote $\bigcup_{n<\om}2^n$; thus,
$\mathbb{S}$ is the set of all  finite sequences of $0$'s and $1$'s.
For $s\in \mathbb{S}$, write $|s|$ to denote the domain of $s$, or equivalently, the length of $s$ as a sequence.
For $m\le |s|$, write $s\re m$ to denote the truncation of the sequence to domain $m$.
 For $s,t\in \mathbb{S}$, we write $s\sse t$ if and only if
 for some $m\le |t|$, $s=t\re m$.
 We write $s\subset t$ to denote that $s$ is a proper initial segment of $t$, meaning that $s=t\re m$ for some $m<|t|$.
The notion of tree we use is  weaker than the usual definition, but is standard for  this area.

\begin{defn}\label{def.tree}
A set of nodes
$T\sse  \mathbb{S}$ is   called a {\em tree} if  there is a set of lengths $L\sse\om$ such that  $t\in T$ implies that
$|t|\in L$ and also
for each $l\in L$ less than $|t|$,
$t\re l\in T$.
Thus,  $T$ is closed under initial segments with lengths in $L$.  We call $L$ the {\em set of levels of $T$}.
\end{defn}

\begin{defn}[Strong Subtrees]\label{def.strongtree}
A tree $T\sse \mathbb{S}$ is a {\em strong subtree}  of $\mathbb{S}$ if
 for each  $t\in T$,  $|t|\in L$  and for each $l\in L$ with $l\le |t|$,
there are   nodes   $t_0,t_1$ in $T$ such that, letting $s=t\re l$,
$t_0\contains s^{\frown}0$
and
$t_1\contains s^{\frown}1$.
Given $T$ a strong subtree of  $\mathbb{S}$, we say that  $S$ is a {\em strong subtree of $T$} if  $S$ is a strong subtree of $\mathbb{S}$ and $S$ is a subset of $T$.
We let $\mathcal{S}$ denote the set of all strong subtrees of $\mathbb{S}$.
We define a partial order $\le$ on $\mathcal{S}$ by $S\le T$ if and only if $S$ is a subtree of $T$, for $S,T\in\mathcal{S}$.
\end{defn}

For $s,t\in \mathbb{S}$, $s\wedge t$,  called the {\em meet} of $s$ and $t$,
equals the sequence  $s\re m$ where $m$ is maximal such that $s\re m=t\re m$.
Given
  $s,t\in \bS$,
 define
$s<_{\mathrm{lex}} t$ if and only if
$s$ and $t$ are incomparable  under $\sse$ and
$s(|s\wedge t|)<t(|s\wedge t|)$.
We say that a bijection $\varphi$ from a tree $T$ to another tree $S$ is a {\em tree isomorphism} if $\varphi$ preserves the tree structure and the lexicographic order of the nodes.
Given $S,T\in  \mathcal{S}$, note that there is exactly one tree isomorphism between them;
 this  is called the {\em strong tree isomorphism} between $S$ and $T$.
Given $T\in\mathcal{S}$
and $L$ its set of levels, let $\lgl l_n:n<\om\rgl$ be the increasing enumeration of $L$.
For $n<\om$,
 let $T(n)$ denote the set $\{t\in T:|t|=l_n\}$.
A {\em level set} in $T$ is a subset $X\sse T$ such that each node in $X$ has the same length; equivalently, $X\sse T(n)$ for some $n<\om$.
Any  strong tree isomorphism takes level sets to level sets.

The Halpern-\Lauchli\ Theorem is a Ramsey theorem for colorings of products of  level sets of finitely many trees.
We  present the version restricted to $\mathcal{S}$, as this is all that is needed in this article.

\begin{thm}[Halpern-\Lauchli, \cite{Halpern/Lauchli66}]\label{thm.matrixHL}
Suppose $d\ge 1$ and $T_i\in \mathcal{S}$,  for each $i<d$.
Let
\begin{equation}
c:\bigcup_{n<\om}\prod_{i<d} T_i(n)\ra 2
\end{equation}
 be given.
Then there
are an infinite set  $N=\lgl n_k:k<\om\rgl \sse \om$ and
 strong subtrees $S_i\sse T_i$
   such that for each $i<d$ and  $k<\om$,
$S_i(k)\sse T_i(n_k)$,
 and  $c$ is monochromatic on
\begin{equation}
\bigcup_{k<\om }\prod_{i<d} S_i(k).
\end{equation}
\end{thm}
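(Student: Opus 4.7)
The plan is to prove the theorem by Harrington's forcing method, which is consonant with the techniques emphasized elsewhere in this paper. The idea is to introduce an auxiliary forcing that heuristically adds many ``generic'' branches through each $T_i$ simultaneously, exploit a pigeonhole on the generic branches to find a forced monochromatic color, and then decode monochromaticity back into the ground model by a fusion construction.

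Concretely, fix a regular cardinal $\kappa$ much larger than $2^{\aleph_0}$, for instance $\kappa=\beth_{d+1}^+$. Consider the product forcing $\mathbb{P}=\prod_{i<d}\mathbb{P}_i$, where a condition in $\mathbb{P}_i$ is a finite partial function from $\kappa$ into a single level of $T_i$, ordered by simultaneous end-extension of each value within $T_i$. A generic filter supplies, for each $\alpha<\kappa$ and $i<d$, an infinite branch $b^i_\alpha$ through $T_i$. Because $\mathbb{P}$ has a small chain-condition and the coloring takes only two values, a standard name-counting/reflection argument yields a condition $p_0\in\mathbb{P}$, a color $\epsilon\in\{0,1\}$, and an unbounded set $I\sse\kappa^d$ of tuples such that $p_0\forces c(b^0_{\alpha_0}(n),\ldots,b^{d-1}_{\alpha_{d-1}}(n))=\epsilon$ for unboundedly many $n$, whenever $(\alpha_0,\ldots,\alpha_{d-1})\in I$.

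The ground-model content of this forcing statement is that for unboundedly many $n$ there is an arbitrarily large ``dense matrix'' in $\prod_{i<d}T_i(n)$, all of whose $d$-tuples receive color $\epsilon$. I would then build $N=\{n_k:k<\om\}$ and the strong subtrees $S_i$ level by level in a fusion. At stage $k$, having constructed $S_i\re k$ matching the strong-subtree shape through level $k-1$, choose $n_k$ larger than all previously used levels together with a dense matrix at level $n_k$ large enough to cover all pairs $s^\frown 0, s^\frown 1$ with $s\in S_i(k-1)$; then define $S_i(k)$ by selecting, for each such $s$ and each $e\in\{0,1\}$, a single extension lying in the matrix. Monochromaticity on $\bigcup_k\prod_{i<d}S_i(k)$ is then immediate from the choice of matrices.

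The main obstacle, as anticipated, is the coordination across the $d$ factors: we need a \emph{single} increasing sequence $\lgl n_k:k<\om\rgl$ that works for all coordinates simultaneously, and this is precisely what the product-forcing argument provides; a naive coordinate-wise pigeonhole would yield $d$ unrelated sequences with no way to align them. A secondary point requiring care is that the dense matrix must be large enough at every stage of the fusion to accommodate strong-subtree splitting into both $0$ and $1$ successors. Taking $\kappa$ very large handles this uniformly, since then the pigeonhole on $\kappa$-many generic branches delivers arbitrarily large finite monochromatic matrices in each $\prod_{i<d}T_i(n)$, which suffices to meet the splitting demands coming from any finite initial segment produced so far in the fusion.
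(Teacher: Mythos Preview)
The paper does not prove this theorem: it is quoted as a classical result with a citation to \cite{Halpern/Lauchli66}, and no proof is given. The paper does, however, discuss Harrington's forcing proof of Halpern--\Lauchli\ in the paragraph following Theorem~\ref{thm.M}, and in Section~\ref{sec.5} it carries out a Harrington-style argument in full detail for its own analogue (the second theorem labeled \texttt{thm.matrixHL}). So your choice of method is exactly in the spirit of the paper.

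That said, your sketch has a genuine gap at the pivotal step. You write that ``because $\mathbb{P}$ has a small chain-condition and the coloring takes only two values, a standard name-counting/reflection argument yields a condition $p_0$, a color $\varepsilon$, and an unbounded set $I\sse\kappa^d$ of tuples'' forcing monochromaticity. This is not how the Harrington argument works, and a chain-condition/name-counting step does not deliver what you need. What is actually required (and what the paper does in Section~\ref{sec.5}) is: for \emph{each} $d$-tuple $\vec\alpha$ choose a condition $p_{\vec\alpha}$ deciding the color on ultrafilter-many levels; then color $2d$-tuples of ordinals by the full combinatorial ``shape'' of $p_{\vec\alpha}$ (its domain pattern, the nodes in its range, the decided color, etc.); then apply the \Erdos--Rado theorem $\beth_{2d-1}^+\to(\aleph_1)^{2d}_{\aleph_0}$ to obtain a large homogeneous set $K$. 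The crucial payoff of this homogeneity---established in the paper as Lemmas~\ref{lem.onetypes}--\ref{lem.compat}---is that for any \emph{finite} family of tuples $\vec\alpha$ drawn from a product $K_0\times\cdots\times K_{d-1}\sse K^d$, the conditions $p_{\vec\alpha}$ are pairwise (indeed simultaneously) compatible. No chain-condition argument gives you this compatibility, and without it the fusion cannot proceed: at stage $k$ you must find a \emph{single} level $n_k$ and a \emph{product} set $\prod_{i<d}X_i$ with $X_i\sse T_i(n_k)$, where each $X_i$ contains extensions of both immediate successors of every node in $S_i(k-1)$, and every tuple in the product has color $\varepsilon$. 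An ``arbitrarily large dense matrix'' that is merely an unstructured subset of $\prod_i T_i(n_k)$ is not enough; the product structure comes precisely from the compatibility of the $p_{\vec\alpha}$, which in turn comes from \Erdos--Rado, not from counting names.

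In short: the overall architecture you propose is correct and matches the paper's philosophy, but the engine you invoke (chain condition plus reflection) is the wrong one. Replace it with \Erdos--Rado applied to a coloring of $2d$-tuples of ordinals by condition-shapes, and then prove the compatibility lemma; the fusion you describe will then go through.
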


The Halpern-\Lauchli\ Theorem is used to obtain
a space of strong trees with  infinite dimensional  Ramsey properties.

\begin{defn}[Milliken space]\label{defn.milliken}
The {\em Milliken space} is the triple
 $(\mathcal{S},\le,r)$,
where
$\le$ is the partial ordering of subtree  on $\mathcal{S}$
and
$r_k(T)=\bigcup_{n<k}T(n)$ is  called the  {\em $k$-th restriction of  $T$}.
\end{defn}

Thus, $r_k(T)$ is a finite  tree with $k$ many levels.
We refer to such trees  as finite strong trees, and we let $\mathcal{AS}$ denote the set of all finite strong trees.
The letters $A,B,C,\dots$ will denote finite subsets of $\bS$,
as  is the custom in  Section 6.2 of \cite{TodorcevicBK10} and in \cite{DobrinenJML20}.
For $A\in\mathcal{AS}$ and $T\in\mathcal{S}$,
let
\begin{equation}
[A,T]=\{S\in\mathcal{S}:\exists k\, (r_k(S)=A)\mathrm{\ and\ }S\le T\}.
\end{equation}
The  sets of the form $[A,\mathbb{S}]$, $A\in \mathcal{AS}$,
generate  the {\em metric topology}
 on $\mathcal{S}$, similarly to the metric topology on the Baire space.
The sets of the form $[A,T]$, $A\in\mathcal{AS}$ and $T\in\mathcal{S}$,
generate a finer  topology  on $\mathcal{S}$, analogous to the Ellentuck topology on the Baire space;
this is called  the {\em Ellentuck topology} on $\mathcal{S}$.
The Halpern-\Lauchli\ Theorem is central to the proof of the next theorem.

\begin{thm}[Milliken, \cite{Milliken81}]\label{thm.M}
If $\mathcal{X}\sse\mathcal{S}$ has the property of Baire in the Ellentuck topology on $\mathcal{S}$,
then for each basic open set $[A,T]$, where  $A\in\mathcal{AS}$ and $T\in\mathcal{S}$,
there is an $S\in [A,T]$ such that either $[A,S]\sse\mathcal{X}$ or else $[A,S]\cap\mathcal{X}=\emptyset$.
\end{thm}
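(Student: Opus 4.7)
The plan is to define, parallel to the Galvin-Prikry notion, a set $\mathcal{X}\sse\mathcal{S}$ to be \emph{completely Ramsey} if for every $A\in\mathcal{AS}$ and every $T\in\mathcal{S}$ there is some $S\in[A,T]$ with either $[A,S]\sse\mathcal{X}$ or $[A,S]\cap\mathcal{X}=\emptyset$, and to show that the collection $\mathrm{CR}$ of completely Ramsey subsets of $\mathcal{S}$ (i) contains every Ellentuck-open set, (ii) is closed under complementation, and (iii) is closed under countable unions. Since sets with the Baire property in the Ellentuck topology differ from open sets by Ellentuck-meager sets, and Ellentuck-meager sets will be shown to lie in $\mathrm{CR}$ via closure under countable unions plus the fact that nowhere dense sets are trivially completely Ramsey (any $[A,T]$ contains some $[A,S]$ avoiding them), these three facts together give the theorem.

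First I would handle Ellentuck-open sets. An open set $\mathcal{U}$ restricted to $[A,T]$ is determined by a family $\mathcal{F}$ of finite strong trees $B\supseteq A$ such that $[B,T]\sse\mathcal{U}$, and by thinning we may assume $\mathcal{F}$ is a Nash-Williams family on the extensions of $A$ in $T$. I would assign each such family a rank (the natural well-founded rank built from the tree of proper end-extensions that do not yet lie in $\mathcal{F}$) and prove by induction on rank that some $S\in[A,T]$ is such that either every extension of $A$ in $S$ in the right dimension hits $\mathcal{F}$, or no extension of $A$ in $S$ does. The base case is precisely Theorem~\ref{thm.matrixHL}: colouring each level set $\prod_{i<d}T_i(n)$ of the finitely many trees comprising the successor level above $A$ in $T$ by ``lies in $\mathcal{F}$ or not,'' Halpern-\Lauchli\ yields the desired one-level monochromatic extension, and the inductive step chains such extensions and diagonalizes.

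Closure under complementation is immediate. Closure under countable unions is the main combinatorial work and is achieved by a fusion: given $\mathcal{X}_n\in\mathrm{CR}$, $n<\om$, and a basic open $[A,T]$, build a $\sse$-decreasing sequence $T=T_{-1}\ge T_0\ge T_1\ge\cdots$ together with an increasing sequence of finite strong trees $A=A_{-1}\sse A_0\sse A_1\sse\cdots$ so that $A_n=r_{|A|+n}(T_n)$ and so that, for every finite strong tree $B$ that extends $A_{n}$ inside $T_n$ with $|B|\le|A|+n+1$, the set $\mathcal{X}_n$ has already been decided on $[B,T_n]$ in the completely Ramsey sense. Each stage uses that $\mathcal{X}_n\in\mathrm{CR}$ finitely often (once per such $B$), and the diagonal tree $S=\bigcup_n A_n$ then lies in $[A,T]$ and decides every $\mathcal{X}_n$ on every $[B,S]$ with $A\sse B\sqsubset S$; this suffices for $\bigcup_n\mathcal{X}_n$ to be completely Ramsey at $[A,T]$.

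The main obstacle will be the fusion step, which requires the Milliken space to admit the amalgamation-style Axiom \textbf{A.3}(b) ensuring that after shrinking below $A_n$ one can still freely choose extensions of the stem. For $\mathcal{S}$ this holds because strong subtrees of a fixed finite initial segment can be refined inside any further strong subtree, but the bookkeeping must be arranged so that the shrinkings $T_n$ respect the previously chosen stems $A_k$, $k<n$. Once the fusion is carried out, extending from open sets to Borel sets and then to sets with the Baire property in the Ellentuck topology is routine, yielding the statement of Theorem~\ref{thm.M}.
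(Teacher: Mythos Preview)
The paper does not supply its own proof of Theorem~\ref{thm.M}; it is quoted as a known result from Milliken's 1981 paper, with only the remark that ``the Halpern--\Lauchli\ Theorem is central to the proof.'' The paper's implicit framing is via Todorcevic's Abstract Ellentuck Theorem~\ref{thm.AET}: one verifies Axioms \textbf{A.1}--\textbf{A.4} for $(\mathcal{S},\le,r)$ (with Halpern--\Lauchli\ supplying \textbf{A.4}) and then invokes Theorem~\ref{thm.AET}. Your proposal instead follows the Galvin--Prikry route directly---open sets CR via rank induction on Nash--Williams families, then closure under complements and countable unions---which is precisely the strategy the paper develops in Section~\ref{sec.MainThm} for the Rado-graph space $\mathcal{T}_{\bR}$ (where \textbf{A.3}(b) fails and the Abstract Ellentuck Theorem is unavailable). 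So your approach is sound, but it is not what the paper does for this particular statement; rather, it anticipates what the paper does for its own main result.

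Two small gaps are worth flagging. First, ``nowhere dense sets are trivially completely Ramsey'' is not trivial: from nowhere-denseness you only get some $[C,V]\sse[A,T]$ avoiding the set, not an $[A,S]$ with the \emph{same} stem $A$. The correct argument is that the complement of the closure is open and dense, hence CR by your step~(i), and the ``in'' option is forced by density; so this step genuinely relies on (i) rather than being immediate. Second, your fusion for countable unions as written only decides $\mathcal{X}_n$ on those $[B,T_n]$ with $|B|\le|A|+n+1$, one index $n$ per stage. To conclude anything about $\bigcup_n\mathcal{X}_n$ on $[A,S]$ you need each $\mathcal{X}_n$ decided on \emph{every} $[B,S]$, so the bookkeeping must treat $\mathcal{X}_0,\dots,\mathcal{X}_n$ simultaneously at stage $n$; after the fusion each $\mathcal{X}_n\cap[\emptyset,S]$ is then open in $[\emptyset,S]$, and you reapply step~(i) to the union (compare the paper's Lemmas~\ref{lem.GP8}--\ref{lem.ctblU}).
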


This states that subsets of $\mathcal{S}$ with the property of Baire in the finer topology are completely Ramsey.
In the current terminology set forth in \cite{TodorcevicBK10},
we say that  Milliken's space of strong trees forms a topological Ramsey space.
 As a special case,  this implies
 \begin{equation}
 \forall \ell\in\om,\ \
\mathbb{S}\, {\xrightarrow{\mathrm{Borel}}}\ (\mathbb{S})^{\mathbb{S}}_{\ell,1},
 \end{equation}
  where ${\mathbb{S}\choose\mathbb{S}}=\mathcal{S}$.

Harrington came up with a novel proof of the Halpern-\Lauchli\ Theorem which uses the method of forcing to achieve a ZFC result.
The proof was known in certain circles, but not widely available until a version appeared in
\cite{Farah/TodorcevicBK}; this  proof  utilizes  a smaller uncountable cardinal, which benefits inquiries into minimal hypotheses, but at the   expense of a more complex proof.
A simpler  version closer to Harrington's original proof appears in \cite{DobrinenRIMS17},
filling in an outline provided to the author by Laver in 2011, at which time she was unaware of the proof in \cite{Farah/TodorcevicBK}.
Harrington's ``forcing proof''
uses the language and machinery of forcing to prove the existence of
finitely many finite level sets whose product  is monochromatic.
Since these objects are finite, they must be in the ground model.
This is iterated infinitely many times to the strong subtrees in the conclusion of Theorem \ref{thm.matrixHL}.
These ideas were utilized in \cite{DobrinenJML20} and \cite{DobrinenH_k19} and will be utilized again in Section \ref{sec.5}.
The difference is that we will be working with trees with special nodes to code vertices of graphs, as discussed in the next section, and so the forcing partial order must be tailored to this set-up.


\subsection{Constraints  on   infinite dimensional Ramsey theory of Rado graphs}\label{subsec.constraints}

In the work on big Ramsey degrees of the Rado graph in \cite{Sauer06} and \cite{Laflamme/Sauer/Vuksanovic06}, the authors
use an antichain of nodes in $\mathbb{S}$ to represent  a copy of the Rado graph.
The representation uses the idea of passing numbers to code the edge/non-edge relation.
Given $s,t\in \mathbb{S}$ with $|s|<|t|$,
if the nodes $s$ and $t$ represent vertices in a graph, then they represent an edge between those vertices if and only if $t(|s|)=1$.
This number $t(|s|)$ is called the {\em passing number} of $t$ at $s$.
An antichain  $A\sse \mathbb{S}$ is called {\em strongly diagonal} if its meet closure has no two nodes of the same length, and the passing number of any node at a splitting node is $0$, except of course for  extensions of the splitting node itself.
Antichains are particularly useful for coding graphs, since all nodes having  different lengths makes the passing numbers, and hence the graph being coded, clear.

Sauer introduced the notion of strong similarity map in \cite{Sauer06}, where he found  upper bounds for the big Ramsey degrees of the Rado graph  given by the number of strong similarity types of
strongly diagonal antichains representing a given finite graph.

\begin{defn}[Strong similarity, \cite{Sauer06}]\label{def.3.1.Sauer}
Let $S$ and $T$ be meet-closed subsets of $\bS$.
A function $f:S\ra T$ is a {\em strong similarity} of $S$ to $T$ if
$f$ is a bijection and
for all nodes $s,t,u,v\in S$, the following hold:
\begin{enumerate}

\item
$f$ preserves lexicographic order:
 $s <_{\mathrm{lex}}  t$ if and only if $f(s) <_{\mathrm{lex}}  f(t)$.

\item
$f$ preserves initial segments:
$s\wedge t\sse u\wedge v$ if and only if $f(s)\wedge f(t)\sse f(u)\wedge f(v)$.

\item
$f$ preserves meets:
$f(s\wedge t)=f(s)\wedge f(t)$.

\item
$f$ preserves relative lengths:
$|s\wedge t|<|u\wedge v|$ if and only if
$|f(s)\wedge f(t)|<|f(u)\wedge f(v)|$.

\item
$f$ preserves {\em passing numbers}:
If   $s,t\in S$ with  $|s|<|t|$,
then $f(t)(|f(s)|)=t(|s|)$.
\end{enumerate}
We  say that $S$ and $T$ are {\em strongly similar}  and  write $S\ssim T$  exactly when there is a strong similarity between $S$ and $T$.
We call the equivalence classes of strongly similar meet-closed sets {\em strong similarity types}.
\end{defn}

The number of strong similarity types of strongly diagonal antichains
representing a finite graph $\mathbb{A}$
were
 proved to be the exact big Ramsey degree
 $T(\mathbb{A},\mathcal{G})$
 in \cite{Laflamme/Sauer/Vuksanovic06}.
 It follows from Theorem 4.1  in
  that paper that given any   strongly diagonal antichain $U\sse\mathbb{S}$  representing  the Rado graph,
each  strong similarity type  of  any strongly diagonal antichain, finite or infinite, embeds into $U$.
Using this result, one can construct an open coloring of  all subcopies of the Rado graph (${\mathbb{R}\choose\mathbb{R}}$ as a topological subspace of $[\om]^{\om}$) into $\om$ many colors such that  for any $\mathbb{R}'\in {\mathbb{R}\choose\mathbb{R}}$,
all of the colors appear in ${\mathbb{R}'\choose\mathbb{R}}$.
Thus, any positive  answer to Question \ref{q.KPT}
must  restrict to a subspace of
 ${\mathbb{R}\choose\mathbb{R}}$ in which all members have the same strong similarity type.
 This is the approach we present in the next section.



Since Laflamme, Sauer, and Vuksanovic used Milliken's Theorem to find the big Ramsey degrees of the Rado graph,
it is natural to ask whether
it can be used to answer  Question \ref{q.KPT} for the Rado graph.
In fact, one can obtain a nominal infinite dimensional Ramsey theory for Rado graphs using Milliken's Theorem, but nothing similar to the Galvin-Prikry Theorem, as we now review.

Each  strong tree $T\in\mathcal{S}$ codes a universal graph, say $\mathbb{U}$,  in the following way:
Let each node in $T$ represent a vertex.
Two nodes $s,t\in T$ code an edge between the vertices they represent  if and only if $|s|\ne|t|$ and the longer node has passing number $1$ at the shorter node.
As was pointed out in \cite{Sauer06},
this $\mathbb{U}$ is universal, so $\mathbb{U}$ embeds into  the Rado graph and the Rado graph embeds into $\mathbb{U}$; however, they are not isomorphic.
In this way, Milliken's Theorem can be interpreted as an infinite-dimensional  Ramsey theorem  on    subcopies of $\mathbb{U}$ whose tree representations have  the same strong similarity type  as $\mathbb{S}$.
However,
this is  not the same as coloring copies of a Rado graph.

Now one can use Milliken's Theorem to achieve the following  sort of
soft
infinite dimensional Ramsey theorem on Rado graphs:
Let $\mathbb{R}$ be the Rado graph with universe $\om$.
Let the nodes in $\mathbb{S}$ represent a copy of $\mathbb{U}$ as an induced  subgraph of $\mathbb{R}$.
Given a strong similarity type $\tau$  of a representation of the Rado graph inside of $\mathbb{S}$,
and
given a  (Baire measurable with respect to the Ellentuck topology on the Milliken space)
 coloring of the members of $\tau$ into finitely many colors,
there is a strong subtree $S\in\mathcal{S}$ such that all members of $\tau$ contained inside $S$ have the same color.
Indeed, this is a special case of   Theorem 6.13 in \cite{TodorcevicBK10}.
Then one can pull out a representation of the Rado graph $R\sse S$, and all Rado graphs represented by subsets of $R$ with strong similarity type $\tau$ will have the same color.
One can even repeat this process for finitely many different strong similarity types, and afterward, pull out a representation of the Rado graph so as to have one color on each of those strong similarity types.

This might seem at first   like a satisfactory solution to Question \ref{q.KPT}.
However, it is quite far from
providing analogues of the Galvin-Prikry or Milliken Theorems, as it
 fails   a``density'' property, precluding any hope of proving that
 definable partitions are   completely Ramsey.
By  the ``density'' property, we mean that
given a Rado graph  $\bR$, a strong similarity type $\tau$,   and a definable coloring on ${\bR\choose\bR}$, one ought to be able to fix any $\bR'\in {\bR\choose\bR}$ in $\tau$ and find
a subgraph $\bR''\in {\bR'\choose\bR}$,  again in $\tau$, so that the members of ${\bR''\choose\bR}$ in $\tau$ all have the same color.
However,
if one fixes a
Rado subgraph $\mathbb{R}'$ of $\bR$ represented by some nodes $\lgl s_n:n<\om\rgl$ within $\bS$, an application of Milliken's Theorem may provide a
strong subtree $S$
which does not contain any of those nodes $s_n$;
so while there will be subsets of $S$ coding Rado graphs, none of them will represent a subgraph of  $\bR'$.
For this same reason, one cannot conclude from Theorem 6.13 in \cite{TodorcevicBK10}   that Borel  subsets of ${\mathbb{R}\choose\mathbb{R}}$
(even restricting to those with the same strong similarity type)
are
 completely Ramsey.
These failures hinge on the fact that  there is no hard-coded representation of the Rado graph inside strong trees.

Our way around these  undesired issues is to use special distinguished  nodes to represent the vertices in a fixed Rado graph.
Then it is always clear which subgraph a strong  subtree represents.
This will allow us to prove the Main Theorem as well as the stronger Theorem \ref{thm.best}
showing that Borel subsets of $\mathcal{R}(\bR)$ are CR$^*$ and hence,  completely Ramsey.


\subsection{Brief introduction to topological Ramsey spaces}\label{subsec.tRs}

The Ellentuck space, mentioned in the Introduction, is the prototype for all topological Ramsey spaces.
After Ellentuck's theorem, many spaces with similar properties were built,
including the Milliken space $(\mathcal{S},\le,r)$ of strong trees.
These were first abstracted by
Carlson and Simpson in \cite{Carlson/Simpson90},
and their approach was refined by
Todorcevic, who  distilled  the key properties of the Ellentuck space into the  four axioms below.
The  rest of this subsection is taken almost verbatim from Section 5.1 in \cite{TodorcevicBK10}, with a few modifications and explanations tailored to this paper.

One assumes a triple
$(\mathcal{R},\le,r)$
of objects with the following properties.
$\mathcal{R}$ is a nonempty set,
$\le$ is a quasi-ordering on $\mathcal{R}$,
 and $r:\mathcal{R}\times\om\ra\mathcal{AR}$ is a mapping giving us the sequence $(r_n(\cdot)=r(\cdot,n))$ of approximation mappings, where
$\mathcal{AR}$ is  the collection of all finite approximations to members of $\mathcal{R}$.
For $a\in\mathcal{AR}$ and $X,Y\in\mathcal{R}$,
\begin{equation}
[a,Y]=\{X\in\mathcal{R}:X\le Y\mathrm{\ and\ }(\exists n)\ r_n(X)=a\}.
\end{equation}

\begin{enumerate}
\item[\bf A.1]\rm
\begin{enumerate}
\item
$r_0(X)=\emptyset$ for all $X\in\mathcal{R}$.\vskip.05in
\item
$X\ne Y$ implies $r_n(X)\ne r_n(Y)$ for some $n$.\vskip.05in
\item
$r_n(X)=r_m(Y)$ implies $n=m$ and $r_k(X)=r_k(Y)$ for all $k<n$.\vskip.1in
\end{enumerate}
\end{enumerate}

For $a\in\mathcal{AR}$, let $|a|$ denote the length of the sequence $a$.
Thus, $|a|$ equals the integer $k$ for which $a=r_k(X)$ for some $X\in\mathcal{R}$.
For $a,b\in\mathcal{AR}$,  we write $a\sqsubseteq b$ if and only if  there are $X\in\mathcal{R}$ and $m\le n$ such that $a=r_m(X)$ and $b=r_n(X)$.
We write
$a\sqsubset b$ if and only if $a\sqsubseteq b$  and $a\ne b$.
\vskip.1in

\begin{enumerate}
\item[\bf A.2]\rm
There is a quasi-ordering $\le_{\mathrm{fin}}$ on $\mathcal{AR}$ such that\vskip.05in
\begin{enumerate}
\item
$\{a\in\mathcal{AR}:a\le_{\mathrm{fin}} b\}$ is finite for all $b\in\mathcal{AR}$,\vskip.05in
\item
$X\le Y$ iff $(\forall n)(\exists m)\ r_n(X)\le_{\mathrm{fin}} r_m(Y)$,\vskip.05in
\item
$\forall a,b,c\in\mathcal{AR}[a\sqsubset b\wedge b\le_{\mathrm{fin}} c\ra\exists d\sqsubset c\ a\le_{\mathrm{fin}} d]$.\vskip.1in
\end{enumerate}
\end{enumerate}

The number $\depth_Y(a)$ is the least $n$, if it exists, such that $a\le_{\mathrm{fin}}r_n(Y)$.
If such an $n$ does not exist, then we write $\depth_Y(a)=\infty$.
If $\depth_Y(a)=n<\infty$, then $[\depth_Y(a),Y]$ denotes $[r_n(Y),Y]$.

\begin{enumerate}
\item[\bf A.3] \rm
\begin{enumerate}
\item
If $\depth_Y(a)<\infty$ then $[a,X]\ne\emptyset$ for all $X\in[\depth_Y(a),Y]$.\vskip.05in
\item
$X\le Y$ and $[a,X]\ne\emptyset$ imply that there is $X'\in[\depth_Y(a),Y]$ such that $\emptyset\ne[a,X']\sse[a,X]$.\vskip.1in
\end{enumerate}
\end{enumerate}

For each $n<\om$, $\mathcal{AR}_n=\{r_n(X):X\in\mathcal{R}\}$.
Given $X\in\mathcal{R}$,
$\mathcal{AR}_n(X)$ denotes the set $\{r_n(Y):Y\le X\}$; that is, $\mathcal{AR}_n$ relativized to $X$.
If $n>|a|$, then  $r_n[a,X]$ denotes the collection of all $b\in\mathcal{AR}_n(X)$ such that $a\sqsubset b$.

\begin{enumerate}
\item[\bf A.4]\rm
If $\depth_Y(a)<\infty$ and if $\mathcal{O}\sse\mathcal{AR}_{|a|+1}$,
then there is $X\in[\depth_Y(a),Y]$ such that
$r_{|a|+1}[a,X]\sse\mathcal{O}$ or $r_{|a|+1}[a,X]\sse\mathcal{O}^c$.\vskip.1in
\end{enumerate}

The  {\em Ellentuck topology} on $\mathcal{R}$ is the topology generated by the basic open sets
$[a,X]$;
it extends the usual metrizable topology on $\mathcal{R}$ when we consider $\mathcal{R}$ as a subspace of the Tychonoff cube $\mathcal{AR}^{\bN}$.
Given the Ellentuck topology on $\mathcal{R}$,
the notions of nowhere dense, and hence of meager are defined in the natural way.
We  say that a subset $\mathcal{X}$ of $\mathcal{R}$ has the {\em property of Baire} if and only if $\mathcal{X}=\mathcal{O}\cap\mathcal{M}$ for some Ellentuck open set $\mathcal{O}\sse\mathcal{R}$ and Ellentuck meager set $\mathcal{M}\sse\mathcal{R}$.
A subset $\mathcal{X}$ of $\mathcal{R}$ is {\em completely Ramsey} if for every $\emptyset\ne[a,X]$,
there is a $Y\in[a,X]$ such that $[a,Y]\sse\mathcal{X}$ or $[a,Y]\cap\mathcal{X}=\emptyset$.
$\mathcal{X}\sse\mathcal{R}$ is {\em completely Ramsey null} if for every $\emptyset\ne [a,X]$, there is a $Y\in[a,X]$ such that $[a,Y]\cap\mathcal{X}=\emptyset$.

\begin{rem}
In \cite{TodorcevicBK10}, Todorcevic omits  the word ``completely'' and  calls such sets simply {\em Ramsey} and {\em Ramsey null}.
In this paper, we  use   the terminology of  Galvin and Prikry in \cite{Galvin/Prikry73}, as the main theorem of this paper provides an analogue of their theorem.
\end{rem}

\begin{defn}[\cite{TodorcevicBK10}]\label{defn.5.2}
A triple $(\mathcal{R},\le,r)$ is a {\em topological Ramsey space} if every subset of $\mathcal{R}$  with the property of Baire  is  completely Ramsey and if every meager subset of $\mathcal{R}$ is completely  Ramsey null.
\end{defn}

The following result can be found as Theorem
5.4 in \cite{TodorcevicBK10}.

\begin{thm}[Abstract Ellentuck Theorem]\label{thm.AET}\rm \it
If $(\mathcal{R},\le,r)$ is closed (as a subspace of $\mathcal{AR}^{\bN}$) and satisfies axioms {\bf A.1}, {\bf A.2}, {\bf A.3}, and {\bf A.4},
then every  subset of $\mathcal{R}$ with the property of Baire is  completely Ramsey,
and every meager subset is completely Ramsey null;
in other words,
the triple $(\mathcal{R},\le,r)$ forms a topological Ramsey space.
\end{thm}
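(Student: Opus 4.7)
The plan is to follow the classical Ellentuck strategy almost verbatim, with axiom \textbf{A.4} playing the role of the finite-dimensional pigeonhole principle and axiom \textbf{A.3} providing the amalgamation needed for fusion. I would organize the argument around three main claims: (i) every Ellentuck-open set is completely Ramsey; (ii) every Ellentuck-nowhere-dense set is completely Ramsey null; (iii) the completely Ramsey null sets form a $\sigma$-ideal. Granting these, the theorem is immediate: a set $\mathcal{X}$ with the property of Baire in the Ellentuck topology writes as $\mathcal{X}=\mathcal{O}\triangle\mathcal{M}$ with $\mathcal{O}$ Ellentuck-open and $\mathcal{M}$ Ellentuck-meager, so (i) produces a refinement that decides $\mathcal{O}$ inside a basic open set $[a,Y]$, and (ii)--(iii) allow a further refinement to remove $\mathcal{M}$, yielding a $Z\in[a,Y]$ on which $[a,Z]$ is contained in $\mathcal{X}$ or is disjoint from $\mathcal{X}$.

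For (i), given an open $\mathcal{X}$ and a basic open set $[a,X]$, I would build a fusion sequence $X=X_0\ge X_1\ge\cdots$ in $[a,X]$ that decides $\mathcal{X}$ at every approximation: at stage $n$ I enumerate the finitely many $b\in\mathcal{AR}_{|a|+n}(X_{n-1})$ extending $a$ and, for each such $b$, apply \textbf{A.4} with $\mathcal{O}_b=\{c\in\mathcal{AR}_{|b|+1}: [c,X_{n-1}]\sse\mathcal{X}\}$ to obtain a refinement in $[\depth_{X_{n-1}}(b),X_{n-1}]$ on which the next-level extensions are monochromatic; \textbf{A.3}(b) then lets me amalgamate these refinements into a single $X_n$ while preserving $r_n(X_n)=r_n(X_{n-1})$, so that the sequence $(r_n(X_n))_{n<\om}$ is coherent. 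Closure of $\mathcal{R}$ as a subspace of $\mathcal{AR}^{\bN}$ produces a fusion limit $Y\in[a,X]$, and the dichotomy propagated along the construction forces either $[a,Y]\sse\mathcal{X}$ or $[a,Y]\cap\mathcal{X}=\emptyset$.

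For (ii), if $\mathcal{N}$ is nowhere dense then its complement is dense, so below every $[b,Z]$ there is $Z'\in[b,Z]$ with $[b,Z']\cap\mathcal{N}=\emptyset$. I would diagonalize this fact along a fusion indexed by all $b\in\mathcal{AR}$ with $a\sqsubseteq b$: at stage $n$ I handle the $n$-th such $b$ by refining below the current fusion approximation to kill $\mathcal{N}$ on $[b,\cdot]$, again invoking \textbf{A.3}(b) to preserve $r_n$ of the previous term, and the limit $Y\in[a,X]$ satisfies $[a,Y]\cap\mathcal{N}=\emptyset$. Claim (iii) is the same machine applied to $\mathcal{M}=\bigcup_n\mathcal{N}_n$, where at stage $n$ I invoke the completely-Ramsey-null property of $\mathcal{N}_n$ below the current approximation in place of a single density witness.

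The main obstacle is the bookkeeping in the fusion. Axiom \textbf{A.3}(b) only allows me to shrink to some $X'\in[\depth_{X_{n-1}}(b),X_{n-1}]$ while preserving non-emptiness of $[a,X']$, so I must carefully schedule at each stage which approximation is being frozen, which refinement (from \textbf{A.4} or from a CR-null witness) is being applied, and which extension $b$ of $a$ is being addressed, so that after $\om$ stages every task has been handled and the coherent sequence of approximations determines a genuine point of $\mathcal{R}$. Once this scheduling is in place, the verifications are routine; but it is precisely this scheduling that later in the paper must be redesigned for $\mathcal{T}_{\mathbb{R}}$, where \textbf{A.3}(b) fails and the abstract theorem cannot be invoked directly.
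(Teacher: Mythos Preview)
The paper does not prove this theorem at all: it is quoted as Theorem~5.4 from Todorcevic's book \cite{TodorcevicBK10} and used only as background to explain why the space $\mathcal{T}_{\mathbb{R}}$ \emph{fails} to fit that framework. So there is no ``paper's own proof'' to compare your sketch against.

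That said, your outline is the standard Ellentuck/Todorcevic strategy and is essentially correct in spirit. One organizational point: the usual proof of (i) does not apply \textbf{A.4} directly to the set $\mathcal{O}_b=\{c:[c,X_{n-1}]\subseteq\mathcal{X}\}$ and then ``propagate the dichotomy''; rather one first runs a combinatorial-forcing (accept/reject) diagonalization to reduce to the case where every one-step extension of $a$ either accepts or rejects $\mathcal{X}$, and \emph{then} a single application of \textbf{A.4} at $a$ decides the color. Your version conflates these two stages, and as written it is not clear why the monochromatic color obtained at each $b$ cohere to force $[a,Y]\subseteq\mathcal{X}$ or $[a,Y]\cap\mathcal{X}=\emptyset$ at the end. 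This is not a fatal gap---the bookkeeping can be made to work---but it is exactly the delicate point, and your last paragraph correctly identifies it as such.
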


The Ellentuck space of course satisfies these four axioms with
$\mathcal{R}=[\om]^{\om}$, the partial ordering  $\le $  being $\sse$, and
the $n$-th approximation to an infinite set $X$ of natural numbers being $r_n(X)=\{x_i:i<n\}$, where
 $\{x_i:i<\om\}$ enumerates $X$ in increasing order.
 Here, $\le_{\mathrm{fin}}$ is simply the partial order $\sse$.

 Milliken's space $(\mathcal{S},\le,r)$ of strong subtrees of $\mathbb{S}$   also  forms a topological Ramsey space, by Theorem \ref{thm.M}.
 The restriction map  presented in Definition \ref{defn.milliken}
yields  that for $T\in\mathcal{S}$,
$r_n(T)$  is the finite tree consisting of the first $n$ levels of $T$.
For finite trees $A,B\in\mathcal{AS}$,
we write $A\le_{\mathrm{fin}}B$ if and only if $A$ is a subtree of $B$ and the maximal nodes in $A$ are also maximal in $B$.
(Recall that we will be using $A,B,C,\dots$ to denote  finite subsets of  $\bS$.)

\begin{rem}
Topological Ramsey space theory, especially  Milliken's space, informs our approach to answering Question \ref{q.KPT}.
However,
 Axiom \bf A.3\rm(b), an amalgamation property, fails for our space of  strong Rado coding trees,
  necessitating the work in Sections \ref{sec.5}
 and \ref{sec.MainThm}.
\end{rem}


\section{Strong Rado coding  trees}\label{section.strongRadotrees}

 Topological spaces of strong Rado coding trees
are  introduced in this section.
Recall  Definition \ref{def.tree}, the slightly looser definition of tree which is  appropriate to  the setting of  strong trees.
The  next two definitions are taken from \cite{DobrinenJML20},
in which the author developed the notion of trees with coding nodes to prove that the triangle-free Henson graph has finite big Ramsey degrees.
It turns out that these ideas are also useful for coding  homogeneous structures without  forbidden configurations, in particular, the Rado graph.
Indeed, the designated coding nodes will help us achieve the ``density'' property, discussed at the end of Subsection \ref{subsec.constraints}, which is crucial for showing that Borel sets  of strong Rado coding trees are completely Ramsey.

\begin{defn}[\cite{DobrinenJML20}]\label{defn.treewcodingnodes}
A {\em tree with coding nodes}
is a
tree $T\sse\mathbb{S}$ along  with a unary function $c^T:N\ra T$, where
 $N\le \om$
   and $c^T:N\ra T$ is an injective  function such that  $m<n<N$ implies $|c^T(m)|<|c^T(n)|$.
\end{defn}

 The  {\em $n$-th coding node} in $T$, $c^T(n)$, will often be denoted as
$c^T_n$.
We will  use $l^T_n$ to  denote  $|c^T_n|$,
the length of $c^T_n$.
The next definition shows how nodes in trees can be used to code a graph.
This  idea  goes back to \Erdos, Hajnal, and Posa, who noticed that the edge/non-edge relation induces the lexicographic order on  any given ordered collection of vertices in a graph.
The  only difference here is that we distinguish  from the outset certain nodes to code particular vertices.

\begin{defn}[\cite{DobrinenJML20}]\label{def.rep}
A graph $\mathbb{G}=(G;E)$ with vertex set $G$ enumerated as $\lgl v_n:n<N\rgl$ ($N\le\om$) is {\em  represented}  by a tree $T$ with  coding nodes $\lgl c^T_n:n<N\rgl$
if and only if
for each pair $m<n<N$,
 $v_n\, \E\, v_m\Longleftrightarrow  c^T_n(l^T_m)=1$.
We will often simply say that $T$ {\em codes} $\mathbb{G}$.
The number $c^T_n(l_m^T)$ is called the {\em passing number} of $c^T_n$ at $c^T_m$.
\end{defn}

Before we define  topological  spaces of strong Rado coding trees, we  extend the definition of strong similarity type  to trees with coding nodes.
The following appears as Definition 4.9 in \cite{DobrinenJML20}; it is simplified for the setting of this paper.

\begin{defn}\label{def.3.1.likeSauer}
Let $S,T$ be trees with coding nodes.
A function $f:S\ra T$ is a {\em strong similarity} of $S$ to $T$ if
$f$ is a bijection and
for all nodes $s,t,u,v\in S$,
(1)--(5) in Definition \ref{def.3.1.Sauer} hold as well as
the following:
\begin{enumerate}
\item[(6)]
$f$ preserves coding  nodes:
$f$ maps the set of  coding nodes in $S$
onto the set of coding nodes in $T$.
\end{enumerate}
We  say that $S$ and $T$ are {\em strongly similar}  and  write $S\ssim T$  exactly when there is a strong similarity between $S$ and $T$.
A function $f$ from $S$ into itself is called a
{\em strong  similarity embedding} if  $f$ is a strong similarity from $S$ to $f[S]$.
\end{defn}

It follows from (4) and (6) that if $S$ and $T$ are strongly similar, then they have the same number $N\le\om$ of coding nodes, and for each $n<N$,  $f(c^S_n)=c^T_n$.
Note that $\ssim$ is an equivalence relation.
We call the equivalence classes {\em strong similarity types}.

\begin{defn}[Spaces of Strong Rado Coding   Trees $(\mathcal{T}_{\mathbb{R}},\le,r)$]\label{defn.RadoTrees}
Fix a Rado graph $\mathbb{R}=(\om,E)$ so that its universe is $\om$.
Use  $v_n$ to  denote $n$, the $n$-th vertex of $\mathbb{R}$, so that there is no ambiguity when we are referring to vertices.
Let $\bS_{\mathbb{R}}$ be the tree $\mathbb{S}$ with coding nodes $\lgl c^{\bS_{\mathbb{R}}}_n:n<\om\rgl$,
where  $c^{\bS_{\mathbb{R}}}_n$ is the node of length $n$ representing  $v_n$.
That is, $c^{\bS_{\mathbb{R}}}_n$ is the node  in $\bS$ of length $n$ such that for all $m<n$,
 $c^{\bS_{\mathbb{R}}}_n(m)=1$ if and only if $v_n\, E\, v_m$.

The space $\mathcal{T}_{\mathbb{R}}$ consists of
all
images of strong similarity embeddings of $\mathbb{S}_{\mathbb{R}}$ into itself.
Thus, each $T\in \mathcal{T}_{\mathbb{R}}$  is a strong coding subtree of $\mathbb{S}_{\mathbb{R}}$  such that $T\ssim
\mathbb{S}_{\mathbb{R}}$.
The members of $\mathcal{T}_{\mathbb{R}}$ are called  {\em strong Rado coding trees}, and abbreviated as {\em sRc trees}.

We partially order
 $\mathcal{T}_{\mathbb{R}}$ by    inclusion.
Thus,   for $S,T\in \mathcal{T}_{\mathbb{R}}$,  we write
$S\le T$ if and only if $S$ is a  subtree of $T$.
Define the restriction map $r$ as follows:
Given $T\in \mathcal{T}_{\mathbb{R}}$
and $k<\om$,
$r_k(T)$ is the finite subtree of $T$ consisting of all nodes in $T$ with length less than $l^T_k$.
Define
\begin{equation}
\mathcal{AT}_k=\{r_k(T):T\in\mathcal{T}_{\mathbb{R}}\},
\end{equation}
the set of all $k$-th restrictions of members of $\mathcal{T}_{\mathbb{R}}$.
Let
\begin{equation}
\mathcal{AT}=\bigcup_{k<\om}\mathcal{AT}_k,
\end{equation}
the set of all finite approximations to members of $\mathcal{T}_{\mathbb{R}}$.

For $A,B\in\mathcal{AT}$
we write
 $A\sqsubseteq B$  if and only if
 there is some $T\in\mathcal{T}_{\mathbb{R}}$ and some $j\le k$ such that
 $A=r_j(T)$ and $B=r_k(T)$.
 In this case,
  $A$ is called  an  {\em initial segment} of $B$; we also
say that $B$ {\em end-extends} $A$.
If $A\sqsubseteq B$ and $A\ne B$, then we say that $A$ is a {\em proper initial segment} of $B$ and write $A\sqsubset B$.
Furthermore,  when  a  $j$ exists such that $A=r_j(T)$,
we shall also write $A\sqsubset T$
 and call $A$ an
 {\em initial segment} of $T$.

The {\em metric topology} on $\mathcal{T}_{\mathbb{R}}$ is the topology induced by basic open cones   of the form
\begin{equation}
[A,\mathbb{S}_{\mathbb{R}}]=\{S\in \mathcal{T}_{\mathbb{R}}: \exists k\,  (r_k(S)=  A)\},
\end{equation}
 for  $A\in\mathcal{AT}$.
The {\em Ellentuck topology} on $\mathcal{T}_{\mathbb{R}}$ is induced by basic open sets of the form
\begin{equation}
[A,T]=\{S\in \mathcal{T}_{\mathbb{R}}:\exists k\,(r_k(S)=A)\mathrm{\ and\ } S\le T\},
\end{equation}
where $A\in\mathcal{AT}$ and $T\in\mathcal{T}_{\mathbb{R}}$.
Thus, the Ellentuck topology refines the metric topology.

Given $A\in\mathcal{AT}$, let $\max(A)$ denote the set of all maximal nodes in $A$.
Define the partial ordering $\le_{\mathrm{fin}}$ on $\mathcal{AT}$ as follows:
For   $A,B\in\mathcal{AT}$,
write
$A\le_{\mathrm{fin}} B$ if and only if  $A$ is a subtree of $B$ and $\max(A)\sse\max(B)$.
 Define  $\depth_T(A)$ to equal  the  $k$ such that  $A\le_{\mathrm{fin}}r_k(T)$, if it exists; otherwise, define $\depth_T(A)=\infty$.
Lastly, given  $j<k<\om$, $A\in\mathcal{AT}_j$ and $T\in\mathcal{T}_{\mathbb{R}}$, define
\begin{equation}
r_k[A,T]=\{r_k(S):S\in [A,T]\}.
\end{equation}
\end{defn}

\begin{notation}\label{notn.R<}
Coding nodes in $\bS_{\bR}$
will be notated simply as $\lgl c_n:n<\om\rgl$.
The length of $c_n$ will be denoted as $l_n$.
\end{notation}

In  the definition of $\mathbb{S}_{\bR}$,
we specified that
 $l_n=n$.
  In order to avoid confusion,
 we shall write $l_n$ when we are referring to the length of the $n$-th coding node in $\mathbb{S}_{\bR}$, so as to be consistent with the usage of $l^T_n$ for the length of the $n$-th coding node in a  tree $T\in\mathcal{T}_\bR$.

The following are  immediate consequences of the above definitions.
For any $S,T\in\mathcal{T}_{\mathbb{R}}$,
$S$ and $T$ are strongly similar, and
the strong similarity map  from $S$ to $T$ takes $c^S_n$ to $c^T_n$, for each $n<\om$.
Note that for any $T\in\mathcal{T}_{\mathbb{R}}$,
$r_0(T)$ is the empty set.

\begin{defn}\label{defn.G_TinR}
Let $\mathbb{R}=(\om,E)$ be a fixed Rado graph.
Given $T\in\mathcal{T}_{\mathbb{R}}$, let
$i_n$ denote $l^T_n$, and let
$\mathbb{G}_T$ denote  the  induced subgraph of $\bR$
with universe
$\{v_{i_n}:n\in\om\}$.
\end{defn}

Indeed,
$\mathbb{G}_T$ is the graph  represented  by the coding nodes in $T$:
The $n$-th vertex  of  $\mathbb{G}_T$ is $v^T_n=v_{i_n}$
and for $m<n$,
\begin{equation}
 v^T_n\, E\, v^T_m
 \longleftrightarrow
 c^T_n(l^T_m)=1
\longleftrightarrow
v_{i_n}\, E\, v_{i_m}.
\end{equation}

\begin{thm}\label{thm.Radocode}
Let  $\mathbb{R}=(\om,E)$ be a Rado graph.
Then the graph $\mathbb{G}_{\bS_{\bR}}$  equals  $\bR$.
Furthermore,  for each member $T\in\mathcal{T}_{\bR}$, the subgraph $\mathbb{G}_{T}$ of $\bR$ represented by $T$ is order-isomorphic to $\bR$.
\end{thm}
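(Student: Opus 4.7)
The plan is to unpack the definitions in turn, with the one conceptual ingredient being the identification of the coding nodes of a strong Rado coding tree $T$ with appropriate coding nodes of $\bS_\bR$. For the first assertion, by the very definition of $\bS_\bR$ we have $l^{\bS_\bR}_n=n$ for every $n<\om$, so the vertex universe of $\mathbb{G}_{\bS_\bR}$ is $\{v_{l^{\bS_\bR}_n}:n<\om\}=\{v_n:n<\om\}=\om$, which is precisely the vertex set of $\bR$. Since $\mathbb{G}_{\bS_\bR}$ is by Definition \ref{defn.G_TinR} the induced subgraph of $\bR$ on this universe, the equality $\mathbb{G}_{\bS_\bR}=\bR$ follows immediately.

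For the second assertion, fix $T\in\mathcal{T}_\bR$, realized as the image of a strong similarity embedding $f:\bS_\bR\to\bS_\bR$. The first step is to verify the key identification $c^T_n=c_{i_n}$ for each $n$, where $i_n:=l^T_n$. By clause~(6) of Definition \ref{def.3.1.likeSauer}, $f$ sends coding nodes to coding nodes, so $f(c_n)=c_{j_n}$ for some $j_n$; matching lengths gives $j_n=|c_{j_n}|=|f(c_n)|=l^T_n=i_n$, and since $\bS_\bR$ has a unique coding node of each length, $c^T_n=c_{i_n}$.

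Next I would define $\varphi:\bR\to\mathbb{G}_T$ by $\varphi(v_n)=v_{i_n}$. This is a bijection onto the vertex set of $\mathbb{G}_T$ which preserves the natural linear order since $(i_n)_{n<\om}$ is strictly increasing. To see $\varphi$ preserves edges, fix $m<n$. By construction of $\bS_\bR$, $v_n\,E\,v_m\iff c_n(m)=1$. Applying clause~(5) of Definition \ref{def.3.1.Sauer} to $f$ gives $c^T_n(l^T_m)=c_n(|c_m|)=c_n(m)$, and using $c^T_n=c_{i_n}$ together with $l^T_m=i_m$ we obtain $c_n(m)=c_{i_n}(i_m)$. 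Since the defining property of coding nodes in $\bS_\bR$ yields $c_{i_n}(i_m)=1\iff v_{i_n}\,E\,v_{i_m}$, we conclude $v_n\,E\,v_m\iff \varphi(v_n)\,E\,\varphi(v_m)$. No significant obstacle arises: the theorem is essentially a bookkeeping check confirming that the abstract strong-similarity condition on passing numbers interacts correctly with the concrete edge structure of $\bR$ as coded by $\bS_\bR$.
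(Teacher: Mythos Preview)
Your proposal is correct and follows essentially the same approach as the paper: both arguments use preservation of passing numbers under the strong similarity $f$ to transfer the edge relation from $\bR$ to $\mathbb{G}_T$ via the map $v_n\mapsto v_{i_n}$. Your write-up is in fact more explicit than the paper's, which simply records $c^T_n(l^T_m)=1\Longleftrightarrow c_n(l_m)=1$ and declares the order-isomorphism; you additionally verify the identification $c^T_n=c_{i_n}$ (which the paper leaves implicit in the remark preceding the theorem) and spell out why $\varphi$ is an edge-preserving order bijection.
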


\begin{proof}
By definition,
$\mathbb{G}_{\bS_{\bR}}$ is the subgraph of
$\bR$ represented  by all the coding nodes in
 $\bS_{\bR}$, which is exactly $\bR$.

Given a  strong Rado coding   tree $T\in \mathcal{T}_{\bR}$,
$T$ is the image of a strong similarity embedding from $\bS_\bR$.
Hence,
$T\ssim\bS_{\bR}$, so
for each pair $m<n<\om$,
\begin{equation}
c^T_n(l^T_m)=1\, \longleftrightarrow\,  c_n(l_m)=1.
\end{equation}
Therefore,
the  graphs $\mathbb{G}_T$ and $\mathbb{G}_{\bS_{\bR}}=\bR$ are order-isomorphic.
\end{proof}

\begin{defn}\label{defn.R}
Given a Rado graph $\mathbb{R}=(\om,E)$,
let $\mathcal{R}(\mathbb{R})$ denote the set of subgraphs $\mathbb{G}_T\le \mathbb{R}$ such that $T\in\mathcal{T}_{\bR}$.
\end{defn}

\begin{rem}
Note that $\mathcal{R}(\mathbb{R})$ is a subset of
${\mathbb{R}\choose\mathbb{R}}$,
and each  member
of $\mathcal{R}(\mathbb{R})$ is
order-isomorphic to $\bR=(\om,E)$.
However, there are order-isomorphic copies of
${\mathbb{R}\choose\mathbb{R}}$  which are not members of  $\mathcal{R}(\mathbb{R})$, because there are many different strong similarity types of trees  with coding nodes which represent   order-isomorphic copies of $\bR$.
\end{rem}

\begin{defn}\label{def.G_T}
Given a  subgraph $\mathbb{G}\le \bR=(\om,E)$,
the universe $G$ of $\mathbb{G}$ is a
 set of natural numbers.
 Let $T_{\mathbb{G}}$ denote the subtree of $\bS_\bR$ induced by the coding nodes $\{c_n:n\in G\}$;
thus, the set of  levels of $T_{\mathbb{G}}$ is  $L_{\mathbb{G}}=\{l_n:n\in G\}$,
and $T_{\mathbb{G}}$ is the tree produced by taking all meets of coding nodes in $\{c_n:n\in G\}$ and then taking restrictions of the  nodes in this meet-closed set  to the levels  in $L_{\mathbb{G}}$.
\end{defn}

By Definition \ref{defn.R},
the mapping $T\mapsto \mathbb{G}_T$ produces a
 one-to-one correspondence between members of  $\mathcal{T}_{\bR}$ and   $\mathcal{R}(\bR)$.
The mapping is easily reversible:
Given $\mathbb{R}'\in \mathcal{R}(\bR)$,
note that $T_{\bR'}$ is a member of $\mathcal{T}_{\bR}$, and that
$\mathbb{G}_{T_{\mathbb{R}'}}=\mathbb{R}'$.
Conversely,
given $S\in\mathcal{T}_\bR$,
$\mathbb{G}_S$ is a member of  $\mathcal{R}(\bR)$, and
$T_{\mathbb{G}_S}=S$.

The
 space of all infinite subgraphs of $\bR=(\om,E)$ corresponds to the Baire  space $[\om]^{\om}$ by associating a subgraph of $\mathbb{G}\le \bR$ with its universe $G$, which is a subset of $\om$.
 In particular,
 the map $\iota:\mathcal{R}(\bR)\ra[\om]^{\om}$
 defined  by $\iota(\mathbb{G})=G$ identifies $\mathcal{R}(\bR)$ with its $\iota$-image,
 which is a closed subspace of $[\om]^{\om}$.
 In what follows, we identify
 $\mathcal{R}(\bR)$ with $\iota[\mathcal{R}(\bR)]$.
The basic open sets of $\mathcal{R}(\bR)$ are those of the form
 Cone$(s)\cap\mathcal{R}(\bR)$, where
Cone$(s)=\{X\in [\om]^{\om}:s\sqsubset  X\}$ for $s\in [\om]^{<\om}$.
The map $\theta:\mathcal{R}(\bR)\ra\mathcal{T}_\bR$,  defined by
 $\theta(\mathbb{G})=T_{\mathbb{G}}$, is a bijection, with the further property that  $\mathbb{G}\le\mathbb{H}$ if and only if $T_{\mathbb{G}}\le T_{\mathbb{H}}$.
Moreover, $\theta$ is a homeomorphism, since
for each $s\in [\om]^{<\om}$ which is an initial segment of a member of
$\mathcal{R}(\bR)$,
$\theta($Cone$(s))$ is open in $\mathcal{T}_\bR$.
Furthermore,
 for each $A\in\mathcal{AT}$, $\theta^{-1}([A,\bS_\bR])$ is a union of basic open sets in $\mathcal{R}(\bR)$.
Thus, results about Borel subsets of $\mathcal{T}_{\bR}$ correspond to results about Borel subsets of $\mathcal{R}(\bR)$.
This will be revisited at the end of Section \ref{sec.MainThm}.


\section{A Halpern-\Lauchli-style Theorem for strong Rado coding  trees}\label{sec.5}

Fix a Rado graph $\bR=(\om,E)$.
We shall usually drop subscripts and   let  $\bS$ denote $\bS_{\bR}$ and $\mathcal{T}$ denote $\mathcal{T}_{\bR}$.
The topological space $\mathcal{T}$ of strong Rado coding trees defined in the previous section turns out to satisfy
all but half of one
of the four axioms  of Todorcevic in \cite{TodorcevicBK10}  guaranteeing  a topological Ramsey space.
The first two axioms are easily shown to hold, and the pigeonhole principle  (Axiom \bf A.4\rm)   is  a consequence of
work in this section (see Corollary \ref{cor.A.4}).
However, the amalgamation principle (Axiom
\bf A.3\rm(b))  fails for $\mathcal{T}$,
so we cannot simply apply Todorcevic's axioms and invoke his Abstract Ellentuck Theorem to deduce infinite dimensional Ramsey theory on $\mathcal{T}$.
It is this failure of outright amalgamation that presents the interesting challenge to proving that Borel subsets in $\mathcal{T}$ have the Ramsey property.

Our approach is to
 build the infinite dimensional Ramsey theory on $\mathcal{T}$   in a similar manner as Galvin and Prikry did for the Baire space in \cite{Galvin/Prikry73}.
 However,
 even that approach  is not exactly replicable in $\mathcal{T}$, again due to lack of amalgamation.
In this section, we prove a Ramsey theorem for colorings of level sets, namely
Theorem \ref{thm.matrixHL}.
This theorem  will yield an enhanced version of  Axiom \bf A.4 \rm   strong enough to replace some uses of
  Axiom \bf A.3\rm(b) in the Galvin-Prikry proof, providing
 alternate means  for  proving that Borel  subsets of $\mathcal{T}$  are Ramsey in the next section.

Here, we mention a theorem of \Erdos\ and Rado which will
be used in the proof of the main theorem of this section.
This  theorem   guarantees cardinals large enough to have the Ramsey property for colorings with  infinitely many colors.

\begin{thm}[\Erdos-Rado]\label{thm.ER}
For $r<\om$ and $\mu$ an infinite cardinal,
$$
\beth_r(\mu)^+\ra(\mu^+)_{\mu}^{r+1}.
$$
\end{thm}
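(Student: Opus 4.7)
The plan is to proceed by induction on $r$, using the classical construction of an \emph{end-homogeneous} set to reduce the dimension of the coloring by one at each inductive step.

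The base case $r=0$ reduces to $\mu^+\ra(\mu^+)^1_\mu$: given a partition of $\mu^+$ into $\mu$ pieces, one piece must have cardinality $\mu^+$, because $\mu^+$ is regular with cofinality exceeding $\mu$.

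For the inductive step, assume the statement for $r$ and let $\kappa=\beth_{r+1}(\mu)^+$ with a coloring $c:[\kappa]^{r+2}\ra\mu$. The first task is to produce an end-homogeneous set $S\sse\kappa$ of order type $\beth_r(\mu)^+$, meaning that for every $\vec{\beta}\in[S]^{r+1}$ and every $\xi\in S$ with $\xi>\max\vec{\beta}$, the value $c(\vec{\beta}\cup\{\xi\})$ depends only on $\vec{\beta}$. I would construct $S=\{\xi_\al:\al<\beth_r(\mu)^+\}$ by recursion, maintaining at stage $\al$ a working set $X_\al\sse\kappa$ of cardinality $\kappa$ with $\sup\{\xi_\gamma:\gamma<\al\}<\min X_\al$ on which all color constraints imposed by earlier stages already hold. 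Setting $\xi_\al=\min X_\al$, we then refine $X_\al\setminus\{\xi_\al\}$ once for each new $(r+1)$-tuple $\vec{\beta}$ drawn from $\{\xi_\gamma:\gamma\le\al\}$ and containing $\xi_\al$: partition it according to the color $c(\vec{\beta}\cup\{\cdot\})$ and choose a piece of cardinality $\kappa$ as $X_{\al+1}$. The number of constraints accumulated by stage $\al$ is at most $|\al|^{r+1}\le\beth_r(\mu)$, and the number of possible combined refinement patterns is at most $\mu^{\beth_r(\mu)}=\beth_{r+1}(\mu)<\kappa$, so the working sets retain cardinality $\kappa$ throughout.

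Once $S$ is constructed, define the reduced coloring $c':[S]^{r+1}\ra\mu$ by $c'(\vec{\beta})=c(\vec{\beta}\cup\{\xi\})$ for any $\xi\in S$ above $\max\vec{\beta}$; end-homogeneity guarantees this is well-defined. Since $|S|=\beth_r(\mu)^+$, the induction hypothesis yields a $c'$-homogeneous subset $H\sse S$ of cardinality $\mu^+$, and then $H$ is $c$-homogeneous for $(r+2)$-tuples, completing the induction.

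The main obstacle is the bookkeeping at limit stages of the recursion together with the cardinal arithmetic that keeps it running: one must arrange the intersections at limit ordinals $\al<\beth_r(\mu)^+$ so that the working sets remain of cardinality $\kappa$. The cleanest way to handle this is to replace the bare recursion with an increasing chain of elementary submodels $\lgl M_\al:\al<\beth_r(\mu)^+\rgl$ of some $H(\theta)$, each of cardinality $\beth_r(\mu)$ and containing $c$, and to extract $S$ from a set-theoretic ``trace'' on $\kappa$; the inequality $\mu^{\beth_r(\mu)}=\beth_{r+1}(\mu)<\kappa$ is what lets each $M_\al$ ``see'' enough of $\kappa$ for the construction to continue.
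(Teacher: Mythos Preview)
The paper does not supply a proof of this theorem; it is quoted as the classical \Erdos--Rado theorem and invoked as a black box in the proof of the Halpern--\Lauchli-style theorem for strong Rado coding trees. Your sketch is the standard inductive argument via end-homogeneous sets, and the outline is correct. One minor remark: the limit-stage bookkeeping in the direct recursion is indeed the delicate point, and your suggested fix via an elementary chain is the usual remedy, though the models are typically taken of size $\beth_{r+1}(\mu)$ (so that they can be arranged to be closed under $\beth_r(\mu)$-sequences, using $\beth_{r+1}(\mu)^{\beth_r(\mu)}=\beth_{r+1}(\mu)$) rather than $\beth_r(\mu)$ as you wrote.
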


We begin setting up notation needed for
Theorem \ref{thm.matrixHL}.
Recall that for $t\in \bS$ and $l\le |t|$,
$t\re l$ denotes the initial segment of $t$ with domain $l$.
For any subset $U$ of $\bS$, finite or infinite,
we let
\begin{equation}
\widehat{U}=\{t\re l:t\in U\mathrm{\ and\ }l\le|t|\},
\end{equation}
the tree of all initial segments  of members of $U$.
For a  finite subset $A\sse\bS$,
define
\begin{equation}
l_A=\max\{|t|:t\in A\},
\end{equation}
the maximum of the lengths of  nodes in $A$.
For $l\le l_A$,
let
\begin{equation}
A\re l=\{t\re l : t\in A\mathrm{\ and\ }|t|\ge l\}
\end{equation}
 and let
\begin{equation}
A\rl l=\{t\in A:|t|< l\}\cup A\re l.
\end{equation}
Thus, $A\re l$ is a level set, while $A\rl l$ is the set of nodes in $A$ with length less than $l$ along with the truncation
to $l$ of the  nodes in $A$ of length at least
 $l$.
In particular,
$A\re l=\emptyset$ for $l>l_A$, and
 $A\rl l=A$  for  $l\ge l_A$.
If $l$ is not the length of any node in $A$, then
  $A\rl l$ will  not  be a subset  of $A$, but  it is of course a subset of $\widehat{A}$.
Let
\begin{equation}\label{eq.AThat}
\widehat{\mathcal{AT}}=\{A\rl l:A\in\mathcal{AT}\mathrm{\ and\ }l\le l_A\}.
\end{equation}
Given $T\in \mathcal{T}$,
let  $\mathcal{AT}(T)$ denote the members of $\mathcal{AT}$ which are contained in $T$.
For $k<\om$, let $\mathcal{AT}_k(T)$ denote the set of those $A\in\mathcal{AT}_k$ such that $A$ is a subtree of $T$.
 Let $L_T=\{|t|:t\in T\}$
 and  define
\begin{equation}\label{eq.AThatT}
\widehat{\mathcal{AT}}(T)=\{A\rl l:
A\in  \mathcal{AT}(T) \mathrm{\ and\ }\ l\in L_T\}.
\end{equation}
It is important  that the maximal nodes in any member of  $ \widehat{\mathcal{AT}}(T)$
have  length in $L_T$
and therefore split in $T$.
However, there are  members of $\widehat{\mathcal{AT}}(T)$ which are not  strongly similar to  $r_n(\bS)$ for any $n$, and hence are not members of $\mathcal{AT}(T)$.
These  notions can   be relativized to any $B\in \mathcal{AT}$ in place of $T$.

Parts of the following definition will be used in this section, and the rest will be used in the next section.

\begin{defn}\label{defnhatAT}
Given $T\in \mathcal{T}$ and
$B\in\widehat{\mathcal{AT}}(T)$,
letting $m$ be the least integer
for which there exists $C\in\mathcal{AT}_m$ such that $\max(C)\sqsupseteq \max(B)$,
define
\begin{equation}\label{eq.AT}
[B,T]^*=\{S\in\mathcal{T}: \max(r_m(S))\sqsupseteq\max(B)\mathrm{\ and\ }  S\le T\}.
\end{equation}
For $n\ge m$, define
\begin{equation}\label{eq.rn*}
r_n[B,T]^*=\{r_n(S):S\in [B,T]^*\},
\end{equation}
and let
\begin{equation}
r[B,T]^*=\bigcup_{n\ge m} r_n[B,T]^*.
\end{equation}
\end{defn}

We point out that
$r_{n}[B, T]^*$ defined in equation (\ref{eq.rn*})
is equal to
$\{ C\in\mathcal{AT}_{n}(T):\max(C)\sqsupseteq \max(B)\}$.
\vskip.1in

\noindent\bf{Hypotheses for Theorem \ref{thm.matrixHL}.} \rm
Let $T\in\mathcal{T}$ be  fixed.
Suppose  $D=r_n(T)$ for some $n<\om$.
Given $A\in \widehat{\mathcal{AT}}(T)$ with $\max(A)\sse\max(D)$,
let  $A^+$ denote the union of $A$  with the set of   immediate successors  in    $\widehat{T}$ of the members of $\max(A)$; thus,
\begin{equation}
A^+=A\cup \{s^{\frown}i : s\in \max(A)\mathrm{\ and\ } i\in\{0,1\}\}
\end{equation}
and    $\max(A^+)$ is a level set of nodes of length $l_A+1$.
Let $B$ denote  the subset of $A^+$ which will be end-extended to members of $\mathcal{AT}(T)$ which are colored.
We consider two  cases  for triples $(A,B,k)$,
where $B\in\widehat{\mathcal{AT}}(T)$,  $A\sqsubset B$,
and $\max(B)\sse\max(A^+)$:

\begin{enumerate}
\item[]
\begin{enumerate}
\item[\bf{Case (a).}]
$k\ge 1$,
$A\in \mathcal{AT}_k(T)$,
and
 $B=A^+$.
\end{enumerate}
\end{enumerate}

\begin{enumerate}
\item[]
\begin{enumerate}
\item[\bf{Case (b).}]
$A$ has at least one node, and
each member of $\max(A)$ has exactly one extension in $B$.
Let $k$ be the integer satisfying $2^k=\mathrm{card}(\max(A))$.
\end{enumerate}
\end{enumerate}
In both   cases,
we will be working with $r_{k+1}[B, T]^*$.
It can be useful to notice that this set  $r_{k+1}[B, T]^*$ is equal to
$\{ C\in\mathcal{AT}_{k+1}(T):\max(C)\sqsupseteq \max(B)\}$.

In Case (a),   $A$ is actually a member of
$\mathcal{AT}_k(T)$.
In Case (b), $A$ may or may not be a member of
$\mathcal{AT}$;
it is possible that there are several different truncations of $A$ which are members of
$\mathcal{AT}$.

\begin{thm}\label{thm.matrixHL}
Let $T,D, A, B,k$ be as in one of Cases (a) or (b) in the Hypotheses above.
Let
  $h:  r_{k+1}[B,T]^*\ra 2$  be a coloring.
Then  there is a Rado tree $S\in [D,T]$ such that
$h$ is monochromatic on $ r_{k+1}[B,S]^*$.
\end{thm}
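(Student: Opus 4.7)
The plan is to adapt Harrington's forcing proof of the Halpern-\Lauchli\ theorem, as developed by the author for trees with coding nodes in \cite{DobrinenJML20} and \cite{DobrinenH_k19}, to the Rado setting.  For a sufficiently large cardinal $\kappa$ obtained from the \Erdos-Rado theorem, I would introduce a forcing $\bP$ whose conditions $p$ specify a finite set $F_p\in[\kappa]^{<\om}$ together with, for each $\alpha\in F_p$, a level set $U^p_\alpha\sse T$ consisting of one node extending each member of $\max(B)$, all at a common level of $T$ chosen so that the resulting end-extension of $B$ yields a member of $r_{k+1}[B,T]^*$.  In Case (a), the forcing additionally demands that the coding-node slot of $U^p_\alpha$ realize the passing-number pattern over the coding nodes in $D$ prescribed by the strong similarity type of $\bS$.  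Conditions are ordered by extending $F_p$ and by coherently refining the level choices upward in $T$.

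By density and strengthening, each condition may be assumed to decide the $h$-color of the member of $r_{k+1}[B,T]^*$ determined by each $\dot{U}_\alpha$.  I would then color finite subsets of $\kappa$ by the joint combinatorial type of the underlying nodes (their tree-relationships in $T$) together with the forced color pattern.  Since this coloring has only countably many values, a suitable application of \Erdos-Rado produces a homogeneous subset $K\sse\kappa$ of cardinality $\aleph_1$.  Standard pressing-down on $K$ extracts countably many pairwise compatible ground-model conditions whose $U^p_\alpha$'s all yield the same color $i\in 2$, giving a monochromatic family of extensions of $\max(B)$ arbitrarily high in $T$.

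The tree $S\in[D,T]$ is then constructed by a fusion argument: starting with $D$, one iterates the above step level by level through $T$, applying it with all the $B$'s that arise above the portion of $S$ already chosen, to refine $T$ so that all $(k+1)$-level configurations over the relevant $B$'s have color $i$, while preserving the strong similarity type at every stage.  The main obstacle will be verifying the density conditions required for the \Erdos-Rado step, especially in Case (a):  the coding-node slot of $U^p_\alpha$ must realize a prescribed passing-number pattern over the coding nodes of $D$, and one must know that such slots occur cofinally in $T$.  This is where the hypothesis $T\ssim\bS$ combined with the universality of the Rado graph is used: every finite pattern of edges and non-edges to a finite vertex set occurs infinitely often in $\bR$, and its tree-theoretic shadow therefore occurs cofinally in $T$.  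Case (b) requires extra care because $A$ need not belong to $\mathcal{AT}$, so the forcing must treat $\max(A)$ abstractly as a level set in $\widehat{\mathcal{AT}}(T)$ rather than as the max-level of a member of $\mathcal{AT}$.
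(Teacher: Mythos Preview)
Your outline follows the right template---Harrington-style forcing plus \Erdos--Rado plus fusion---but the forcing you describe has a structural gap that prevents the fusion step from closing.

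In your forcing, each ordinal $\alpha<\kappa$ indexes a single \emph{bundle} $U^p_\alpha$: one node extending each member of $\max(B)$.  After \Erdos--Rado and pressing down you obtain many pairwise compatible conditions, each certifying that its own bundle has color $\varepsilon^*$.  But at stage $m_j$ of the fusion you must produce a level $Y$ end-extending the current level $Z$ so that \emph{every} choice of one node of $Y$ above each $s_i$ lies in color $\varepsilon^*$.  The number of such choices is $\prod_{i<d}|Z_i|$, and a generic bundle $U^r_\alpha$ gives you only one of them.  Compatibility of the conditions $p_\alpha$ says nothing about level sets obtained by \emph{mixing} the $i$-th node of $U^r_\alpha$ with the $j$-th node of $U^r_\beta$ for $\alpha\ne\beta$; those mixed configurations are exactly what you must control, and your forcing never names them.

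The paper's fix is to make the forcing a product over the slots rather than over bundles: for each non-coding slot $i<d$ it adds $\kappa$ many generic branches through $T_i=\{t\in T:t\supseteq s_i\}$, while the coding slot $s_d$ gets a \emph{single} generic branch (since each level of $T$ has a unique coding node).  A configuration is then indexed by a tuple $\vec\alpha\in[\kappa]^d$, and $X(p,\vec\alpha)=\{p(i,\alpha_i):i<d\}\cup\{p(d)\}$ is automatically a ``mixed'' level set.  The \Erdos--Rado coloring is on $[\kappa]^{2d}$ (not arbitrary finite subsets), tailored so that homogeneity forces all the $p_{\vec\alpha}$ for $\vec\alpha\in\prod_iK_i$ to be pairwise compatible with a common color $\varepsilon^*$.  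At the fusion step one then labels the nodes of $Z_i$ by a finite $J_i\sse K_i$ and takes a single condition below all $p_{\vec\alpha}$, $\vec\alpha\in\prod_iJ_i$, which simultaneously decides every mixed choice.  Your proposal is missing precisely this product-over-slots structure and the corresponding $2d$-ary \Erdos--Rado step; without them the fusion cannot guarantee monochromaticity on all of $r_{k+1}[B,S]^*$.
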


\begin{proof}
Assume the hypotheses.
Given $U\in \mathcal{AT}\cup \mathcal{T}$ with $U\sse T$, define
\begin{equation}
\Ext_U(B)=\{\max(C):C\in r_{k+1}[B,T]^*\mathrm{\ and \ } C\sse U\}.
\end{equation}
The coloring $h$ induces a coloring $h': \Ext_T(B)\ra 2$ by  defining $h'(X)=h(C_X)$, where
$C_X$ is the member of $\mathcal{AT}$
 induced by the meet-closure of $X$.

Let $d+1$ be the number of  nodes in  $\max(B)$,
and  fix an enumeration   $s_0,\dots, s_d$  of
 the nodes in    $\max(B)$
 with the property that  for any
 $X\in \Ext_T(B) $,
the coding node in $X$ extends $s_d$.
 Note that  in both Cases (a) and (b), $d+1=2^k$, as any $C\in\mathcal{AT}_{k+1}$ has $2^k$ maximal nodes.
Let $L$ denote the collection of all  $l<\om$  for which  there is a member of
 $\Ext_T(B)$ with  nodes of length $l$.

For
  $i\le d$,   let  $T_i=\{t\in T:t\contains s_i\}$.
Let $\kappa=\beth_{2d}$, so that the partition relation $\kappa\ra (\aleph_1)^{2d}_{\aleph_0}$ holds by the \Erdos-Rado Theorem \ref{thm.ER}.
The following forcing notion $\bP$    adds $\kappa$ many paths through  each  $T_i$,  $i< d$,
and one path  through $T_d$.
However, as our goal is to find a tree $S\in [D,T]$ for which $h$ is monochromatic on $r_{k+1}[B,S]^*$,
the forcing will be applied in finite increments to construct $S$, without ever moving to a generic extension.
\vskip.1in

Define $\bP$ to consist of  the set of  finite   functions $p$
of the form
$$
p:(d\times\vec{\delta}_p)\cup\{d\}\ra \bigcup_{i\le d} T_i\re l_p,
$$
where $\vec{\delta}_p\in[\kappa]^{<\om}$,
 $l_p\in L$,
 $\{p(i,\delta) : \delta\in  \vec{\delta}_p\}\sse  T_i\re l_p$ for each $i<d$,  and
$p(d)$ is {\em the} coding  node in $T\re l_p$ extending $s_d$.
The partial
 ordering on $\bP$  is defined as follows:
$q\le p$ if and only if
$l_q\ge l_p$, $\vec{\delta}_q\contains \vec{\delta}_p$,
$q(d)\contains p(d)$,
and
$q(i,\delta)\contains p(i,\delta)$ for each $(i,\delta)\in d\times \vec{\delta}_p$.

Given $p\in\bP$,
 the {\em range of $p$} is
$$
\ran(p)=\{p(i,\delta):(i,\delta)\in d\times \vec\delta_p\}\cup \{p(d)\}.
$$
If  $q\in \bP$ and $\vec{\delta}_p\sse \vec{\delta}_q$, define
$$
\ran(q\re \vec{\delta}_p)=\{q(i,\delta):(i,\delta)\in d\times \vec{\delta}_p\}\cup \{q(d)\}.
$$
Thus, $q\le p$ if and only if $\vec\delta_q\contains \vec\delta_p$ and $\ran(q\re\vec\delta_p)$ end-extends $\ran(p)$.

For  $(i,\al)\in d\times \kappa$,
 let
\begin{equation}
 \dot{b}_{i,\al}=\{\lgl p(i,\al),p\rgl:p\in \bP\mathrm{\  and \ }\al\in\vec{\delta}_p\},
\end{equation}
 a $\bP$-name for the $\al$-th generic branch through $T_i$.
Let
\begin{equation}
\dot{b}_d=\{\lgl p(d),p\rgl:p\in\bP\},
\end{equation}
a $\bP$-name for the generic branch through $T_d$.
 Given a generic filter   $G\sse \bP$, notice that
 $\dot{b}_d^G=\{p(d):p\in G\}$,
 which  is a cofinal path of coding  nodes in $T_d$.
Let $\dot{L}_d$ be a $\bP$-name for the set of lengths of coding  nodes in $\dot{b}_d$, and note that
$\bP$ forces  that $\dot{L}_d\sse L$.
Let $\dot{\mathcal{U}}$ be a $\bP$-name for a non-principal ultrafilter on $\dot{L}_d$.
Given  $p\in \bP$, notice  that
\begin{equation}
 p\forces  \forall
 (i,\al)\in d\times \vec\delta_p\, (\dot{b}_{i,\al}\re l_p= p(i,\al)) \wedge
( \dot{b}_d\re l_p=p(d)).
\end{equation}

We will write sets $\{\al_i:i< d\}$ in $[\kappa]^d$ as vectors $\vec{\al}=\lgl \al_0,\dots,\al_{d-1}\rgl$ in strictly increasing order.
For $\vec{\al}\in[\kappa]^d$,  let
\begin{equation}
\dot{b}_{\vec{\al}}=
\lgl \dot{b}_{0,\al_0},\dots, \dot{b}_{d-1,\al_{d-1}},\dot{b}_d\rgl.
\end{equation}
For  $l<\om$,
 let
 \begin{equation}
 \dot{b}_{\vec\al}\re l=
 \lgl \dot{b}_{0,\al_0}\re l,\dots, \dot{b}_{d-1,\al_{d-1}}\re l,\dot{b}_d\re l\rgl.
 \end{equation}
Using these abbreviations, one sees that
$h'$ is a coloring on level sets  of the form $\dot{b}_{\vec\al}\re l$
whenever this is
 forced to be a member of $\Ext_T(B)$.
Given $\vec{\al}\in [\kappa]^d$ and  $p\in \bP$ with $\vec\al\sse\vec{\delta}_p$,
let
\begin{equation}
X(p,\vec{\al})=\{p(i,\al_i):i<d\}\cup\{p(d)\}.
\end{equation}
Notice that
$X(p,\vec{\al})$
 is a member of
  $\Ext_T(B)$.

For each $\vec\al\in[\kappa]^d$,
choose a condition $p_{\vec{\al}}\in\bP$ satisfying the following:
\begin{enumerate}
\item
 $\vec{\al}\sse\vec{\delta}_{p_{\vec\al}}$.
\item
There is an $\varepsilon_{\vec{\al}}\in 2$
 such that
$p_{\vec{\al}}\forces$
``$h'(\dot{b}_{\vec{\al}}\re l)=\varepsilon_{\vec{\al}}$
for $\dot{\mathcal{U}}$ many $l$ in $\dot{L}_d$''.
\item
$h'(X(p_{\vec\al},\vec{\al}))=\varepsilon_{\vec{\al}}$.
\end{enumerate}
Such conditions can be found as follows:
Fix some $\tilde{X}\in \Ext_T(B)$ and let $t_i$ denote the node in $\tilde{X}$ extending $s_i$, for each $i\le d$.
For $\vec{\al}\in[\kappa]^d$,  define
$$
p^0_{\vec{\al}}=\{\lgl (i,\delta), t_i\rgl: i< d, \ \delta\in\vec{\al} \}\cup\{\lgl d,t_d\rgl\}.
$$
Then
 (1) will  hold for all $p\le p^0_{\vec{\al}}$,
 since $\vec\delta_{p_{\vec\al}^0}= \vec\al$.
 Next,
let  $p^1_{\vec{\al}}$ be a condition below  $p^0_{\vec{\al}}$ which
forces  $h(\dot{b}_{\vec{\al}}\re l)$ to be the same value for
$\dot{\mathcal{U}}$  many  $l\in \dot{L}_d$.
Extend this to some condition
 $p^2_{\vec{\al}}\le p_{\vec{\al}}^1$
 which
 decides a value $\varepsilon_{\vec{\al}}\in 2$
 so that
 $p^2_{\vec{\al}}$ forces
  $h'(\dot{b}_{\vec{\al}}\re l)=\varepsilon_{\vec{\al}}$
for $\dot{\mathcal{U}}$ many $l$ in $\dot{L}_d$.
Then
 (2) holds for all $p\le p_{\vec\al}^2$.
If $ p_{\vec\al}^2$ satisfies (3), then let $p_{\vec\al}=p_{\vec\al}^2$.
Otherwise,
take  some  $p^3_{\vec\al}\le p^2_{\vec\al}$  which forces
$\dot{b}_{\vec\al}\re l\in \Ext_T(B)$ and
$h'(\dot{b}_{\vec\al}\re l)=\varepsilon_{\vec\al}$
for
some $l\in\dot{L}$
 with
$l_{p^2_{\vec\al}}< l\le l_{p^3_{\vec\al}}$.
Since $p^3_{\vec\al}$  forces  that $\dot{b}_{\vec\al}\re l$ equals
$\{p^3_{\vec\al}(i,\al_i)\re l:i<d\}\cup \{p^3_{\vec\al}(d)\re l\}$,
which is exactly
$X(p^3_{\vec\al}\re l,\vec\al)$,
and this level set is in the ground model, it follows that $h'(X(p^3_{\vec\al}\re l,\vec\al))
=\varepsilon_{\vec\al}$.
Let
$p_{\vec\al}$ be $p^3_{\vec\al}\re l$.
Then $p_{\vec\al}$ satisfies (1)-(3).

Let $\mathcal{I}$ denote the collection of all functions $\iota: 2d\ra 2d$ such that
for each $i<d$,
$\{\iota(2i),\iota(2i+1)\}\sse \{2i,2i+1\}$.
For $\vec{\theta}=\lgl \theta_0,\dots,\theta_{2d-1}\rgl\in[\kappa]^{2d}$,
$\iota(\vec{\theta}\,)$ determines the pair of sequences of ordinals $\lgl \iota_e(\vec{\theta}\,),\iota_o(\vec{\theta}\,)\rgl$, where
\begin{align}
\iota_e(\vec{\theta}\,)&=
\lgl \theta_{\iota(0)},\theta_{\iota(2)},\dots,\theta_{\iota(2d-2))}\rgl\cr
\iota_o(\vec{\theta}\,)&=
 \lgl\theta_{\iota(1)},\theta_{\iota(3)},\dots,\theta_{\iota(2d-1)}\rgl.
 \end{align}

We now proceed to  define a coloring  $f$ on
$[\kappa]^{2d}$ into countably many colors.
Let $\vec{\delta}_{\vec\al}$ denote $\vec\delta_{p_{\vec\al}}$,
 $k_{\vec{\al}}$ denote $|\vec{\delta}_{\vec\al}|$,
$l_{\vec{\al}}$ denote  $l_{p_{\vec\al}}$, and let $\lgl \delta_{\vec{\al}}(j):j<k_{\vec{\al}}\rgl$
denote the enumeration of $\vec{\delta}_{\vec\al}$
in increasing order.
Given  $\vec\theta\in[\kappa]^{2d}$ and
 $\iota\in\mathcal{I}$,   to reduce subscripts
 let
$\vec\al$ denote $\iota_e(\vec\theta\,)$ and $\vec\beta$ denote $\iota_o(\vec\theta\,)$, and
define
\begin{align}\label{eq.fiotatheta}
f(\iota,\vec\theta\,)= \,
&\lgl \iota, \varepsilon_{\vec{\al}}, k_{\vec{\al}}, p_{\vec{\al}}(d),
\lgl \lgl p_{\vec{\al}}(i,\delta_{\vec{\al}}(j)):j<k_{\vec{\al}}\rgl:i< d\rgl,\cr
& \lgl  \lgl i,j \rgl: i< d,\ j<k_{\vec{\al}},\ \mathrm{and\ } \delta_{\vec{\al}}(j)=\al_i \rgl,\cr
&\lgl \lgl j,k\rgl:j<k_{\vec{\al}},\ k<k_{\vec{\beta}},\ \delta_{\vec{\al}}(j)=\delta_{\vec{\beta}}(k)\rgl\rgl.
\end{align}
Fix some ordering of $\mathcal{I}$ and define
\begin{equation}
f(\vec{\theta}\,)=\lgl f(\iota,\vec\theta\,):\iota\in\mathcal{I}\rgl.
\end{equation}

By the \Erdos-Rado Theorem  \ref{thm.ER},  there is a subset $K\sse\kappa$ of cardinality $\aleph_1$
which is homogeneous for $f$.
Take $K'\sse K$ so that between each two members of $K'$ there is a member of $K$.
Given  sets of ordinals $I$ and $J$,  we write $I<J$  to mean that  every member of $I$ is less than every member of $J$.
Take  $K_i\sse K'$  be  countably infinite subsets
satisfying
  $K_0<\dots<K_{d-1}$.
The next four lemmas  are almost verbatim the Claims 3 and 4 and Lemma 5.3
in \cite{DobrinenJML20}, with small necessary changes being made.
The proofs are included here for the reader's convenience.

Fix some  $\vec\gamma\in \prod_{i<d}K_i$, and define
\begin{align}\label{eq.star}
&\varepsilon^*=\varepsilon_{\vec\gamma},\ \
k^*=k_{\vec\gamma},\ \
t_d=p_{\vec\gamma}(d),\cr
t_{i,j}&=p_{\vec{\gamma}}(i,\delta_{\vec{\gamma}}(j))\mathrm{\ for\ }
i<d,\ j<k^*.
\end{align}
We show that the values  in equation (\ref{eq.star}) are the same for any choice of
$\vec\gamma$.

\begin{lem}\label{lem.onetypes}
 For all $\vec{\al}\in \prod_{i<d}K_i$,
 $\varepsilon_{\vec{\al}}=\varepsilon^*$,
$k_{\vec\al}=k^*$,  $p_{\vec{\al}}(d)=t_d$, and
$\lgl p_{\vec\al}(i,\delta_{\vec\al}(j)):j<k_{\vec\al}\rgl
=
 \lgl t_{i,j}: j<k^*\rgl$ for each $i< d$.
\end{lem}

\begin{proof}
Let
 $\vec{\al}$ be any member of $\prod_{i<d}K_i$, and let $\vec{\gamma}$ be the set of ordinals fixed above.
Take  $\iota\in \mathcal{I}$
to be the identity function on $2d$.
Then
there are $\vec\theta,\vec\theta'\in [K]^{2d}$
such that
$\vec\al=\iota_e(\vec\theta\,)$ and $\vec\gamma=\iota_e(\vec\theta'\,)$.
Since $f(\iota,\vec\theta\,)=f(\iota,\vec\theta'\,)$,
it follows that $\varepsilon_{\vec\al}=\varepsilon_{\vec\gamma}$, $k_{\vec{\al}}=k_{\vec{\gamma}}$, $p_{\vec{\al}}(d)=p_{\vec{\gamma}}(d)$,
and $\lgl \lgl p_{\vec{\al}}(i,\delta_{\vec{\al}}(j)):j<k_{\vec{\al}}\rgl:i< d\rgl
=
\lgl \lgl p_{\vec{\gamma}}(i,\delta_{\vec{\gamma}}(j)):j<k_{\vec{\gamma}}\rgl:i< d\rgl$.
\end{proof}

Let $l^*$ denote the length of the  node $t_d$, and notice that
the  node
 $t_{i,j}$ also has length $l^*$,  for each   $(i,j)\in d\times k^*$.

\begin{lem}\label{lem.j=j'}
Given any $\vec\al,\vec\beta\in \prod_{i<d}K_i$,
if $j,k<k^*$ and $\delta_{\vec\al}(j)=\delta_{\vec\beta}(k)$,
 then $j=k$.
\end{lem}

\begin{proof}
Let $\vec\al,\vec\beta$ be members of $\prod_{i<d}K_i$   and suppose that
 $\delta_{\vec\al}(j)=\delta_{\vec\beta}(k)$ for some $j,k<k^*$.
For  $i<d$, let  $\rho_i$ be the relation from among $\{<,=,>\}$ such that
 $\al_i\,\rho_i\,\beta_i$.
Let   $\iota$ be the member of  $\mathcal{I}$  such that for each $\vec\theta\in[K]^{2d}$ and each $i<d$,
$\theta_{\iota(2i)}\ \rho_i \ \theta_{\iota(2i+1)}$.
Fix some
$\vec\theta\in[K']^{2d}$ such that
$\iota_e(\vec\theta)=\vec\al$ and $\iota_o(\vec\theta)= \vec\beta$.
Since between any two members of $K'$ there is a member of $K$, there is a
 $\vec\zeta\in[K]^{d}$ such that  for each $i< d$,
 $\al_i\,\rho_i\,\zeta_i$ and $\zeta_i\,\rho_i\, \beta_i$.
Let   $\vec\mu,\vec\nu$ be members of $[K]^{2d}$ such that $\iota_e(\vec\mu)=\vec\al$,
$\iota_o(\vec\mu)=\vec\zeta$,
$\iota_e(\vec\nu)=\vec\zeta$, and $\iota_o(\vec\nu)=\vec\beta$.
Since $\delta_{\vec\al}(j)=\delta_{\vec\beta}(k)$,
the pair $\lgl j,k\rgl$ is in the last sequence in  $f(\iota,\vec\theta)$.
Since $f(\iota,\vec\mu)=f(\iota,\vec\nu)=f(\iota,\vec\theta)$,
also $\lgl j,k\rgl$ is in the last  sequence in  $f(\iota,\vec\mu)$ and $f(\iota,\vec\nu)$.
It follows that $\delta_{\vec\al}(j)=\delta_{\vec\zeta}(k)$ and $\delta_{\vec\zeta}(j)=\delta_{\vec\beta}(k)$.
Hence, $\delta_{\vec\zeta}(j)=\delta_{\vec\zeta}(k)$,
and therefore $j$ must equal $k$.
\end{proof}

For each $\vec\al\in \prod_{i<d}K_i$, given any   $\iota\in\mathcal{I}$, there is a $\vec\theta\in[K]^{2d}$ such that $\vec\al=\iota_o(\vec\al)$.
By the second line of equation  (\ref{eq.fiotatheta}),
there is a strictly increasing sequence
$\lgl j_i:i< d\rgl$  of members of $k^*$ such that
$\delta_{\vec\gamma}(j_i)=\al_i$.
By
homogeneity of $f$,
this sequence $\lgl j_i:i< d\rgl$  is the same for all members of $\prod_{i<d}K_i$.
Then letting
 $t^*_i$ denote $t_{i,j_i}$,
 one sees that
\begin{equation}
p_{\vec\al}(i,\al_i)=p_{\vec{\al}}(i, \delta_{\vec\al}(j_i))=t_{i,j_i}=t^*_i.
\end{equation}
Let $t_d^*$ denote $t_d$.

\begin{lem}\label{lem.compat}
For any finite subset $\vec{J}\sse \prod_{i<d}K_i$,
$p_{\vec{J}}:=\bigcup\{p_{\vec{\al}}:\vec{\al}\in \vec{J}\,\}$
is a member of $\bP$ which is below each
$p_{\vec{\al}}$, $\vec\al\in\vec{J}$.
\end{lem}

\begin{proof}
Given  $\vec\al,\vec\beta\in \vec{J}$,
if
 $j,k<k^*$ and
 $\delta_{\vec\al}(j)=\delta_{\vec\beta}(k)$, then
 $j$ and $k$ must be equal, by
 Lemma  \ref{lem.j=j'}.
Then  Lemma \ref{lem.onetypes} implies
that for each $i<d$,
\begin{equation}
p_{\vec\al}(i,\delta_{\vec\al}(j))=t_{i,j}=p_{\vec\beta}(i,\delta_{\vec\beta}(j))
=p_{\vec\beta}(i,\delta_{\vec\beta}(k)).
\end{equation}
Hence,
 for all
$\delta\in\vec{\delta}_{\vec\al}\cap
\vec{\delta}_{\vec\beta}$
and  $i<d$,
$p_{\vec\al}(i,\delta)=p_{\vec\beta}(i,\delta)$.
Thus,
$p_{\vec{J}}:=
\bigcup \{p_{\vec{\al}}:\vec\al\in\vec{J}\}$
is a  function with domain $\vec\delta_{\vec{J}}\cup\{d\}$, where
$\vec\delta_{\vec{J}}=
\bigcup\{
\vec{\delta}_{\vec\al}:
\vec\al\in\vec{J}\,\}$.
Thus, $p_{\vec{J}}$ is a member of $\bP$.
Since
for each $\vec\al\in\vec{J}$,
$\ran(p_{\vec{J}}\re \vec{\delta}_{\vec\al})=\ran(p_{\vec\al})$,
it follows that
$p_{\vec{J}}\le p_{\vec\al}$ for each $\vec\al\in\vec{J}$.
\end{proof}

Now
we build an  sRc  tree $S\in [D,T]$ so that  the coloring $h$ will be  monochromatic on $r_{k+1}[B,S]^*$.
Recall that
$D=r_n(T)$.
Let $M=\{ m_j:j<\om\}$ be the strictly increasing enumeration of those integers $m> n$
such that
for each $F\in r_{m}[D,T]$, the coding node  in $\max(F)$  extends $s_d$.
The integers in $M$  represent the stages at which we  will use the forcing to find the next level  of $S$ so that the members of $r_{k+1}[B,S]^*$ will have the same $h$-color.

For each $i\le d$,
extend the node $s_i\in B$ to the node $t^*_i$.
Then extend each node
 in $\max(D^+)\setminus B$  to a node in $T\re l^*$.
 If one wishes to be concrete, take the leftmost extensions in $T$;   how the nodes in
$\max(D^+)\setminus B$ are extended makes no difference to the conclusion of the theorem.
 Set
\begin{equation}
U^*=\{t^*_i:i\le d\}\cup\{u^*:u\in D^+\setminus B\}.
\end{equation}
$U^*$
end-extends   $\max(D^+)$.
If $m_0=n+1$,
then $D\cup U^*$ is a member of $r_{m_0}[D,T]$.
In
 this case, let $U_{m_0}=D\cup U^*$, and
 let $U_{m_1-1}$ be any member of $r_{m_1-1}[U_{m_0},T]$.
 Notice that $U^*$ is  the only member of $\Ext_{U_{m_1-1}}(B)$, and it has $h'$-color $\varepsilon^*$.

Otherwise,
 $m_0>n+1$.
 In this case,
 take some  $U_{m_0-1}\in r_{m_0-1}[D,T]$ such that  $\max(U_{m+1})$ end-extends $U^*$,
 and
 notice that   $\Ext_{U_{m_0-1}}(B)$ is empty.
Now assume that  $j<\om$ and
 we have constructed $U_{m_j-1}\in r_{m_j-1}[D,T]$
  so that every member of $\Ext_{U_{m_j-1}}(B)$
 has $h'$-color $\varepsilon^*$.
Fix some  $V\in r_{m_j}[U_{m_j -1} ,T]$ and let $Z=\max(V)$.
We will extend the nodes in $Z$  to construct
$U_{m_j}\in r_{m_j}[U_{m_j-1},T]$ which is homogeneous for $h'$ in value $\varepsilon^*$.
This is done  by constructing
 the condition $q$, below, and then extending it to some $r \le q$ which decides all members of $\Ext_T(B)$ coming from the nodes in $\ran(r)$ have $h'$-color $\varepsilon^*$.

Let $q(d)$ denote  the coding  node in $Z$ and let $l_q=|q(d)|$.
For each $i<d$,
let  $Z_i$ denote the set of nodes in $Z\cap T_i$; this set  has  $2^{m_j-1}$ many nodes.
For each $i<d$,
take  a set $J_i\sse K_i$ of cardinality $2^{m_j-1}$
and label the members of $Z_i$ as
$\{z_{\al}:\al\in J_i\}$.
Let $\vec{J}$ denote $\prod_{i<d}J_i$.
By   Lemma \ref{lem.compat},
the set $\{p_{\vec\al}:\vec\al\in\vec{J}\}$ is compatible, as evidenced by the fact that
$p_{\vec{J}}:=\bigcup\{p_{\vec\al}:\vec\al\in\vec{J}\}$ is a condition in $\bP$.

Let
 $\vec{\delta}_q=\bigcup\{\vec{\delta}_{\vec\al}:\vec\al\in \vec{J}\}$.
For $i<d$ and $\al\in J_i$,
define $q(i,\al)=z_{\al}$.
It follows  that for each
$\vec\al\in \vec{J}$ and $i<d$,
\begin{equation}
q(i,\al_i)\contains t^*_i=p_{\vec\al}(i,\al_i)=p_{\vec{J}}(i,\al_i),
\end{equation}
and
\begin{equation}
q(d)\contains t^*_d=p_{\vec\al}(d)=p_{\vec{J}}(d).
\end{equation}
For   $i<d$ and $\delta\in\vec{\delta}_q\setminus
J_i$,
let $q(i,\delta)$ be the leftmost extension
 of $p_{\vec{J}}(i,\delta)$ in $T$ of length $l_q$.
Define
\begin{equation}
q=\{q(d)\}\cup \{\lgl (i,\delta),q(i,\delta)\rgl: i<d,\  \delta\in \vec{\delta}_q\}.
\end{equation}
This $q$ is a condition in $\bP$, and $q\le p_{\vec{J}}$.

To construct $U_{m_j}$,
take an $r\le q$ in  $\bP$ which  decides some $l_j$ in $\dot{L}_d$ for which   $h'(\dot{b}_{\vec\al}\re l_j)=\varepsilon^*$, for all $\vec\al\in\vec{J}$.
This is possible since for all $\vec\al\in\vec{J}$,
$p_{\vec\al}$ forces $h'(\dot{b}_{\vec\al}\re l)=\varepsilon^*$ for $\dot{\mathcal{U}}$ many $l\in \dot{L}_d$.
By the same argument as in creating the conditions $p_{\vec\al}$ to satisfy (3),
 we may assume that
 the nodes in the image of $r$ have length  $l_j$.
Since
$r$ forces $\dot{b}_{\vec{\al}}\re l_j=X(r,\vec\al)$
for each $\vec\al\in \vec{J}$,
and since the coloring $h'$ is defined in the ground model,
it follows that
$h'(X(r,\vec\al))=\varepsilon^*$ for each $\vec\al\in \vec{J}$.
Let $Y$ be the level set
consisting of the nodes $\{r(d)\}\cup \{r(i,\al):i<d,\ \al\in J_i\}$ along with
 a unique  node  $y_z$ in $T\re l_j$ extending  $z$, for each
 $z\in
Z\setminus (\{r(d)\}\cup \{r(i,\al):i<d,\ \al\in J_i\})$.
Then $Y$ end-extends $Z$.
Letting $U_{m_j}=U_{m_j-1}\cup Y$,
we see that $U_{m_j}$ is a member of $r_{m_j}[U_{m_j-1},T]$ such that
$h'$ has value $\varepsilon^*$ on $\Ext_{U_{m_j}}(B)$.
Let $U_{m_{j+1}-1}$ be any member of
$r_{m_{j+1}-1}[U_{m_j},T]$.
This completes the inductive construction.

Let $S=\bigcup_{j<\om}U_{m_j}$.
Then $S$ is a member of $[D,T]$ and
 for each $X\in\Ext_{S}(B)$,  $h'(X)=\varepsilon^*$.
Thus, $S$ satisfies the theorem.
\end{proof}

Corollary \ref{cor.A.4} follows immediately from  Theorem \ref{thm.matrixHL};
Case (a) handles  $k\ge 1$ and Case (b) handles $k=0$.
For the reader  interested in topological Ramsey spaces, we point out that this corollary states that
Axiom \bf A.4 \rm of Todorcevic's Axioms in Chapter 5, Section 1 of \cite{TodorcevicBK10}
holds for the space $\mathcal{T}$ of strong Rado coding trees.

\begin{cor}\label{cor.A.4}
Let $k<\om$, $A\in\mathcal{AT}_k$,  and  $T\in\mathcal{T}$ be given with $A\sse T$, and let $n=\depth_T(A)$.
For any subset $\mathcal{O}\sse r_{k+1}[A,T]$,
there is an $S\in [r_n(T),T]$ such that either
$\mathcal{O}\sse r_{k+1}[A,S]$ or else
$\mathcal{O}\cap
 r_{k+1}[A,S]=\emptyset$.
\end{cor}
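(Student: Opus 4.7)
The plan is to deduce the corollary as a routine translation of Theorem~\ref{thm.matrixHL}, by using the characteristic function of $\mathcal{O}$ as the $2$-coloring. Given the data $k, A, T, n, \mathcal{O}$ of the corollary, define
\begin{equation*}
h(C) = \begin{cases} 0 & \text{if } C \in \mathcal{O}, \\ 1 & \text{otherwise.} \end{cases}
\end{equation*}
The aim is to feed $h$ into Theorem~\ref{thm.matrixHL}, pull back the monochromatic $S \in [r_n(T), T]$ it produces, and read off the two possible outcomes as the two alternatives in the corollary.

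For $k \ge 1$, the plan is to invoke Case~(a) of the hypotheses of Theorem~\ref{thm.matrixHL} with the same $k$, $A$, $T$, and with $D := r_n(T)$ and $B := A^+$. The condition $n = \depth_T(A)$ guarantees $\max(A) \subseteq \max(D)$, so the hypothesis is satisfied. The theorem then returns an $S \in [D,T] = [r_n(T),T]$ on which $h$ is monochromatic on $r_{k+1}[A^+, S]^*$. The small bookkeeping step is to identify $r_{k+1}[A^+, S]^* = r_{k+1}[A, S]$: since $C \in \mathcal{AT}_{k+1}(S)$ is an honest approximation of a strong Rado coding tree, the condition $\max(C) \sqsupseteq \max(A^+)$ on a sufficiently wide level set uniquely recovers the meet-closure down through the first $k$ coding-node levels, which forces $r_k(C) = A$; the converse inclusion is immediate, since any $C \in r_{k+1}[A,S]$ has its maximal level-set end-extending each immediate successor in $\widehat{T}$ of every node in $\max(A)$. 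Once the two sets are identified, the monochromaticity of $h$ on $r_{k+1}[A,S]$ gives exactly the dichotomy asserted.

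For $k = 0$, we have $A = r_0(T) = \emptyset$, and Case~(a) is not directly available. The plan is to apply Case~(b) instead, choosing a one-node initial segment $A_{\mathrm{Thm}} \subset T$ of length below $l^T_0$ (together with its single one-step extension $B_{\mathrm{Thm}}$ in $\widehat{T}$), so that the resulting instance of Case~(b) has $k_{\mathrm{Thm}} = 0$ and the resulting set $r_1[B_{\mathrm{Thm}}, T]^*$ coincides with $r_1[\emptyset, T]$ (since every approximation in $\mathcal{AT}_1(T)$ automatically passes through the stem below $c^T_0$). The same reading-off of the monochromatic value then produces the desired $S$.

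The only genuinely non-trivial work in either case is the set-theoretic identification of $r_{k+1}[A, S]$ with the set $r_{k+1}[B, S]^*$ that appears in the conclusion of Theorem~\ref{thm.matrixHL}; this is an unpacking of Definitions of $A^+$, $[B,T]^*$, and $\mathcal{AT}_{k+1}$, not a new Ramsey-theoretic argument. Since the hard content — the construction of $S$ by the forcing/Erd\H{o}s--Rado argument — already lives inside Theorem~\ref{thm.matrixHL}, the corollary itself requires no further pigeonhole step, and there is no serious obstacle beyond verifying this translation.
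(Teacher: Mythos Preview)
Your approach is exactly the one the paper has in mind: define $h$ as the characteristic function of $\mathcal{O}$ and read off the dichotomy from Theorem~\ref{thm.matrixHL}, using Case~(a) for $k\ge 1$ and Case~(b) for $k=0$. The identification $r_{k+1}[A^+,S]^*=r_{k+1}[A,S]$ for $k\ge 1$ is correct and your sketch of it (via meets of the $2^k$ extensions recovering the lower levels of $A$) is fine.

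There is one slip in the $k=0$ case. You propose $A_{\mathrm{Thm}}$ to be a single node of length strictly below $l^T_0$. This choice does not satisfy the hypotheses of Theorem~\ref{thm.matrixHL}: the set $\widehat{\mathcal{AT}}(T)$ only contains truncations $A\rl l$ with $l\in L_T$, and the smallest such $l$ is $l^T_0$; moreover $\max(A_{\mathrm{Thm}})\sse\max(D)$ forces $l_{A_{\mathrm{Thm}}}$ to be a level of $T$. So no admissible $A_{\mathrm{Thm}}$ sits strictly below the root, and your claimed equality $r_1[B_{\mathrm{Thm}},T]^*=r_1[\emptyset,T]$ cannot be arranged this way. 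The repair is routine: take $A_{\mathrm{Thm}}=\{c^T_0\}$, $D=r_1(T)$, and $B_{\mathrm{Thm}}$ with $\max(B_{\mathrm{Thm}})=\{c^T_0{}^{\frown}i\}$ for one $i$. Theorem~\ref{thm.matrixHL} yields $S\in[r_1(T),T]$ with $h$ constant on $r_1[B_{\mathrm{Thm}},S]^*$; then pass to any $S'\in[\emptyset,S]$ whose root extends $c^T_0{}^{\frown}i$, so that $r_1[\emptyset,S']\sse r_1[B_{\mathrm{Thm}},S]^*$ and $S'\in[r_0(T),T]$ as required. With this small correction the argument goes through and matches the paper.
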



\section{Borel sets of  strong  Rado coding  trees are completely Ramsey}\label{sec.MainThm}

In this section we prove Theorem \ref{thmmain}:
Given a Rado graph $\bR=(\om,E)$,
Borel subsets of the space $\mathcal{T}_{\bR}$ of strong Rado coding trees are   completely  Ramsey (see Definition \ref{defn.CR}).
The proof entails showing  that the collection of completely Ramsey subsets of $\mathcal{T}_{\bR}$
is a $\sigma$-algebra containing
 all open sets.
Showing that open sets are completely Ramsey   is accomplished by induction on the rank of Nash-Williams families, collections of finite sets which  determine basic open sets.
However, since the space $\mathcal{T}_{\bR}$  does not possess the  same  amalgamation property that the Baire space has, we will need to do induction on open sets extending members of $\widehat{\mathcal{AT}}$.
This is the reason the  broader statement  in Theorem \ref{thm.matrixHL} was proved,
rather than simply Corollary \ref{cor.A.4}.
We will apply Theorem \ref{thmmain} in the next section to prove the Main Theorem.

As usual,
fix a Rado graph  $\mathbb{R}=(\om,E)$, and let  $\bS$ denote $\bS_{\bR}$ and $\mathcal{T}$ denote $\mathcal{T}_{\bR}$.
Recall  the definitions of
$\widehat{\mathcal{AT}}$ and
$\widehat{\mathcal{AT}}(T)$,
 in
equations  (\ref{eq.AThat}) and   (\ref{eq.AThatT}), respectively, as well as Definition \ref{defnhatAT}.
Given $B\in\widehat{\mathcal{AT}}$ and  $T\in\mathcal{T}$
an important property of the set $[B,T]^*$ is that
it is open in the Ellentuck topology on $\mathcal{T}$:
If $B$ is in $\mathcal{AT}_k$ for some $k$,
then  the set  $[B,T]^*$ is the union of
$[B,T]$ along with
 all $[C,T]$, where
$C \in\mathcal{AT}_k$ and $\max(C)$ end-extends $\max(B)$.
If $B$ is in $\widehat{\mathcal{AT}}$ but not in $\mathcal{AT}$,
then
 letting  $k$ be the least integer for which there is some $C\in\mathcal{AT}_k$  with $\max(C)\sqsupset \max(B)$,
 we see that
$[B,T]^*$ equals the union of  all $[C,T]$, where $C\in\mathcal{AT}_{k}$ and $C\sqsupseteq B$.
For the same reasons, the set $[B, \bS]^*$ is open in the  metric topology on $\mathcal{T}$.

\begin{defn}\label{defn.NWfamily}
A subset $\mathcal{F}\sse\mathcal{AT}$  is said to  have the {\em Nash-Williams property} if  for any two distinct members $F,G\in\mathcal{F}$,
neither is an initial segment of the other.
\end{defn}

A Nash-Williams family  $\mathcal{F}$ determines the  metrically open  set
\begin{equation}
\mathcal{O}_{\mathcal{F}}
=
\bigcup_{F\in\mathcal{F}}[F,\bS].
\end{equation}
Conversely,  to each open  set  $\mathcal{O}\sse\mathcal{T}$ in the metric topology there corresponds a Nash-Williams family
$\mathcal{F}(\mathcal{O})$ by defining
$F\in\mathcal{AT}$ to be a member of
$\mathcal{F}(\mathcal{O})$
if and only if
$[F,\bS]\sse \mathcal{O}$ and if $G$ is any proper initial segment of $F$, then $[G,\bS]\not\sse\mathcal{O}$.


Given $\mathcal{F}\sse \mathcal{AT}$ and
$B\in\widehat{\mathcal{AT}}$, define
\begin{equation}
\mathcal{F}_B=\{F\in\mathcal{F}:  \exists k\,(\max(r_k(F))\sqsupseteq \max(B))\}.
\end{equation}
In particular, if $\mathcal{F}\sse r[B,\bS]^*$, then $\mathcal{F}_B=\mathcal{F}$.
If  $\mathcal{F}$ is  a Nash-Williams family, then  $B\in\mathcal{F}$ if and only if $\mathcal{F}_B=\{B\}$.
Given $T\in\mathcal{T}$, let
\begin{equation}
\mathcal{F}|T=\{F\in\mathcal{F}:F\in\mathcal{AT}(T)\}.
\end{equation}
With this notation, notice that $\mathcal{F}_B|T=\mathcal{F}\cap r[B,T]^*$, for any $B\in\widehat{\mathcal{AT}}$.

For  $F\in\mathcal{AT}$,
recall from Subsection  \ref{subsec.tRs}
that
$|F|$ denotes the $k$ for which $F\in\mathcal{AT}_k$.
Given a set $\mathcal{F}\sse \mathcal{AT}$,
let
\begin{equation}
\tilde{\mathcal{F}}=\{r_k(F):F\in\mathcal{F}\mathrm{\ and\ }k\le |F|\},
\end{equation}
 and note that  $\tilde{\mathcal{F}}\sse\mathcal{AT}$.
If $\mathcal{F}$ is a Nash-Williams family, then  $\mathcal{F}$ consists of the $\sqsubseteq$-maximal members of $\tilde{\mathcal{F}}$.

\begin{defn}\label{defn.front}
Suppose $T\in\mathcal{T}$ and $B\in\widehat{\mathcal{AT}}(T)$.
We say that a family $\mathcal{F}\sse r[B,T]^*$ is a {\em front on $[B,T]^*$} if
$\mathcal{F}$ is a Nash-Williams family and
for each $S\in [B,T]^*$,
there is some $C\in\mathcal{F}$ such that $C\sqsubset S$.
\end{defn}

Notice that a front $\mathcal{F}$ on $[B,T]^*$ determines a collection of disjoint (Ellentuck) basic open sets $[C,T]$, $C\in\mathcal{F}$, whose union is exactly $[B,T]^*$.

\begin{assumption}\label{assumption.important}
Let $T\in\mathcal{T}$ be  fixed, and let $D=r_d(T)$ for some $d<\om$.
Given $A\in \widehat{\mathcal{AT}}(T)$ with $\max(A)\sse\max(D)$.
Recall that
 $A^+$ denotes the union of $A$  with the set of   immediate extensions in    $\widehat{T}$ of the members of $\max(A)$.
Let $B$ be a member of $\widehat{\mathcal{AT}}$ such that
 $\max(A)\sqsubset \max(B)\sse\max(A^+)$.
We consider two  cases  for triples $(A,B,k)$:

\begin{enumerate}
\item[]
\begin{enumerate}
\item[\bf{Case (a).}]
$k\ge 1$,
$A\in \mathcal{AT}_k(T)$,
and
 $B=A^+$.
\end{enumerate}
\end{enumerate}

\begin{enumerate}
\item[]
\begin{enumerate}
\item[\bf{Case (b).}]
$A$ has at least one node, and
each member of $\max(A)$ has exactly one extension in $B$.
Let $k$ be the integer satisfying $2^k=\mathrm{card}(\max(A))$.
\end{enumerate}
\end{enumerate}
\end{assumption}

\begin{thm}\label{thm.GalvinNW}
Given $T\in\mathcal{D}$, $(A,B,k)$, $d=\depth_T(A)$, and $D=r_d(T)$ as in  Assumption \ref{assumption.important},
let
$\mathcal{F}\sse r[B,T]^*$ be  a Nash-Williams family.
Then   there is an $S\in [D,T]$ such that
either $\mathcal{F}|S$ is a front on $[B,S]^*$
or else
 $\mathcal{F}|S=\emptyset$.
\end{thm}

\begin{proof}
Recall that  the notation $S\le T$ means that  $S\in [\emptyset, T]$.
We say that 
$S\le T$ {\em accepts} $C\in r[B,S]^*$ if  $\mathcal{F}_C| S$ is a front on $[C,S]$.
We say that 
$S$ {\em widely-rejects} (w-rejects) $a$ if either
\begin{enumerate}
\item[(a)]
$C\not\in r[B, S]^*$; or
\item[(b)]
$C\in  r[B, S]^*$ and $\forall P\in [\depth_S(C),S]\ \exists Q\in[C,P]\ \forall n(r_n(Q)\not\in\mathcal{F})$.
\end{enumerate}
We say that $S$ {\em decides} $C$ if either $S$ accepts $C$ or else $S$ w-rejects $C$.
For $n\in\om$,  let  $[n,S]$ denote $[r_n(S),S]$.

\begin{fact}\label{fact.1}
If $S$ accepts $C$, then so does each $P\le S$ with $C\in \mathcal{AD}(P)$.
If $S$ w-rejects $C$, then 
either $C\not\in r[B, S]^*$ and  every $P
\le S$ also w-rejects  $C$, or else $C\in r[B,S]^*$ and every  $P\in [\depth_S(C),S]$  w-rejects $C$.
\end{fact}

\begin{proof}
Suppose $S$ accepts $C$ and $P\le S$ with $C\in\mathcal{AD}(P)$.
Since $\mathcal{F}_C|S$ is a front on $[C,S]$, it follows that $\mathcal{F}_C|P$ is a front on $[C,P]$.
Hence $P$ accepts $C$.

Suppose $S$ w-rejects $C$.  If $C\not\in
r[B,S]^*$,
 then
also for each $P\le S$, 
 $C\not\in r[B,S]^*$  and 
hence
 $P$ w-rejects $C$.
Otherwise,  $C\in   r[B,S]^* $.
Let $n=\depth_S(C)$ and suppose
$P\in [n,S]$. 
Since  $S$ w-rejects $C$,   for each $Q\in [n,S]$   there is an $R\in [n,Q]$ such that for all $m$, $r_m(R)\not\in \mathcal{F}$.
Note that  $P\in [n,S]$ implies $[n,P]\sse[n,S]$; 
so 
 for each $Q\in [n,P]$ there is an $R\in [a,Q]$ such that  for all $m$, $r_m(Q)\not\in \mathcal{F}$. Therefore, $P$ w-rejects $C$.
\end{proof}

\begin{lem}\label{lem.A}
Given $C\in r[B,S]^*$  and $n=\depth_S(C)$,
either $\exists P\in [n,S]$ which  w-rejects $C$, or else $\forall P\in[n,S]\ \exists Q\in[n,P]$ which accepts $C$.
\end{lem}

\begin{proof}
Suppose there is no $P\in[n,S]$ which w-rejects $C$.
Then $\forall P\in[n,S]$, 
\begin{equation}
\exists Q\in [n,P]\ \forall X\in [C,Q]\ \exists m(r_m(X)\in \mathcal{F}).
\end{equation}
Thus,  for all $P\in [n,S]$ there is a $Q\in [n,P]$ such that $\mathcal{F}_C|Q$ is a front on $[C, Q]$;
that is, $Q$  accepts $C$. 
\end{proof}

\begin{fact}\label{fact.2}
\begin{enumerate}
\item[(a)]
For each $C\in r[B,T]^*$, there is an $S\in[\depth_T(C),T]$ which decides $C$.
\item[(b)]
 If $C\in r[B,T]^*$, then $S\in [B,T]^*$ with $C\in\mathcal{AD}(S)$ accepts $C$ if and only if $S$ accepts each $F\in r_{|C|+1}[C,S]$.
\end{enumerate}
\end{fact}

\begin{proof}
For (a),  let  $n=\depth_S(C)$.
By  Lemma \ref{lem.A}, either there is an $S\in [n,T]$ which w-rejects $C$,
or else there is an $S\in [n,T]$ which accepts $C$.

For (b), given the hypotheses,
$S$ accepts $C$ iff $\mathcal{F}_C|S$ is a front on $[C,S]$ iff for each $F\in r_{|C|+1}[C,S]$, 
$\mathcal{F}_C|S$ is a front on $[F,S]$
iff $S$ accepts each  $F\in r_{|C|+1}[C,S]$.
\end{proof}

Recall that $B\in\widehat{\mathcal{AD}}$, but is  not necessarily a member of $\mathcal{AD}$.
We shall say 
 that $S$ {\em accepts} $B$ if $S$ accepts
$F$ for all $F\in r_{k+1}[B,S]^*$.

\begin{fact}\label{fact.claim.1}
If $S\in[B,T]^*$  accepts 
$B$, then $\mathcal{F}_B|S$ is a front on $[B,S]^*$.
\end{fact}

\begin{proof}
For  each 
 $C\in r_{k+1}[B,S]^*$,   $S$ accepts $C$ implies that 
$\mathcal{F}_C|S$ is a front on $[C,S]$.
Since  $[B,S]^*=\bigcup\{[C,S]:C\in  r_{k+1}[B,S]^*\}$, it follows 
 that   $\mathcal{F}|S=\bigcup\{\mathcal{F}_C|S:C\in r_{k+1}[B,S]^*\}$, which is a front on $[B,S]^*$.
\end{proof}

\begin{lem}\label{lem.decides}
There is an $S\in[d,T]$ which 
decides each $C$ in $r[B,S]^*$.
\end{lem}

\begin{proof}
By finitely many applications of 
Fact \ref{fact.2}, 
we obtain a $T_1\in [d+1,T]$ such that $T_1$ decides each 
$C\in r[B,T]^*$  with 
 $C\sse r_{d+1}(T)$.
Given $T_i$,
by finitely many applications of 
Fact \ref{fact.2}, 
we obtain a $T_{i+1}\in [d+i+1,T_i]$ such that $T_{i+1}$ decides each 
$C\in r[B,T_i]^*$  with 
 $C \sse  r_{d+i+1}(T_i)$.
Let $S=\bigcup_{i=1}^{\infty} r_{d+i}(T_i)$, which is the same as  $\bigcup_{i=1}^{\infty} r_{d+i+1}(T_i)$.
Then $S\in [d,T]$
(in fact, $S\in[d+1,T]$)
 and 
for  $C\in r[B,S]^*$,
$T_i$ decides $C$, where $i$ is the least index satisfying
 $C  \sse  r_{d+i}(T_i)$.
Since $S\in [d+i,T_i]$, it follows that $S$ decides $C$ in the same way that $T_i$ does.
Thus, $S$ decides every  $C\in r[B,S]^*$.
\end{proof}

Now we finish the proof of the theorem. 
Take $S$ as in Lemma \ref{lem.decides} and define a coloring
 $f:r[B,S]^*\ra 2$ by 
$f(C)=0$ if $S$ accepts $C$ and $f(C)=1$ if $S$ w-rejects $C$.
By the Extended Pigeonhole Principle, Theorem \ref{thm.matrixHL}, there is a $P\in [d,S]$ for which $f$ is monochromatic on $r_{k+1}[B,P]^*$.
Now if $f$ has color $0$ on this set, then
$P$ accepts $B$ and 
 by Fact \ref{fact.claim.1},  $\mathcal{F}|P$ is a front on $[B,P]^*$.

Otherwise, $f$ has color $1$ on 
$r_{k+1}[B,P]^*$  so  $P$ w-rejects each member of $r_{k+1}[B,P]^*$. 
Let $P_0=P$.
Apply Theorem \ref{thm.matrixHL} finitely many (possibly $0$) times, to obtain some $P_{1}\in[d+1,P_0]$ such that 
for each $C \in r[B,P_{1}]^*$ with $C
\sse  r_{d+1}(P_{0})$,
all members of 
$r_{|C|+1}[C,P_{1}]^*$ have the same $f$-color.
Since such a $C$ is necessarily in $r_{k+1}[B,P_0]^*$ and  $P_0$ w-rejects $C$, Fact \ref{fact.2} implies that this $f$-color must be $1$.

For 
 $i\ge 1$, 
 we have the following  the induction hypothesis:
 $P_i\in[d+i,P_{i-1}]$  and 
for 
each   $C\in r[B,P_{i-1}]^*$ with $C\sse  r_{d+i}(P_{i-1})$,
$P_i$ w-rejects all members of $r_{|C|+1}[C,P_i]$.
Apply Theorem \ref{thm.matrixHL}
finitely many times to obtain a $P_{i+1}\in [d+i+1,P_i]$ such that $f$ is monochromatic on $r_{|C|+1}[C,P_{i+1}]$ for each $C\in r[B,P_i]^*$ with $C\sse  r_{d+i+1}(P_i)$.
Fix a $C\in r[B,P_i]^*$ with $C\sse r_{d+i+1}(P_i)$.
If  $|C|=k+1$ then $P_{i+1}$ w-rejects
 $C$, since $C\in r_{k+1}[B,P]^*$ and $P_{i+1}\in[B,P]^*$.
Suppose now that $|C|>k+1$.
By   the induction hypothesis, $P_i$ w-rejects $C$ since $C\in r_{|F|+1}[F,P_i]$, where 
$F=r_{|C|-1}(C)\sse r_{d+i}(P_{i-1})$.
Now if the  $f$-color on $r_{|C|+1}[C, P_{i+1}]$ is  $0$, then $P_{i+1}$ accepts $C$ by Fact \ref{fact.2}, a contradiction. 
Hence, $f$ has color $1$ on $r_{|C|+1}[C, P_{i+1}]$; 
in particular, $P_{i+1}$ w-rejects each member of $r_{|C|+1}[C, P_{i+1}]$.

Let $Q=\bigcup_{i=1}^{\infty} r_{d+i}(P_i)$.
Then $Q$ w-rejects each member of $r[B,Q]^*$.
By definition of w-rejects,
for each  $C\in  r[B,Q]^*$,
\begin{equation}\label{eq.dagger}
\forall R\in[\depth_Q(C),Q]\ 
\exists X\in[C,R]\ \forall n(r_n(X)\not\in\mathcal{F})
\end{equation}
Suppose toward a contradiction that there is an $C\in\mathcal{F}|Q$.
Then for all $X\in [C,Q]$, $r_{|C|}(X)=C\in\mathcal{F}$.
So  $Q\in [\depth_{Q}(C),Q]$  and   for all $X\in [C,Q]$, $\exists n(r_n(X)\in\mathcal{F})$.
But this contradicts  (\ref{eq.dagger}).
Thus $\mathcal{F}|Q$ must be empty.
\end{proof}


\begin{defn}\label{defn.CR}
Let  $\mathcal{X}$ be a subset of $\mathcal{T}$.
We say that $\mathcal{X}$ is {\em Ramsey}
if for each $T\in\mathcal{T}$ there is  an $S\le T$ such that either $\mathcal{X}\sse [\emptyset,S]$ or else $\mathcal{X}\cap  [\emptyset,S]=\emptyset$.
 $\mathcal{X}$ is said to be  {\em completely Ramsey (CR)}  if for each $C\in\mathcal{AT}$ and each $T\in \mathcal{T}$,
there is an $S\in [C,T]$ such that either $[C,S]\sse \mathcal{X}$ or else $[C,S]\cap\mathcal{X}=\emptyset$.
For this article, we introduce  additional terminology:
  $\mathcal{X}$  is {\em CR$^*$}
if for each quadruple $T,A,B,D$ as in Assumption \ref{assumption.important},
there is an $S\in [D,T]$ such that either $[B,S]^*\sse \mathcal{X}$ or else $[B,S]^*\cap\mathcal{X}=\emptyset$.
\end{defn}



\begin{rem}
Theorem \ref{thm.GalvinNW} 
 shows that metrically open sets are completely Ramsey.
Importantly, it proves  the stronger statement that metrically  open sets are
CR$^*$.
This
 stronger statement will be used to get around the lack of amalgamation  (Todorcevic's Axiom
 \bf A.3\rm(b))  for $(\mathcal{T},\le,r)$, to prove that Borel sets are completely Ramsey, and in fact, even CR$^*$.
\end{rem}

\begin{lem}\label{lem.complements}
Complements of CR$^*$ sets are CR$^*$.
\end{lem}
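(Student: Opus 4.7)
The plan is essentially immediate from the definition of CR$^*$, which is symmetric in $\mathcal{X}$ and its complement. Given a CR$^*$ set $\mathcal{X}\sse\mathcal{T}$ and a quadruple $T,A,B,D$ satisfying Assumption \ref{assumption.important}, I would apply the CR$^*$ property of $\mathcal{X}$ directly to obtain an $S\in[D,T]$ with either $[B,S]^*\sse\mathcal{X}$ or $[B,S]^*\cap\mathcal{X}=\emptyset$.

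Then I would observe that each of these two alternatives is exactly the opposite alternative for $\mathcal{T}\setminus\mathcal{X}$ on the same $[B,S]^*$: if $[B,S]^*\sse\mathcal{X}$, then $[B,S]^*\cap(\mathcal{T}\setminus\mathcal{X})=\emptyset$, and if $[B,S]^*\cap\mathcal{X}=\emptyset$, then $[B,S]^*\sse\mathcal{T}\setminus\mathcal{X}$. In either case, the same $S$ witnesses that $\mathcal{T}\setminus\mathcal{X}$ satisfies the CR$^*$ dichotomy for the quadruple $T,A,B,D$.

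There is no real obstacle here. The only thing to verify is that the quantifier structure of CR$^*$ truly is symmetric, which it is by inspection of Definition \ref{defn.CR}: the choice of $S$ depends only on the quadruple $T,A,B,D$ and on $\mathcal{X}$, and the dichotomy ``$[B,S]^*\sse\mathcal{X}$ or $[B,S]^*\cap\mathcal{X}=\emptyset$'' is formally identical under the substitution $\mathcal{X}\mapsto\mathcal{T}\setminus\mathcal{X}$. Since every quadruple satisfying Assumption \ref{assumption.important} is allowed and the witnessing $S$ for $\mathcal{X}$ is automatically a witnessing $S$ for $\mathcal{T}\setminus\mathcal{X}$, the lemma follows at once, without any fusion, induction on rank, or application of Theorem \ref{thm.matrixHL}.
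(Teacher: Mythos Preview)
Your proposal is correct and follows essentially the same argument as the paper: apply the CR$^*$ property of $\mathcal{X}$ to the given quadruple, then observe that the resulting dichotomy for $[B,S]^*$ immediately gives the corresponding dichotomy for the complement $\mathcal{T}\setminus\mathcal{X}$. There is nothing more to it.
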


\begin{proof}
Suppose $\mathcal{X}\sse \mathcal{T}$ is CR$^*$.
Given $T,B,D$ as in Assumption \ref{assumption.important},
 by definition of CR$^*$, there is an $S\in[D,T]$ such that either $[B,S]^*\sse\mathcal{X}$ or else
 $[B,S]^*\cap\mathcal{X}=\emptyset$.
Letting $\mathcal{Y}=\mathcal{T}\setminus \mathcal{T}$,
the complement of $\mathcal{X}$,
we see that either
$[B,S]^*\cap\mathcal{Y}=\emptyset$ or else $[B,S]^*\sse\mathcal{Y}$.
\end{proof}

In the rest of this section,
given $T\in\mathcal{T}$,  assume that
 $[\emptyset,T]$ inherits the
 subspace topology from  $\mathcal{T}$ with the metric topology.
 Thus, the basic metrically open sets of $[\emptyset,T]$ are of the form
 $[C,T]$, where $C\in \mathcal{T}(T)$.
The next two lemmas set up for Lemma \ref{lem.ctblU}
that countable unions of CR$^*$ sets are CR$^*$.

\begin{lem}\label{lem.GP8}
Suppose $\mathcal{X}\sse\mathcal{T}$ is CR$^*$.
Then for each $T\in\mathcal{T}$ and each $D\in\mathcal{AT}(T)$,
there is an $S\in [D,T]$ such that $\mathcal{X}\cap [\emptyset,S]$ is metrically  open   in $[\emptyset,S]$.
\end{lem}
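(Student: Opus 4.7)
The plan is to construct $S\in[D,T]$ by a fusion argument so that every ``starred'' neighborhood $[B,S]^*$ inside $S$ coming from Assumption~\ref{assumption.important} has been decided in advance to be either contained in $\mathcal{X}$ or disjoint from $\mathcal{X}$. Once this is done, showing that $\mathcal{X}\cap[\emptyset,S]$ is metrically open in $[\emptyset,S]$ will reduce to producing, for each $R\in\mathcal{X}\cap[\emptyset,S]$, a basic metric open neighborhood inside $\mathcal{X}$.

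Concretely, fix $T\in\mathcal{T}$ and $D=r_d(T)\in\mathcal{AT}(T)$. First I will enumerate the countably many pairs $(A,B)$ arising from Cases~(a) and~(b) of Assumption~\ref{assumption.important} with $A\in\widehat{\mathcal{AT}}(T)$, organized into stages $m\ge d$ so that each pair is scheduled to a unique $m$ with $\max(A)\sse\max(r_m(T))$ and each stage receives only finitely many pairs. I will then build a fusion sequence $T=S_d\ge S_{d+1}\ge\cdots$ with $r_m(S_{m+1})=r_m(S_m)$: at stage $m$ I process the finitely many pairs scheduled to $m$ by successive applications of the hypothesis that $\mathcal{X}$ is CR$^*$, each application with the current tree in place of $T$ and $r_m$ of the current tree in place of $D$, shrinking the tree so that the desired dichotomy holds for that pair while preserving $r_m$. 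Setting $S=\bigcup_{m\ge d}r_m(S_m)$ will produce a member of $[D,T]$ for which $[B,S]^*\sse\mathcal{X}$ or $[B,S]^*\cap\mathcal{X}=\emptyset$ holds for every scheduled pair $(A,B)$.

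For the final step, given $R\in\mathcal{X}\cap[\emptyset,S]$, I take any $k\ge 1$, let $A=r_k(R)$, and set $B=A^+$, which is a Case~(a) pair. Since $R$ is a strong coding subtree of $S$, the strong-subtree branching property yields $R\in[B,S]^*$; this starred neighborhood meets $\mathcal{X}$, so the dichotomy forces $[B,S]^*\sse\mathcal{X}$, whence the basic metric open set $[A,\bS]$ satisfies $[A,\bS]\cap[\emptyset,S]\sse[B,S]^*\sse\mathcal{X}$, exhibiting a metric open neighborhood of $R$ inside $\mathcal{X}\cap[\emptyset,S]$. The main obstacle I anticipate is the bookkeeping of the fusion: ensuring that every relevant pair is scheduled to a stage where $\max(A)\sse\max(r_m(S_m))$, and that the iterated applications of CR$^*$ at a single stage continue to shrink the current tree below its $m$-th restriction while preserving $r_m$. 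This is precisely what CR$^*$ in Definition~\ref{defn.CR} was engineered to support, compensating for the failure of the amalgamation Axiom~{\bf A.3}(b) in $\mathcal{T}$.
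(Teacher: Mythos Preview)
Your fusion approach can be made to work, but it is far more elaborate than needed, and the scheduling as written has a gap. The paper's proof uses no fusion at all. The key point you are missing is that for the \emph{single fixed} $D=r_d(T)$ there are only \emph{finitely many} pairs $(A_j,B_j)_{j<\tilde{j}}$ satisfying Assumption~\ref{assumption.important} (since $\max(A_j)\sse\max(D)$ forces $A_j\sse D$, and $D$ is finite), and already
\[
\bigcup_{j<\tilde{j}}[B_j,T]^*=[\emptyset,T].
\]
Hence $\tilde{j}$ successive applications of CR$^*$, each producing $T_j\in[D,T_{j-1}]$ with $[B_j,T_j]^*$ decided, yield $S=T_{\tilde{j}-1}\in[D,T]$; then $\mathcal{X}\cap[\emptyset,S]=\bigcup_{j\in J}[B_j,S]^*$ for $J=\{j:[B_j,T_j]^*\sse\mathcal{X}\}$, a finite union of metrically open sets. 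The infinite fusion is deferred to Lemma~\ref{lem.GP9}, where it is genuinely required to handle countably many $\mathcal{X}_n$.

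Your scheme instead tries to decide all pairs at all levels. The gap: you schedule $(A,B)$ to stage $m$ by the condition $\max(A)\sse\max(r_m(T))$, but at stage $m$ you invoke CR$^*$ with $r_m$ of the \emph{current} tree $S_m$ as the $D$-parameter, and for $m>d$ one has $r_m(S_m)\ne r_m(T)$ in general, so Assumption~\ref{assumption.important} need not hold for the pair at hand. You acknowledge this obstacle but do not resolve it. It is repairable---schedule dynamically, at stage $m$ processing exactly the pairs with $\max(A)\sse\max(r_m(S_m))$---after which your final neighborhood argument (via the Case~(a) pair $A=r_k(R)$, $B=A^+$) is correct and pleasant. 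But once one sees the finite covering at the fixed level $D$, the entire fusion becomes unnecessary for this lemma.
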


\begin{proof}
Fix $T\in\mathcal{T}$ and  $D\in\mathcal{AT}(T)$.
Let $\lgl (A_j,B_j):j<\tilde{j}\rgl$ be an enumeration of the pairs $(A,B)$ satisfying Assumption
\ref{assumption.important} for   $T$ and $D$.
Notice that  $\bigcup_{j<\tilde{j}}[B_j,T]^*=[\emptyset,T]$.
Let $T_{-1}=T$.
For $j<\tilde{j}$, given $T_{j-1}$,
by the definition of CR$^*$  we may take  some $T_j\in [D,T_{j-1}]$ such that either $[B_j,T_j]^*\sse\mathcal{X}$ or else $\mathcal{X}\cap [B_j,T_j]^*=\emptyset$.

Let $S=T_{\tilde{j}-1}$.
Then $S\in [D,T]$, and
for each $j<\tilde{j}$,
$[B_j,S]^*\sse [B_j,T_j]^*$.
Notice that
$[\emptyset,S]=\bigcup_{j<\tilde{j}}[B_j,S]^*$.
  (If $B_j\not\sse S$, then $[B_j,S]^*=\emptyset$.)
Hence,
\begin{equation}
\mathcal{X}\cap [\emptyset,S]=\bigcup_{j<\tilde{j}} (\mathcal{X}\cap [B_j,S]^*).
\end{equation}
For $j<\tilde{j}$,
if $ [B_j,T_j]^*\sse \mathcal{X}$
then
 $\mathcal{X}\cap [B_j,S]^*= [B_j,S]^*$;
and if $\mathcal{X}\cap  [B_j,T_j]^*=\emptyset$
 then
  $\mathcal{X}\cap [B_j,S]^*=\emptyset$.
Thus,
\begin{equation}
\mathcal{X}\cap[\emptyset,S]=\bigcup_{j\in J}[B_j,S]^*,
\end{equation}
where $J=\{j< \tilde{j}: [B_j,T_j]^*\sse \mathcal{X}\}$.
As each $[B_j,S]^*$ is metrically open in the subspace $[\emptyset,S]$,
$\mathcal{X}\cap[\emptyset,S]$  is  also metrically open in the subspace $[0,S]$.
\end{proof}

\begin{lem}\label{lem.GP9}
Suppose $\mathcal{X}_n$, $n<\om$, are CR$^*$ sets.
Then for each $T\in\mathcal{T}$ and each $D\in\mathcal{AT}(T)$,
there is an $S\in [D,T]$ such that for each $n<\om$, $\mathcal{X}_n\cap[\emptyset,S]$ is metrically open in
$[\emptyset,S]$.
\end{lem}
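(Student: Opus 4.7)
The plan is to iterate Lemma \ref{lem.GP8} countably many times and take a fusion. Given CR$^*$ sets $\mathcal{X}_n$, $n<\om$, together with $T\in\mathcal{T}$ and $D\in\mathcal{AT}(T)$ with $D=r_d(T)$, I will inductively construct a sequence $T=S_{-1}\ge S_0\ge S_1\ge\cdots$ in $\mathcal{T}$ with $r_{d+n}(S_n)=r_{d+n}(S_{n-1})$ for each $n\ge 0$, and then show that the fusion $S=\bigcup_{n<\om} r_{d+n}(S_n)$ simultaneously witnesses metric openness for every $\mathcal{X}_n$.

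For the inductive step, given $S_{n-1}$, apply Lemma \ref{lem.GP8} to the CR$^*$ set $\mathcal{X}_n$, the tree $S_{n-1}$, and the finite subtree $r_{d+n}(S_{n-1})\in\mathcal{AT}(S_{n-1})$, to obtain $S_n\in[r_{d+n}(S_{n-1}),S_{n-1}]$ with $\mathcal{X}_n\cap[\emptyset,S_n]$ metrically open in $[\emptyset,S_n]$. The condition $S_n\in[r_{d+n}(S_{n-1}),S_{n-1}]$ secures both $r_{d+n}(S_n)=r_{d+n}(S_{n-1})$ and $S_n\le S_{n-1}$, so $\langle r_{d+n}(S_n):n<\om\rangle$ is $\sqsubseteq$-increasing in $\mathcal{AT}$. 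Its limit $S$ is a strong Rado coding tree in $[D,T]$, and $S\le S_n$ for every $n\ge 0$, since $r_{d+m}(S)=r_{d+m}(S_m)\sse S_m\le S_n$ for all $m\ge n$.

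The essential verification is that for each $n$, $\mathcal{X}_n\cap[\emptyset,S]$ is metrically open in $[\emptyset,S]$. Inspecting the conclusion of Lemma \ref{lem.GP8}, there is a finite index set $J_n$ together with $B_{n,j}\in\widehat{\mathcal{AT}}$, $j\in J_n$, such that
\begin{equation*}
\mathcal{X}_n\cap[\emptyset,S_n]=\bigcup_{j\in J_n}[B_{n,j},S_n]^*.
\end{equation*}
For any $T'\in\mathcal{X}_n\cap[\emptyset,S]$, using $S\le S_n$ one gets $T'\in\mathcal{X}_n\cap[\emptyset,S_n]$, so $T'\in[B_{n,j},S_n]^*$ for some $j\in J_n$; combined with $T'\le S$, this yields $T'\in[B_{n,j},S]^*$. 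Therefore $\mathcal{X}_n\cap[\emptyset,S]=\bigcup_{j\in J_n}[B_{n,j},S]^*$, a union of metrically open sets in the subspace $[\emptyset,S]$.

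The only real obstacle is the persistence of metric openness under passage from $S_n$ to the smaller tree $S\le S_n$. This is precisely why Lemma \ref{lem.GP8} is stated in the stronger form that guarantees an explicit decomposition into basic open sets $[B,\cdot]^*$, rather than merely asserting openness: the defining condition of $[B,\cdot]^*$ depends on the ambient tree only through the subtree relation, so such basic open sets transfer automatically to any $S\le S_n$. Once this persistence is in hand, the fusion itself is a routine diagonal construction.
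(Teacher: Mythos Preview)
Your proof is correct and follows essentially the same fusion argument as the paper. One minor point: Lemma~\ref{lem.GP8} as stated only asserts that $\mathcal{X}\cap[\emptyset,S]$ is metrically open in $[\emptyset,S]$, not the explicit $[B,\cdot]^*$ decomposition you extract from its proof; the paper instead writes $\mathcal{X}_n\cap[\emptyset,S_n]=\mathcal{O}_n\cap[\emptyset,S_n]$ for some ambient metrically open $\mathcal{O}_n\subseteq\mathcal{T}$ (definition of subspace topology), from which $\mathcal{X}_n\cap[\emptyset,S]=\mathcal{O}_n\cap[\emptyset,S]$ for any $S\le S_n$ is immediate --- so your remark that Lemma~\ref{lem.GP8} is ``stated in the stronger form'' is not accurate, and you need not look inside its proof.
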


\begin{proof}
Suppose $\mathcal{X}_n$, $n<\om$,  are  CR$^*$ sets.
Since  $\mathcal{X}_0$ is CR$^*$,  Lemma \ref{lem.GP8} implies that
there is an $S_0\in [r_d(T),T]$ such that $\mathcal{X}_0\cap [\emptyset,S_0]$ is metrically open in the subspace topology on
$[\emptyset,S_0]$.
Let $\mathcal{O}_0\sse\mathcal{T}$ be a metrically open set satisfying
$\mathcal{X}_0\cap [\emptyset,S_0]=\mathcal{O}_0\cap[\emptyset,S_0]$.
In general, given $i<\om$ and
$S_i\in [r_{d+i}(T),T]$,
by  Lemma \ref{lem.GP8} there is some $S_{i+1}\in
[r_{d+i+1}(S_i),S_i]$
and some
 metrically open
 $\mathcal{O}_i\sse\mathcal{T}$
such that
$\mathcal{X}_{i}\cap [\emptyset,S_{i}]=\mathcal{O}_i\cap[\emptyset,S_i]$.
Let $S=\bigcup_{i<\om}r_{d+i}(S_i)$.
Then $S$ is a member of $[r_d(T),T]$.
Letting $S_{-1}=T$,
note that
 $S\in [r_{d+i}(S_i),S_{i-1}]$ for each $i<\om$.
It follows that
$\mathcal{X}_{i}\cap [\emptyset,S]=\mathcal{O}_i\cap[\emptyset,S]$; hence $\mathcal{X}_{i}\cap [\emptyset,S]$ is metrically open in $[\emptyset,S]$.
\end{proof}

\begin{lem}\label{lem.ctblU}
Countable unions of CR$^*$ sets are CR$^*$.
\end{lem}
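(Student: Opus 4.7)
The plan is to combine Lemma \ref{lem.GP9} (which collapses countably many CR$^*$ sets to metrically open sets over a single $S_0$) with Lemma \ref{lem.Galvin} (which gives the complete Ramsey dichotomy for any open set, via its associated Nash--Williams family). The argument is a standard Galvin--Prikry style two-step reduction, adapted to use CR$^*$ in place of the usual Ellentuck amalgamation.

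Suppose $\mathcal{X}_n$, $n<\om$, are CR$^*$, and set $\mathcal{X}=\bigcup_{n<\om}\mathcal{X}_n$. Fix $T,A,B,D$ as in Assumption \ref{assumption.important}, with $d$ the integer such that $D=r_d(T)$. By Lemma \ref{lem.GP9}, first choose $S_0\in[D,T]$ such that for every $n<\om$, the set $\mathcal{X}_n\cap[\emptyset,S_0]$ is metrically open in $[\emptyset,S_0]$. Then $\mathcal{X}\cap[\emptyset,S_0]$ is a countable union of such sets, hence is itself metrically open in $[\emptyset,S_0]$. In particular, $\mathcal{X}\cap[B,S_0]^*$ is metrically open in $[B,S_0]^*$, so it is a union of basic open cones of the form $[F,S_0]$ with $F\in\mathcal{AT}(S_0)$ and $\max(F)\sqsupseteq\max(B)$. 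Take $\mathcal{F}$ to be the collection of $\sqsubseteq$-minimal such $F$; then $\mathcal{F}$ is a Nash--Williams family contained in $r[B,S_0]^*$ and satisfies
\begin{equation}
\mathcal{X}\cap [B,S_0]^*=\bigcup_{F\in\mathcal{F}}[F,S_0].
\end{equation}

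Now apply Lemma \ref{lem.Galvin} to the Nash--Williams family $\mathcal{F}\sse r[B,S_0]^*$ with the triple $(S_0,D,B)$ in place of $(T,D,B)$, obtaining $S\in[D,S_0]$ such that either $\mathcal{F}|S$ is a front on $[B,S]^*$ or $\mathcal{F}|S=\emptyset$. In the first case, every $R\in[B,S]^*$ extends some $F\in\mathcal{F}|S$, and since $F\in\mathcal{AT}(S_0)$ we have $R\in [F,S]\sse [F,S_0]\sse \mathcal{X}$, whence $[B,S]^*\sse\mathcal{X}$. In the second case, suppose for contradiction that $R\in[B,S]^*\cap\mathcal{X}$; since $[B,S]^*\sse[B,S_0]^*$, there is some $F\in\mathcal{F}$ with $R\in[F,S_0]$. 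But then $F$ is a subtree of $R$, and $R\le S$, so $F\in\mathcal{AT}(S)$; this puts $F\in\mathcal{F}|S$, contradicting $\mathcal{F}|S=\emptyset$. Hence $[B,S]^*\cap\mathcal{X}=\emptyset$. In either case, $\mathcal{X}$ satisfies the CR$^*$ dichotomy on $(T,A,B,D)$.

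The main thing to watch is that all three ingredients respect the same triple $(D,B,T)$: Lemma \ref{lem.GP9} is applied with $D$ so that $S_0\in[D,T]$, and then Lemma \ref{lem.Galvin} is applied again with the same $D$ to produce $S\in[D,S_0]\sse [D,T]$. Thus the resulting $S$ witnesses CR$^*$ for $\mathcal{X}$ at $(T,A,B,D)$. There is no further obstacle beyond verifying that $\mathcal{F}$ is genuinely a Nash--Williams subfamily of $r[B,S_0]^*$, which is immediate from taking $\sqsubseteq$-minimal $F$'s and the fact that each basic open cone making up the metrically open set $\mathcal{X}\cap[B,S_0]^*$ can be chosen to lie in $r[B,S_0]^*$.
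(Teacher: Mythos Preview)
Your proof is correct and follows essentially the same approach as the paper's: apply Lemma~\ref{lem.GP9} to get $S_0\in[D,T]$ on which $\mathcal{X}\cap[\emptyset,S_0]$ is metrically open, then use Lemma~\ref{lem.Galvin} relativized to $S_0$ to obtain the dichotomy. The only difference is presentational: you explicitly build the Nash--Williams family $\mathcal{F}\sse r[B,S_0]^*$ and invoke Lemma~\ref{lem.Galvin} in its stated form, whereas the paper appeals to the consequence ``metrically open sets are CR$^*$'' (the Remark after Lemma~\ref{lem.Galvin}) and then applies the definition of CR$^*$ to an ambient open set $\mathcal{O}\sse\mathcal{T}$ with $\mathcal{X}\cap[\emptyset,S]=\mathcal{O}\cap[\emptyset,S]$. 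Both routes require checking that the quadruple $(S_0,A,B,D)$ still satisfies Assumption~\ref{assumption.important}, which holds since $D=r_d(S_0)$ forces $A\sse D\sse S_0$.
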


\begin{proof}
Suppose $\mathcal{X}_n$, $n<\om$,  are  CR$^*$ subsets of $\mathcal{T}$, and
let $\mathcal{X}=\bigcup_{n<\om}\mathcal{X}_n$.
Let  $T,B,D,d,k$ be  as in Assumption \ref{assumption.important}.
We claim that  there is some $U\in [D,T]^*$ such that $[B,U]^*\sse\mathcal{X}$ or $[B,U]^*\cap\mathcal{X}=\emptyset$.

By Lemma \ref{lem.GP9}, there is
an $S\in [D,T]$ such that for each $n<\om$,
$\mathcal{X}_n\cap[\emptyset,S]$ is metrically open in   $[\emptyset,S]$.
Thus, $\mathcal{X}\cap[\emptyset,S]$ is metrically  open in $[\emptyset,S]$,
 so $\mathcal{X}\cap[\emptyset,S]=\mathcal{O}\cap [\emptyset,S]$ for some metrically open set $\mathcal{O}\sse \mathcal{T}$.
Relativizing to $[\emptyset,S]$,
Theorem  \ref{thm.GalvinNW}  implies that  $\mathcal{O}$ is
 CR$^*$ (in $[\emptyset,S]$).
Since $r_d(S)=D$,
by definition of CR$^*$  there is some $U\in [D,S]$ such that
either $[B,U]^*\sse \mathcal{O}$ or else
$[B,U]^*\cap\mathcal{O}=\emptyset$.
Therefore,
either
\begin{equation}
[B,U]^*  =[B,U]^*\cap[\emptyset,S]\sse \mathcal{O}\cap[\emptyset,S]=\mathcal{X}\cap[\emptyset,S],
\end{equation}
or else
\begin{align}
[B,U]^*\cap\mathcal{X}&=
[B,U]^*\cap[\emptyset,S]\cap\mathcal{X}\cr
&\sse [B,U]^*\cap[\emptyset,S]\cap\mathcal{O}\cr
&\sse [B,U]^*\cap\mathcal{O}
=\emptyset.
\end{align}
Thus, $\mathcal{X}$ is CR$^*$.
\end{proof}

\begin{thm}\label{thm.best}
The collection of CR$^*$ subsets of $\mathcal{T}$ contains all Borel subsets of $\mathcal{T}$.
In particular,
Borel subsets of the space $\mathcal{T}$ of strong Rado coding trees are completely Ramsey.
\end{thm}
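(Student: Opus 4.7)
The plan is to combine the three structural lemmas already proved in this section to conclude that the CR$^*$ subsets of $\mathcal{T}$ form a $\sigma$-algebra containing the metric topology, hence containing every Borel set. Specifically, Lemma \ref{lem.Galvin} shows that every metrically open subset of $\mathcal{T}$ is CR$^*$ (this is the technical heart, resting on Theorem \ref{thm.matrixHL} via the rank induction seeded by Lemma \ref{lem.rank<om+om}); Lemma \ref{lem.complements} gives closure under complementation; and Lemma \ref{lem.ctblU} gives closure under countable unions. Putting these together, the class of CR$^*$ subsets of $\mathcal{T}$ is a $\sigma$-algebra containing all basic metrically open sets $[B,\bS]^*$, and therefore contains the Borel $\sigma$-algebra generated by the metric topology on $\mathcal{T}$.

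For the ``in particular'' clause, the argument is that CR$^*$ implies CR. Given $C\in\mathcal{AT}_k$ with $k\ge 1$ and $T\in\mathcal{T}$, apply CR$^*$ to the quadruple $(T,A,B,D)$ of Case (a) of Assumption \ref{assumption.important} with $D=A=C$ and $B=A^+$. This yields some $S\in [D,T]=[C,T]$ such that either $[B,S]^*\sse\mathcal{X}$ or $[B,S]^*\cap\mathcal{X}=\emptyset$. One checks that $[A^+,S]^*=[C,S]$: if $S'\in[C,S]$ then $r_k(S')=C$, and since each of the $2^k$ nodes in $\max(A)$ splits in $S'$, the set $\max(r_{k+1}(S'))$ consists of $2^{k+1}$ end-extensions of $\max(A^+)$, so $S'\in[A^+,S]^*$; conversely, $\max(r_{k+1}(S'))\sqsupseteq\max(A^+)$ forces $r_k(S')=A=C$. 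The remaining case $C=\emptyset$ is treated by covering $[\emptyset,T]$ by the basic open pieces $[B_j,T]^*$ running over all pairs $(A_j,B_j)$ fitting Assumption \ref{assumption.important}, as in the proof of Lemma \ref{lem.GP8}, then fusing the CR$^*$ witnesses.

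I do not expect the main obstacle to lie in this final theorem itself: all the genuine difficulty has already been absorbed into Theorem \ref{thm.matrixHL} (which substitutes for the missing Axiom \textbf{A.3}(b)) and into the Galvin-Prikry-style Lemmas \ref{lem.Galvin} and \ref{lem.ctblU}. The only points that require even minor care are the verification that $[A^+,S]^*=[C,S]$ (a direct consequence of the coding-node splitting structure of sRc trees), and the handling of $C=\emptyset$, for which the $\sigma$-algebra argument applied inside $[\emptyset,S]$ together with Lemma \ref{lem.GP9} suffices. Thus the theorem is essentially a compilation of the work done in Sections \ref{sec.5} and earlier in Section \ref{sec.MainThm}.
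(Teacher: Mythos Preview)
Your proposal is correct and matches the paper's own proof, which is literally the single line ``This follows from Lemmas \ref{lem.Galvin}, \ref{lem.complements}, and \ref{lem.ctblU}.'' Your additional unpacking of the implication CR$^*\Rightarrow$CR (which the paper leaves tacit) is sound in spirit; the only small slip is that Assumption \ref{assumption.important} requires $D=r_d(T)$, so before setting $D=A=C$ you must first pass from $T$ to some $T'\in[C,T]$ so that $C=r_k(T')$.
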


\begin{proof}
This follows from Theorem \ref{thm.GalvinNW} and  Lemmas  \ref{lem.complements} and
\ref{lem.ctblU}.
\end{proof}

Theorem  \ref{thmmain}  is a special case of  the second half of Theorem \ref{thm.best}.

\begin{rem}\label{rem.Q}
We end this section with a remark about the infinite
 dimensional Ramsey theory  of  the rationals.
Extend now the lexicographic order on $\bS$ as follows:
For $s,t\in \bS$, define
$s<_{\mathrm{lex}} t$ exactly when
one of the following hold:
(a) $s$ and $t$ are incomparable  and
$s(|s\wedge t|)<t(|s\wedge t|)$;
 (b) $s\subset t$ and $t(|s|)=1$; or
 (c) $t\subset s$ and $s(|t|)=0$.
This defines a total order on $\bS$ so that $(\bS,<_{\mathrm{lex}})$ is order isomorphic to $(\mathbb{Q},<)$.
Using this isomorphism, say $\varphi:(\bS,<_{\mathrm{lex}})\ra (\mathbb{Q},<)$,
 Milliken's Theorem  \ref{thm.M} provides infinite dimensional Ramsey theorem  for the space of all subsets of $\mathbb{Q}$ which are $\varphi$-images of strong subtrees of $\bS$.
 This handles only one strong similarity type of subcopies of $\mathbb{Q}$, though.

It is useful to note that since the  coding nodes in the trees in $\mathcal{T}$ are dense,
they also represent  the rationals, using the  extended lexicographic order $<_{\mathrm{lex}}$  just defined.
Thus,  Theorem \ref{thm.best} provides an infinite dimensional Ramsey theorem for the rationals.
Similarly to the constraints provided by the work in \cite{Laflamme/Sauer/Vuksanovic06} for the Rado graph,
the work of Devlin \cite{DevlinThesis} on the big Ramsey degrees of the rationals provides constraints, again in terms of strong similarity types, for the infinite dimensional Ramsey theory of the rationals.
\end{rem}


\section{The Main Theorem}\label{sec.MT}

We now prove the Main Theorem.
Recall the homeomorphism  $\theta:\mathcal{R}(\bR)\ra\mathcal{T}_{\bR}$ defined at the end of Section
\ref{section.strongRadotrees}
by $\theta(\bR')=T_{\bR'}$,
for $\bR'\in\mathcal{R}(\bR)$.
Note that  given  a Borel subset
$\mathcal{X}\sse\mathcal{R}(\bR)$, the set
$\theta[\mathcal{X}]$ is a Borel subset of $\mathcal{T}_{\bR}$.
The subspace $\mathcal{R}(\bR)$ of the Baire space inherits the Ellentuck topology, refining the metric topology on $[\om]^{\om}$:
Given $\bR'\in\mathcal{R}(\bR)$ and $n\in \om$,
define $r_n(\bR')$ to be the subgraph of $\bR'$ induced on the first $n-1$ vertices of $\bR'$.
Let
\begin{equation}\label{eq.r_nRado}
\mathcal{AR}
=\{r_n(\bR'):\bR'\in\mathcal{R}(\bR)\mathrm{\ and\ } n<\om\}.
\end{equation}
For $\mathbb{F}\in \mathcal{AR}$ and $\bR'\in\mathcal{R}(\bR)$,
write $\mathbb{F} \sqsubset \bR'$ if and only if $\mathbb{F}=r_n(\bR')$ for some $n$.
Define
\begin{equation}
[\mathbb{F},\bR']=\{\bR''\in\mathcal{R}(\bR'): \mathbb{F}\sqsubset \bR''\}.
\end{equation}
We say that a set $\mathcal{X}\sse\mathcal{R}(\bR)$ is {\em completely Ramsey}
if for any $\mathbb{F}\in\mathcal{AR}$ and $\bR'\in\mathcal{R}(\bR)$,
there is some $\bR''\in [\mathbb{F},\bR']$ such that either
$[\mathbb{F},\bR'']\sse \mathcal{X}$
 or else
$[\mathbb{F},\bR'']\cap \mathcal{X}=\emptyset$.

\begin{mainthm}\label{thm.main}
Let $\mathbb{R}=(\om,E)$ be the Rado graph. Then every Borel subset  $\mathcal{X}\sse \mathcal{R}(\bR)$ is completely Ramsey.
In particular,
if $\mathcal{X}\sse\mathcal{R}(\mathbb{R})$ is Borel,
then for each $\mathbb{R}'\in\mathcal{R}(\mathbb{R})$,
there is a  Rado graph
 $\mathbb{R}''\in\mathcal{R}(\mathbb{R}')$  such that
$\mathcal{R}(\mathbb{R}'')$ is
either  contained in  $\mathcal{X}$,
or else is disjoint from $\mathcal{X}$.
\end{mainthm}

\begin{proof}
Let $\mathcal{X}$ be a Borel subset of
$\mathcal{R}(\bR)$, and suppose $\mathbb{F}\in\mathcal{AR}$ and $\bR'\in\mathcal{R}(\bR)$.
If $[\mathbb{F},\mathbb{R}']=\emptyset$ then
we are done,
so assume now that there is some $\bR''\in
[\mathbb{F},\mathbb{R}']$.
Then $\mathbb{F}$ is an initial segment of $\bR''$.
Let $n$ be the integer such that $\mathbb{F}=r_n(\bR'')$,
and let $A=r_n(T_{\bR''})$.
Since
$\mathbb{F}$ is an initial segment of $\bR''$, it follows that
 $[A,T_{\bR''}]$ is the $\theta$-image of
$[\mathbb{F},\mathbb{R}'']$.

Let $\mathcal{Y}=\theta[\mathcal{X}]$ and apply Theorem \ref{thm.best} to obtain an $S\in [A,T_{\bR''}]$ such that either $[A,S]\sse\mathcal{Y}$ or else
$[A,S]\cap\mathcal{Y}=\emptyset$.
Since  $\theta^{-1}$ is a homeomorphism from
$\mathcal{T}_{\bR}$ to $\mathcal{R}(\bR)$,
we have that
either $\theta^{-1}[A,S]\sse\mathcal{X}$ or else  $\theta^{-1}[A,S]\cap\mathcal{X}=\emptyset$.
Notice that
\begin{align}
\theta^{-1}[A,S] &=\{\theta^{-1}(S'):S'\in[ A,S]\}\cr
&=\{\mathbb{G}_{S'}:S'\in[ A,S]\}\cr
&=[\mathbb{G}_A,\mathbb{G}_S]\cr
&=[\mathbb{F}, \mathbb{G}_S].
\end{align}
Thus, we have found a $\mathbb{G}_S\in [\mathbb{F},\bR']$ such that
either
$[\mathbb{F}, \mathbb{G}_S]\sse\mathcal{X}$
or else
$[\mathbb{F}, \mathbb{G}_S]\cap\mathcal{X}=\emptyset$.
 The case where  $\mathbb{F}$ is the empty graph (no vertices) yields  the second half of the theorem.
\end{proof}


\section{Concluding remarks and further directions}\label{section.end}

In this paper, we have proved that Borel subsets of certain closed  spaces of Rado graphs are
completely
Ramsey.
This is a Rado graph analogue of the Galvin-Prikry Theorem \ref{thm.GP} for the Baire space.
As pointed out in Section \ref{section.strongtrees},
the work of Laflamme, Sauer, and Vuksanovic in \cite{Laflamme/Sauer/Vuksanovic06} necessitate that we restrict our spaces to collections of Rado graphs with the same strong similarity type.
We now point out three areas for improvement on the results of this paper.

Firstly, we would like to have an analogue of Ellentuck's Theorem; that is, we would like to further  obtain a result showing that all subsets with the property of Baire in the Ellentuck topology on $\mathcal{T}_{\bR}$ are completely Ramsey.
There is a serious breaking point when trying to adjust the methods of Ellentuck to the setting of strong Rado coding trees, which seems very closely tied with not  having an amalgamation property like the Baire space does,
this property  being made concrete in
the axiom \bf A.3\rm(b) of Todorcevic.
It does not seem possible to develop the `combinatorial forcing' method used by Nash-Williams, Galvin and Prikry, and finally Ellentuck without having this strong amalgamation property.
Thus, either new methods will be necessary, or it may be that the spaces $\mathcal{T}_{\bR}$ might have a subset with the property of Baire in the Ellentuck topology which is not Ramsey.
We leave this as an open problem.

\begin{question}
Is every subset of  $\mathcal{T}_{\bR}$ with the property of Baire with respect to   the Ellentuck topology Ramsey?
\end{question}

If the answer is no, then this would decide the following fundamental question in the positive.

\begin{question}
Is there a topological Ramsey space which does not satisfy Todorcevic's
 Axiom \bf A.3\rm(b)?
\end{question}

Secondly, we would like to extend  the Main Theorem to
apply  to any strong similarity type of a Rado graph.
The work in
\cite{Sauer06} and
\cite{Laflamme/Sauer/Vuksanovic06} on finding the big Ramsey degrees for the Rado graph
has the important feature that, at the end of the applications of Milliken's Theorem, they take a strongly diagonal antichain of nodes in $\bS$ which codes the Rado graph. It is within this antichain that they prove that the number of strong similarity types  representing a given finite graph is the upper bound for the big Ramsey degree (see \cite{Sauer06}) as well as the lower bound (see \cite{Laflamme/Sauer/Vuksanovic06}).
By virtue of how we defined $\mathbb{S}_{\bR}$, given a Rado graph $\bR=(\om,E)$,
one sees that the coding nodes in $\mathbb{S}_{\bR}$, and hence in any member of $\mathcal{T}_{\bR}$, are dense in the tree.
This aids in the proofs, especially with the forcing arguments.
However, we would prefer a theorem of the following sort:  Given a strongly diagonal antichain $A$ of nodes coding the Rado graph,
the space of all sub-antichains with the same strong similarity type form a space in which Borel sets are Ramsey.
This seems achievable and is the subject of ongoing work.

In tandem with this, we come  to the third area for improvement, which Todorcevic mentioned to the author at the 2019 Luminy Workshop in Set Theory:
namely, that the ``correct'' infinite dimensional Ramsey theorem should recover the big Ramsey degrees for the Rado graph.
The results in the present paper 
recover upper bounds for the big Ramsey degrees by
using initial segments of the  Rado coding trees as envelopes and applying Main Theorem.
However, this approach does  not recover the lower bounds proved 
by Laflamme, Sauer, and Vuksanovic in 
\cite{Laflamme/Sauer/Vuksanovic06}.
 So, this paper may be thought of as providing a reasonable  answer to  the stated question in \cite{Kechris/Pestov/Todorcevic05}, but 
 not recovering everything in 
 the  intended question.
 We hope that the work in this paper will pave the way.



\bibliographystyle{amsplain}
\bibliography{references}

\end{document}